\documentclass[reqno]{amsart}
\usepackage{amsthm}
\usepackage{amsmath}
\usepackage{amssymb}
\usepackage{amscd}
\usepackage{graphics}
\usepackage{latexsym}
\usepackage{stmaryrd}
\usepackage{empheq}
\usepackage[dvipsnames]{xcolor}
\usepackage[colorlinks=true,linkcolor=black,
 citecolor=orange,urlcolor=black]{hyperref}
\usepackage{enumitem}
\usepackage[style=trad-abbrv,maxnames=99,maxalphanames=9,isbn=false,
 giveninits=true,doi=false,url=true]{biblatex}
\renewbibmacro{in:}{}
\definecolor{titlecol}{named}{BrickRed}
\definecolor{headcol}{named}{Violet}
\definecolor{seccol}{named}{Red}
\definecolor{sseccol}{named}{Bittersweet}
\definecolor{pbcol}{named}{Black}
\definecolor{sncol}{named}{Brown}
\definecolor{acol1}{named}{Red}
\definecolor{acol2}{named}{Apricot}

\newcommand{\R}{\mathbb{R}}
\newcommand{\C}{\mathbb{C}}
\newcommand{\tr}{\operatorname{tr}}
\newcommand{\ddc}{dd^c}
\newcommand{\Psef}{\operatorname{Psef}}
\newcommand{\MN}{\operatorname{MN}}

\newcommand{\JNull}{\operatorname{Null}_{J}}
\newcommand{\JEnK}{E_{nJ}}

\newcommand{\cab}{c_{\alpha,\beta}}
\newcommand{\reg}{\mathrm{reg}}
\newcommand{\sing}{\mathrm{sing}}

\newcommand{\cI}{\mathcal I}
\newcommand{\cK}{\mathcal K}
\newcommand{\cO}{\mathcal O}

\theoremstyle{plain}
\newtheorem{thm}{Theorem}

\newtheorem{cor}[thm]{Corollary}

\newtheorem{lemma}[thm]{Lemma}

\newtheorem{theorem}[thm]{Theorem}
\newtheorem{proposition}[thm]{Proposition}
\newtheorem{corollary}[thm]{Corollary}

\theoremstyle{remark}
\newtheorem{remark}[thm]{Remark}

\title{The $J$-equation on K\"ahler manifolds under a smooth boundary cone condition}
\author{Junbang Liu}
\address{Department of Mathematics, The Hong Kong University of Science and Technology, Clear Water Bay, Kowloon, Hong Kong}
\email{junbangliu@ust.hk}
\begin{document}
\maketitle
\begin{abstract}
    For a $n$-dimensional compact K\"ahler manifold $X$ with a pair of K\"ahler classes $(\alpha,\beta)$, we show that the algebraically defined $J$-null locus is equal to the analytically defined $J$-non-ample locus under the assumption of $J$-bigness(which is automatic for semistable pair up to dimension $3$ \cite{3d}),  and boundary cone condition $\cab\omega^{n-1}-(n-1)\omega^{n-2}\wedge\chi\geq 0$ for some K\"ahler form $\omega\in \alpha,\chi\in \beta$. The key tool is the generalized Khovanskii-Teissier inequality associated to $J$ equation formulated by Collins \cite{CT21} and its extension to singular K\"ahler space. 
\end{abstract}
\tableofcontents

\section{Introduction}

This paper is a sequel to \cite{3d}, where we studied the semistable
$J$-equation on compact K\"ahler threefolds. Here we consider the
higher-dimensional case and focus on the new difficulties that arise
there. We refer to \cite{3d} for a more detailed discussion of the
background and related literature.

Let $X$ be a connected compact K\"ahler manifold of complex dimension
$n\geq4$, and fix two K\"ahler classes $\alpha,\beta$. Choose
$\omega\in\alpha$ and $\chi\in\beta$, and set $\cab:=n\beta\cdot\alpha^{n-1}/{\alpha^n}.$
The $J$-equation asks for
$\omega_\varphi=\omega+\ddc\varphi\in\alpha$ satisfying
$\tr_{\omega_\varphi}\chi=\cab$.
The $J$-equation arose in Donaldson's moment-map picture and in
Chen's study of the $J$-functional and the Mabuchi energy
\cite{D99,C00,C04}. Its smooth theory is now clear by the
subsolution condition (or cone condition) and the corresponding
intersection inequalities
\cite{SW08,LS15,CS17,C21,DP21,S20}; related fully nonlinear equations
with differential-form data were studied by Fang--Ma \cite{FM24}.

Under the boundary cone condition, smooth solvability may fail, and
one is naturally led to singular metrics and weak limits of the
$J$-flow. This picture has
been developed on K\"ahler surfaces
\cite{FLSW14,M26a} and, under additional analytic
hypotheses, in higher dimensions \cite{S24}. Weak convergence of the
$J$-flow in higher dimensions was recently established in
\cite{M26b}. The minimal-slope and
wall--chamber theories \cite{DMS26,KSD26,F26} further suggest that the
loss of strict positivity should be reflected by a distinguished
degeneracy set. Thus the basic questions are not only whether a weak
boundary solution exists and whether the $J$-flow converges weakly,
but also which subvarieties are forced to carry singularities and
whether higher regularity holds away from those subvarieties.

For an irreducible proper subvariety $V\subset X$ of dimension
$p>0$, write
\[
 J_{\cab}(V,\alpha,\beta)
 :=\int_V\bigl(\cab\alpha^p-p\beta\alpha^{p-1}\bigr).
\]
We call $(\alpha,\beta)$ $J$-nef if
$J_{\cab}(V,\alpha,\beta)\geq0$ for every such $V$.
Define the $J$-null locus by
\[
 \JNull(\alpha,\beta)
 :=\bigcup_{\substack{V\subsetneq X\ \text{irreducible}\\
 \dim V>0,\,
J_{\cab}(V,\alpha,\beta)=0}}V.
\]
There is a useful model for this question in the theory of positive
$(1,1)$-classes. For a nef and big class, Collins--Tosatti proved
that its numerical null locus agrees with its non-K\"ahler locus
\cite{CT15}. The $J$-equation suggests an analogous comparison:
the vanishing of $J_{\cab}(V,\alpha,\beta)$ defines a numerical
degeneracy set, whereas weak $J$-cone currents define an analytic
one through their positive Lelong numbers. One would like to know
whether the former is analytic, whether every weak cone current is
singular along it, and whether a single current realizes exactly this
singular set. In dimension three these questions have an affirmative
answer under $J$-nefness alone \cite{3d}. In higher
dimensions, however, null subvarieties may occur in several
codimensions, and their classes no longer provide a single
intersection matrix or a common perturbation direction. The purpose
of the present paper is to prove a conditional higher-dimensional
analogue which isolates the divisorial part of this problem.

For a positive Hermitian form $A$ and a semipositive Hermitian form
$\Theta$, let
\[
 P_\Theta(A)(x)
 :=\max_{\substack{H\subset T_x^{1,0}X\\
                   \dim_{\mathbb C}H=n-1}}
   \tr_{A|H}(\Theta|H).
\]
Thus $P_\chi(A)<\cab$ is equivalent to the strict cone condition $\cab A^{n-1}-(n-1)A^{n-2}\wedge\chi>0$. 
We use Chen's weak interpretation of this inequality for positive
currents. Let $T$ be a closed positive $(1,1)$-current. On a
coordinate chart write $T=\ddc\varphi$, fix $O'\Subset O$, and let
$\chi_0\leq\chi$ be a
constant-coefficient K\"ahler form on $O$. We say that
$P_\chi(T)\leq c$ if, for every sufficiently small $\delta>0$,
\[
 P_{\chi_0}(\ddc\varphi_\delta)\leq c\quad\text{on }O',
\]
where $\varphi_\delta=\varphi*\rho_\delta$ is the standard local
convolution regularization.
This condition is independent of the chosen local potential. Notice
that it forces $T$ to be a K\"ahler current: the matrix inequality
$P_{\chi_0}(\ddc\varphi_\delta)\leq c$ implies
$\ddc\varphi_\delta\geq c^{-1}\chi_0$, and one then lets
$\delta\downarrow0$, shrinks the chart, and takes $\chi_0$ arbitrarily
close to $\chi$ at the point under consideration.

Let
$\mathcal K_J^{\mathrm{an}}(\alpha,\beta)$ be the set of currents $T$ that can be written as $T=S+\varepsilon \chi$ for some closed positive current $S$ with analytic singularities, some K\"ahler form $\chi\in \beta$, and some $\varepsilon>0$ such that 
\[\quad [S]=\alpha-\varepsilon\beta,
 \quad P_\chi(S)\leq\cab.
\] Recall that analytic singularities mean a
local potential is of the form
\[
a\log\left(\sum_{\ell}|f_\ell|^2\right)+g,
 \qquad a>0,\quad f_\ell \text{ holomorphic, } g\in C^\infty.
    \]  We call
$(\alpha,\beta)$ \textbf{$J$-big} if this set is nonempty, and define
\[
E_{nJ}(\alpha,\beta)
 :=\bigcap_{T\in\mathcal K_J^{\mathrm{an}}(\alpha,\beta)}E_+(T),
 \qquad E_+(T):=\{x\in X:\nu(T,x)>0\}.
\]
We call $\JEnK(\alpha,\beta)$ the $J$-non-ample locus of the pair $(\alpha,\beta)$.
Our main result gives a partial answer to the preceding questions
in higher dimensions.

\begin{thm}\label{thm:main}
Assume that $n\geq4$,  $(\alpha,\beta)$ is $J$-big and there exists K\"ahler forms $\omega\in \alpha,\chi\in \beta$ satisfing
the boundary condition
\begin{equation}\label{eq:intro-boundary}
\cab\omega^{n-1}-(n-1)\omega^{n-2}\wedge\chi\geq 0.
\end{equation}
Then,
\begin{enumerate}
\item there are finitely many prime
divisors $D_1,\ldots,D_N$ such that
$\JNull(\alpha,\beta)=D_1\cup\cdots\cup D_N;$
 \item $D_1\cup\cdots\cup D_N\subset E_+(T)$ for every
 $T\in\mathcal K_J^{\mathrm{an}}(\alpha,\beta)$;
 \item there are $\varepsilon>0$ and
 $T_0=S+\varepsilon\chi\in
 \mathcal K_J^{\mathrm{an}}(\alpha,\beta)$ such that
 \[
  E_+(T_0)=D_1\cup\cdots\cup D_N.
 \]
\end{enumerate}
Consequently,
\[
 \JEnK(\alpha,\beta)=\JNull(\alpha,\beta).
\]
\end{thm}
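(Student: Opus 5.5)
Recall that the boundary condition \eqref{eq:intro-boundary} says $P_\chi(\omega)\leq\cab$ pointwise, with equality permitted. The plan is to use this pointwise inequality to force every $J$-null subvariety to be divisorial, and then to repeat the surface/threefold strategy of \cite{3d} for divisors only.

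\emph{Step 1: null subvarieties lie in divisors.} Let $V$ be irreducible of dimension $p$ with $0<p<n$. Apply the generalized Khovanskii--Teissier inequality of \cite{CT21} on $V$, extended to singular K\"ahler spaces via a resolution $\mu:\tilde V\to V$. The inequality $\cab\omega^{n-1}-(n-1)\omega^{n-2}\wedge\chi\geq0$ restricts to any linear subspace $W\subset T_xX$ of dimension $p$ and gives, pointwise,
\[
\cab\,\omega|_W^{p}-p\,\omega|_W^{p-1}\wedge\chi|_W\;\geq\;\frac{n-p}{\cab}\,(\text{a positive multiple of the trace defect})\,\omega|_W^p .
\]
I would check this by simultaneous diagonalization: with eigenvalues $\lambda_i$ of $\chi$ relative to $\omega$, we have $\sum_{i\neq j}\lambda_i\leq\cab$ for all $j$. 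For $p\leq n-2$ this yields $\sum_{i\in I}\lambda_i<\cab$ strictly for $|I|=p$, since $\lambda_i>0$. Hence $J_{\cab}(V)>0$ whenever $p\leq n-2$. Equality can therefore occur only for $p=n-1$, and only on hypersurfaces $D$ along which the tangent space is pointwise a maximizing hyperplane with $\tr_{\omega|_{T_D}}\chi=\cab$ identically on $D_{\reg}$. The set $Z$ of points where $P_\chi(\omega)=\cab$ is a real-analytic (closed) set. An irreducible divisor $D$ with $J=0$ must be contained in $Z$, and I expect $Z$ to contain only finitely many such divisors. To see this, note that distinct null divisors $D_i$ have classes $[D_i]$ that are linearly independent: negativity of their intersection matrix against $\alpha^{n-2}$-type forms follows from $J$-bigness, as in the Zariski-lemma argument of \cite{3d}. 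This gives $N\leq h^{1,1}$ and proves (1).

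\emph{Step 2: null divisors lie in $E_+(T)$.} Take $T=S+\varepsilon\chi\in\mathcal K_J^{\mathrm{an}}$ and suppose $\nu(T,D)=0$ generically on $D$. Since $S$ has analytic singularities, $S|_D$ is well defined with $[S|_D]=(\alpha-\varepsilon\beta)|_D$. Using $P_\chi(S)\leq\cab$, the trace of $\chi|_D$ with respect to $S|_D$ is at most $\cab$, and by the Khovanskii--Teissier/Chen inequality on $D$ (via resolution), $\int_D\cab S^{n-1}-(n-1)S^{n-2}\chi\geq0$. Expanding $S=T-\varepsilon\chi$ together with the extra $\varepsilon\chi$ slack forces $J_{\cab}(D,\alpha,\beta)>0$, a contradiction. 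This restriction argument for singular currents with analytic singularities is routine but needs a regularization in the spirit of Chen's definition.

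\emph{Step 3: construction of $T_0$ (main obstacle).} Start from some $T_1\in\mathcal K_J^{\mathrm{an}}$ and from $\omega$, and form $\omega_t=\omega-t\sum a_iD_i$-type perturbations. Concretely, take $S_0=\omega-\varepsilon\chi+\delta\bigl(\sum a_i[D_i]+\ddc\log\text{(smooth hermitian metric terms)}\bigr)$, where the $a_i>0$ are chosen using the negativity from Step 1 so that the correction $-\sum a_i\Theta_{h_i}$ strictly decreases $P_\chi$ near each $D_i$, where $\omega$ was only on the boundary. Away from $\bigcup D_i$, the set $Z$ is a compact set missing null divisors, and there I expect to use Demailly-type gluing (regularized max) with $T_1$ to get strict inequality. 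At this point I would imitate the Collins--Tosatti mass-concentration argument. Solve a degenerate $J$-equation on the blow-up along $Z\setminus\bigcup D_i$, concentrate mass, and use Step 1's strict positivity for $p\leq n-2$ to obtain currents smooth off $\bigcup D_i$. Finally, take a convex combination with $T_1$ so that the analytic singular set is exactly $\bigcup D_i$. Together with (2), this gives $\JEnK=\JNull$.
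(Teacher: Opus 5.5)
Your reduction to divisors in Step 1 is correct. It is the pointwise eigenvalue computation of Lemma~\ref{lem:only-divisors}, and no Khovanskii--Teissier inequality is needed for it. Also, $\{P_\chi(\omega)=\cab\}$ is only a closed set, not a real-analytic one.

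\textbf{Gap in Step 1.} The genuine gap is your claim that the intersection matrix of the null divisors is negative ``as in the Zariski-lemma argument of \cite{3d}''. For $n\geq4$ this is exactly the new difficulty. In \cite{3d} negativity rests on the Hodge index theorem on surfaces, which is unavailable on null divisors of dimension $n-1\geq3$, and $J$-bigness alone does not produce it. The relevant weight is also $q_2=(\cab\alpha-(n-2)\beta)\alpha^{n-3}$, not $\alpha^{n-2}$.

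The paper separates the two issues. Finiteness is the cheap part. A log resolution of a current in $\mathcal K_J^{\mathrm{an}}(\alpha,\beta)$ (Lemma~\ref{lem:birational-certificate}) gives Proposition~\ref{prop:radical-obstruction}: no nonzero $\gamma\in\MN(X)$ has $z\cdot\gamma=0$. Boucksom's divisorial Zariski decomposition and exceptional families then bound the number of null divisors.

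Negative definiteness of $M_{ij}=q_2\cdot D_i\cdot D_j$ needs a chain you do not have:
\begin{enumerate}
\item semidefiniteness, and the cubic sign $q_3\cdot E^3\geq0$ for a positive kernel vector, from the slopes of the $J$-functional along $\log(e^{-R}+|s_F|_h^2)$;
\item the $J$-Hodge inequality on a resolution of the normalization of each possibly singular $D_i$, giving $q_3\cdot E^2\cdot D_i\leq0$ with equality only if $E|_{D_i}=0$; keeping the equality case in the limit requires $t$-uniform $C^0$ and weighted $C^2$ estimates for degenerating $J$-equations;
\item then $E^2=0$, nefness of $E$ via Demailly--P\u{a}un, and a contradiction with Proposition~\ref{prop:radical-obstruction}.
\end{enumerate}

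\textbf{Step 3 fails as written, and this is where negativity is actually used.} The form $\omega-\delta\sum a_i\Theta_{h_i}$ differs from $\omega$ on all of $X$. Moreover, $\Theta_{h_i}$ has no pointwise sign near $D_i$; the negativity of $D_i|_{D_i}$ is only numerical, against $q_2$. So nothing forces $P_\chi$ to decrease. The boundary set $Z$ may be large and non-analytic, so it cannot be blown up. A mass-concentration limit would also not have analytic singularities with Lelong locus exactly $\bigcup D_i$.

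The missing idea is to argue in cohomology:
\begin{enumerate}
\item Negative definiteness gives $E_0=\sum e_iD_i$ with $q_2\cdot E_0\cdot D_j<0$ for all $j$.
\item Compactness of $\{\xi\in\Psef(X):\alpha^{n-1}\cdot\xi=1\}$, together with the description of its $z$-null face (Lemma~\ref{lem:finite-face}), shows that $\alpha-s\{E_0\}-\varepsilon\beta$ satisfies strict Nakai-type inequalities on every proper subvariety, plus a top-degree inequality.
\item A Nakai-type existence theorem (Fang--Ma \cite{FM24}) then yields a K\"ahler form $A$ in that class with $P_\chi(A)<\cab$.
\item Then $S=A+s\sum_ie_i[D_i]$ and $T_0=S+\varepsilon\chi$ work.
\end{enumerate}

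\textbf{Step 2 also has a hole.} For singular $S$, the quantity $\int_D(\cab S^{n-1}-(n-1)S^{n-2}\wedge\chi)$ is not $J_{\cab}(D,\alpha,\beta)$. Both products lose mass at the poles, and their difference has no sign. You must first glue $T$, via a regularized maximum of potentials, to a smooth form satisfying the cone condition everywhere. The paper uses $\Omega_t\in(1+t)\alpha$; under \eqref{eq:intro-boundary}, $\omega$ itself suffices. The integrand is then pointwise nonnegative on $D$ and strictly positive on an open piece of $D$ where the glued form equals $T$.
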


The theorem leaves two natural questions. First, can the smooth
boundary condition $P_\chi(\omega)\leq \cab$ be replaced by
$J$-nefness alone? Without this condition, lower-dimensional
inequalities may vanish and the $J$-null locus may contain
higher-codimensional components; one must then create singularities
along these components while preserving the nonlinear cone condition
under birational descent. Second, does numerical $J$-nefness imply
$J$-bigness automatically? This is not a formal consequence of
the existence of a weak boundary current, since a regularization by
currents with analytic singularities preserving the cone
inequality is not known so far. Positive answers to both questions would give the full
higher-dimensional counterpart of the threefold result. 

The two assumptions in Theorem~\ref{thm:main} address different parts
of the guiding problem. The smooth boundary condition
\eqref{eq:intro-boundary} makes all the numerical inequalities in
dimensions at most $n-2$ strict; hence the $J$-null locus contains
only divisors. The $J$-bigness assumption provides, after a
modification, a K\"ahler form satisfying the strict cone condition
together with an effective exceptional divisor. Combined with
\eqref{eq:intro-boundary}, this gives
\[
 \bigl\{\gamma\in\MN(X):
  \bigl(\cab\alpha^{n-1}-(n-1)\beta\alpha^{n-2}\bigr)
  \cdot\gamma=0\bigr\}=\{0\}.
\]
Indeed, in the proof of the main theorem, such strict positivity on the modified nef class is enough to replace the $J$-bigness assumption.
Boucksom's divisorial Zariski decomposition and the theory of
exceptional families \cite{B04} then imply that there are only
finitely many null prime divisors.

From the proof the main theorem, although we define the $J$-bigness analytically, there is also a numerical characterization:
\begin{proposition}
Assume that $P_\chi(\omega)\leq\cab$ for K\"ahler forms
$\omega\in\alpha$ and $\chi\in\beta$. Then the following are
equivalent:
\begin{enumerate}[label=\textup{(\arabic*)}]
 \item $(\alpha,\beta)$ is $J$-big;
 \item for every $0\neq\xi\in\MN(X)$,
 \[
  \bigl(\cab\alpha^{n-1}-(n-1)\beta\alpha^{n-2}\bigr)\cdot\xi>0.
 \]
\end{enumerate}
\end{proposition}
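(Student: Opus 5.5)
We prove the two implications separately. Throughout, $\MN(X)$ denotes the cone of modified nef classes in the sense of Boucksom \cite{B04}. We write
\[
\Theta:=\cab\alpha^{n-1}-(n-1)\beta\alpha^{n-2}\in H^{n-1,n-1}(X,\R).
\]
The hypothesis $P_\chi(\omega)\leq\cab$ says that the smooth form $\cab\omega^{n-1}-(n-1)\omega^{n-2}\wedge\chi$ is a semipositive representative of $\Theta$. Hence $\Theta\cdot\xi\geq0$ for every pseudoeffective $\xi$, in particular for every $\xi\in\MN(X)$. The content of the proposition is strictness.

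\emph{(1)$\Rightarrow$(2).} Take $T=S+\varepsilon\chi\in\mathcal K_J^{\mathrm{an}}(\alpha,\beta)$. Let $\mu:\tilde X\to X$ be a log resolution of the ideal defining the analytic singularities of $S$, so that $\mu^*S=\theta+[F]$ with $\theta$ smooth semipositive and $F$ an effective $\mu$-exceptional-plus-fixed divisor. Then $\mu^*T=\tilde\omega+[F]$ with $\tilde\omega=\theta+\varepsilon\mu^*\chi$ smooth and $\tilde\omega\geq\varepsilon\mu^*\chi$. On the locus where $\mu$ is an isomorphism and off $F$, the cone condition for $T$ gives $P_{\mu^*\chi}(\tilde\omega)\leq\cab$. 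By continuity, the form $\cab\tilde\omega^{n-1}-(n-1)\tilde\omega^{n-2}\wedge\mu^*\chi$ is then semipositive on all of $\tilde X$. Moreover, $\tilde\omega$ differs from $\mu^*T$ only by $[F]$, and $\theta$ in fact satisfies the strict version with $\cab$ replaced by something smaller because of the extra $\varepsilon\chi$: indeed $P_\chi(S)\le\cab$ implies $P_\chi(S+\varepsilon\chi)<\cab$ quantitatively.

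Now let $0\neq\xi\in\MN(X)$. Since $\mu_*\mu^*\xi=\xi$ and $\xi$ is modified nef, $\mu^*\xi$ is pseudoeffective; the standard fact is that the negative part of $\mu^*\xi$ is exceptional. Expand
\[
\Theta\cdot\xi=\mu^*\Theta\cdot\mu^*\xi,\qquad \mu^*\alpha=[\tilde\omega]+\varepsilon\mu^*\beta+[F]-\varepsilon\mu^*\beta .
\]
Writing $\mu^*\alpha=\{\tilde\omega\}+\{F\}$, we expand $\Theta$ as a polynomial. The terms with no $F$ give the strictly positive form $\cab\tilde\omega^{n-1}-(n-1)\tilde\omega^{n-2}\wedge\mu^*\chi$ paired with $\mu^*\xi\neq0$, which is positive because a strictly positive $(n-1,n-1)$-form pairs positively with any nonzero pseudoeffective class. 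The terms containing $F$ must be shown to be $\geq0$ against $\mu^*\xi$. For this we interpolate: $\alpha_t=\{\tilde\omega\}+t\{F\}$, and we use the generalized Khovanskii--Teissier inequality of Collins \cite{CT21}, extended to the singular setting (components of $F$), to show that $t\mapsto\Theta_t\cdot\mu^*\xi$ is nondecreasing. We expect this monotonicity to be the main obstacle. The derivative involves $\{F\}\cdot(\text{mixed }(n-2,n-2)\text{ form})\cdot\mu^*\xi$, and positivity needs both that $\xi$ is modified nef (so it does not charge components of $F$ negatively) and a restricted cone inequality on each component of $F$. The restricted cone inequality comes from the boundary hypothesis $P_\chi(\omega)\le\cab$, restricted to subvarieties of dimension $n-1$.

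\emph{(2)$\Rightarrow$(1).} The set $\{\xi\in\MN(X):\int_X\xi\wedge\omega^{n-1}=1\}$ is compact. Hence (2) gives a uniform $\delta>0$ with $\Theta\cdot\xi\geq\delta\,\xi\cdot\alpha^{n-1}$ on $\MN(X)$. By continuity this persists for $\alpha$ replaced by $\alpha-\varepsilon\beta$ and $\cab$ fixed, for small $\varepsilon>0$. We then apply the duality between $\MN(X)$ and the cone generated by pseudoeffective $(n-1,n-1)$-classes plus exceptional divisors (Boucksom's divisorial Zariski decomposition \cite{B04}, combined with the Demailly--P\u{a}un/Collins--Tosatti-type characterization used in the main theorem). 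This gives a decomposition $\Theta_\varepsilon=(\text{positive})+$ contributions that can be realized by a current $S\in\alpha-\varepsilon\beta$ with analytic singularities along exceptional divisors. Concretely, we mimic the proof of part (3) of Theorem~\ref{thm:main}, which only uses this strict positivity on modified nef classes. Following the remark in the introduction, we construct on a modification a K\"ahler form satisfying the strict cone condition together with an effective exceptional divisor, via the Collins--Sz\'ekelyhidi/Song criterion \cite{CS17,S20} on the modification, and push forward. The pushforward $S$ then has analytic singularities and satisfies $P_\chi(S)\le\cab$, so $(\alpha,\beta)$ is $J$-big.
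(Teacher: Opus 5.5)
In (1)$\Rightarrow$(2) the decisive step, monotonicity of $t\mapsto\Theta_t\cdot\mu^*\xi$ along $\{\widetilde\omega\}+t\{F\}$, is left open. The tool you name for it (a Khovanskii--Teissier/$J$-Hodge inequality on the components of $F$) is not what is needed, and two ingredients are missing.

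\textbf{Missing ingredient 1.} For every $t\in[0,1]$ you need a smooth semipositive representative of $\cab\alpha_t^{n-2}-(n-2)\mu^*\beta\,\alpha_t^{n-3}$. This comes from the forms $A_t:=(1-t)\widetilde\omega+t\mu^*\omega$, which represent $\{\widetilde\omega\}+t\{F\}$. Convexity and order reversal of $P_{\mu^*\chi}$ then apply, using $P_{\mu^*\chi}(\widetilde\omega)<\cab$ off the exceptional locus and $P_\chi(\omega)\leq\cab$.

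\textbf{Missing ingredient 2.} Your claim that modified nefness of $\xi$ keeps $\{F\}\cdot\mu^*\xi$ pseudo-effective is false. In general $\mu^*\xi$ is not modified nef: its divisorial negative part is $\mu$-exceptional but may be nonzero (e.g.\ when $\mu$ blows up a center along which $\xi$ has positive minimal multiplicity). Then $\{F_j\}\cdot\mu^*\xi$ need not be pseudo-effective. As in Proposition~\ref{prop:radical-obstruction}, you must pair with $\widetilde\xi:=Z(\mu^*\xi)$. This class still satisfies $\mu_*\widetilde\xi=\xi$, and each $\{F_j\}\cdot\widetilde\xi$ is pseudo-effective by \cite[Proposition~2.4]{B04}.

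With these repairs your direct route does work. Set $f(t):=\bigl(\cab[A_t]^{n-1}-(n-1)\mu^*\beta\,[A_t]^{n-2}\bigr)\cdot\widetilde\xi$. Then $f'\geq0$, hence $\Theta\cdot\xi=f(1)\geq f(0)>0$. Here $f(0)>0$ because $\widetilde\omega$ is positive definite with a strict cone inequality off the exceptional locus, and a positive current in $\widetilde\xi$ must have mass there (otherwise $\xi=\mu_*\widetilde\xi=0$). This is a direct variant of the paper's argument. The paper instead argues by contradiction along $(1-t)\mu^*\Omega_\sigma+tA$, with $A$ from Lemma~\ref{lem:birational-certificate} and $\Omega_\sigma$ from Lemma~\ref{lem:strict-approximation}; under the boundary hypothesis, $\mu^*\omega$ can replace $\mu^*\Omega_\sigma$.

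In (2)$\Rightarrow$(1) there is no actual argument. Positivity of $\Theta$ on $\MN(X)$ is a linear cohomological condition, whereas $J$-bigness asks for a current satisfying the nonlinear pointwise condition $P_\chi(S)\leq\cab$. No duality statement of the kind you invoke converts one into the other, and you do not formulate one.

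Your pointer to the proof of Theorem~\ref{thm:main}(3) is the right idea, and it is what the paper does. The content, however, lies in the following checks:
\begin{itemize}
\item $J$-bigness enters that proof only through Proposition~\ref{prop:radical-obstruction}, in two places: Lemma~\ref{lem:finite-face} (face description and finiteness of the null divisors via Boucksom's exceptional families), and the last step of Proposition~\ref{prop:M-negative} (a kernel of $M$ yields a nonzero nef class $E$ with $\Theta\cdot E=0$).
\item (2) supplies both of these directly.
\item Lemma~\ref{lem:matrix-signs}, Proposition~\ref{prop:strict-hodge}, Corollary~\ref{cor:inward}, Lemma~\ref{lem:wall-perturb} and Lemma~\ref{lem:FangMa} need only $J$-nefness and the boundary condition.
\end{itemize}

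The concrete mechanism you then describe is different and does not work.
\begin{itemize}
\item The remark in the introduction about a K\"ahler form plus an effective exceptional divisor on a modification is Lemma~\ref{lem:birational-certificate}, a \emph{consequence} of $J$-bigness. Building on it here is circular. The actual construction happens on $X$, with $A\in\alpha-s\{E_0\}-\varepsilon\beta$ and singular part $s\sum_ie_i[D_i]$ along the null divisors.
\item The criterion of \cite{CS17,S20} applied to such a shifted class gives the cone inequality only relative to that class's own slope, which need not be $\leq\cab$. For $\alpha-\varepsilon\beta$ it is strictly larger for small $\varepsilon>0$, since $\beta\cdot\Theta>0$. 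This is why the paper uses Fang--Ma's twisted equation, fed by Lemma~\ref{lem:wall-perturb}.
\item Pushing a form forward from a modification gives no control over analytic singularities, nor over the weak cone condition along the image of the exceptional locus.
\end{itemize}
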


And as a corollary of the main theorem, one can also show the convergence of the $J$-flow under the boundary cone condition and $J$-bigness. The proof is identically the same as the one in \cite{3d} 
\begin{cor}\label{cor:convergence-J-flow}Under the assumption of theorem \ref{thm:main}, let $\varphi(t)$ solve the $J$-flow $\partial_t\varphi=\cab-\tr_{\omega_\varphi}\chi$ with initial value $\varphi(0)=0$. Set \[\overline{\varphi}(t):=\frac{\int_X\varphi(t)\chi^3}{\int_X\chi^3}.
\]Then there is a $\omega$-plurisubharmonic function $\varphi_\infty$, smooth on $X\setminus \JNull(\alpha,\beta)$, such that \[
\varphi(t)-\overline{\varphi}(t)\to \varphi_{\infty}\qquad \text{ in }C^\infty_{\rm loc}(X\setminus \JNull(\alpha,\beta)) \quad \text{ as }t\to \infty
.\]
Moreover, $\omega_{\varphi(t)}$ converges in the sense of currents to $\omega+\ddc\varphi_\infty.$
    
\end{cor}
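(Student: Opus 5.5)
The corollary follows from Theorem~\ref{thm:main} in the same way as in the threefold paper \cite{3d}. Theorem~\ref{thm:main} supplies the current needed for a local estimate away from the null locus, and the known weak theory of the $J$-flow supplies compactness. The steps are as follows.

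\textbf{Step 1: the barrier.} By Theorem~\ref{thm:main}(3) there is a current $T_0=S+\varepsilon\chi\in\mathcal K_J^{\mathrm{an}}(\alpha,\beta)$ with $E_+(T_0)=\JNull(\alpha,\beta)$. We write $T_0=\omega+\ddc\psi$, where $\psi$ has analytic singularities exactly along $D_1\cup\cdots\cup D_N$ and is smooth elsewhere. Smoothness elsewhere holds because analytic singularities with vanishing Lelong number are bounded, and we may arrange the representative to be smooth off its singular set. The current $T_0$ satisfies the cone condition $P_\chi(S)\leq\cab$ with the slack $\varepsilon\chi$. This strictness is what drives the estimates: it gives $P_\chi(T_0)<\cab-\delta$ on compact subsets of $X\setminus\JNull$. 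Indeed, adding $\varepsilon\chi$ strictly decreases $\tr_{A|H}\chi$.

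\textbf{Step 2: uniform local estimates.} Following Song--Weinkove and the threefold argument, we use $\psi$ as a subsolution in the parabolic maximum principle. We apply it to quantities of the form $\log\tr_\chi\omega_{\varphi}-A(\varphi-\psi)$. As $\psi\to-\infty$ along the null locus, the maximum is attained in $X\setminus\JNull$. The strict subsolution inequality then gives
\[
\tr_\chi\omega_{\varphi(t)}\leq Ce^{A(\varphi(t)-\overline{\varphi}(t)-\psi)}.
\]
Combining this with a uniform upper bound $\varphi-\overline\varphi\leq C$ yields $C^2$ bounds on compact subsets of the complement. That upper bound comes from the mean-value/Green's function bound for $\omega$-psh functions. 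We also need a lower bound $\varphi-\overline{\varphi}\geq \psi - C$, which we obtain from a comparison with the subsolution. Evans--Krylov or Guan--Streets-type estimates for the concave $J$-operator, followed by Schauder estimates, then give $C^\infty_{\rm loc}$ bounds.

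\textbf{Step 3: convergence.} Monotonicity of the $J$-functional along the flow, together with its boundedness from below, forces $\int_0^\infty\!\int_X|\partial_t\varphi|^2\ldots<\infty$. Its boundedness from below holds because $J$-nefness is implied by the boundary condition and $J$-bigness. Hence every sequential limit of the normalized potentials is a stationary weak solution, smooth off $\JNull$. To get uniqueness of the limit, I would use the uniqueness of weak solutions of the $J$-equation in the relevant energy class, or the argument of \cite{3d} via minimizers of the $J$-functional. This yields full convergence in $C^\infty_{\rm loc}(X\setminus\JNull)$.

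\textbf{Step 4: convergence of currents.} Since the potentials are uniformly $L^1$-bounded $\omega$-psh functions, their convergence in $L^1$ follows from local smooth convergence plus compactness. This gives $\omega_{\varphi(t)}\to\omega+\ddc\varphi_\infty$ as currents.

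The main obstacle is the uniform lower bound for $\varphi-\overline\varphi-\psi$ in Step 2. The maximum principle in Step 2 is only useful once this lower bound is known. I would attempt it by a parabolic comparison with $\psi$ plus a time-dependent constant, controlled through the energy functional, exactly as in \cite{3d}.
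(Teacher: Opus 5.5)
Your proposal follows the paper's route. The paper's entire proof is the remark that the threefold argument of \cite{3d} goes through verbatim once Theorem~\ref{thm:main}(3) supplies $T_0=S+\varepsilon\chi\in\mathcal K_J^{\mathrm{an}}(\alpha,\beta)$ with $E_+(T_0)=\JNull(\alpha,\beta)$, which is exactly your barrier $\psi$. Like the paper, you do not prove the decisive parabolic estimates (in your formulation, the lower bound $\varphi-\overline{\varphi}\geq\psi-C$ and the uniqueness of the limit) and instead defer them to \cite{3d}.

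Two cautions. The device you float for that lower bound, comparison with $\psi+k(t)$, does not in general yield a time-uniform bound: $P_\chi(T_0)<\cab$ controls only traces over $(n-1)$-planes, not $\tr_{T_0}\chi$, so you may be forced to take $k'(t)<0$. Also, for $n\geq4$ the normalization $\overline{\varphi}$ must be defined with $\chi^n$ rather than $\chi^3$.
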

We now explain the main new point in the proof. Let
$D_1,\ldots,D_N$ be the null divisors and set
\[
 q_2:=\bigl(\cab\alpha-(n-2)\beta\bigr)\alpha^{n-3},
 \qquad M_{ij}:=q_2\cdot D_i\cdot D_j.
\]
To perturb all the null divisors into the strict region at the same time,
we need to prove that $M$ is negative definite. We first approach
the boundary by solvable $J$-equations and test the corresponding
$J$-functionals on smooth logarithmic approximations of effective
divisors. The resulting asymptotic intersection polynomial shows
that $M$ is negative semidefinite and gives an additional sign along
any positive kernel direction.

The strictness requires a higher-dimensional replacement for the
surface Hodge index theorem used in \cite{3d}. If $A,B$ are
K\"ahler forms on a compact K\"ahler $m$-fold, $m\geq3$, satisfying
\[
 cA^m=mB\wedge A^{m-1},
\]
the generalized Khovanskii-Teissier inequality for the $J$-equation \cite[Theorem 2.13]{CT21} says that 
\[
 \bigl(cA^{m-1}-(m-1)B\wedge A^{m-2}\bigr)\cdot\gamma=0
\]
implies
\[
 \bigl(cA^{m-2}-(m-2)B\wedge A^{m-3}\bigr)\cdot\gamma^2\leq0,
\]
with equality only when $\gamma=0$ in cohomology. Such an inequality is an high-dimension analogue of the Hodge index theorem, so we also call it the $J$-Hodge type inequality. The new point here 
is to extend it to singular normal K\"ahler spaces, more specifically, in our application, to a possibly singular null divisor. 
We prove the required version for ambient-induced classes by passing
to its normalization and a log resolution. Uniform estimates for
approximating $J$-equations, including the oscillation estimate of
Guo--Phong \cite{GP24}, a family version of the elliptic $ C^2$-estimate analogous to \cite{T23}, allow us to retain the strict equality
case in the limit. With the $J$-Hodge type inequality, the resulting numerical positivity and the
Demailly--P\u{a}un theorem \cite{DP04} rule out a kernel of
$M$.

It follows that there are positive rational numbers $e_i$ such that,
for $E_0=\sum_i e_{i}D_i$,
\[
 q_2\cdot E_0\cdot D_j<0\qquad(1\leq j\leq N).
\]
For small $s>0$, and then small $\varepsilon>0$, the class
\[
 \alpha_{s,\varepsilon}
 :=\alpha-s\{E_0\}-\varepsilon\beta
\]
lies in the strict region. A suitable specialization of Fang--Ma's
equation \cite{FM24} produces a K\"ahler form
$A\in\alpha_{s,\varepsilon}$ with $P_\chi(A)<\cab$. Adding back
the divisorial part gives
\[
 S=A+s\sum_i e_{i}[D_i],\qquad T=S+\varepsilon\chi.
\]
Then $T\in\mathcal K_J^{\mathrm{an}}(\alpha,\beta)$, and its
positive-Lelong locus is exactly the union of the null divisors. A
separate forced-locus argument gives the reverse inclusion for every
current belonging to $\mathcal K_J^{\mathrm{an}}(\alpha,\beta)$.

The paper is organized as follows. Section 2 develops the numerical
consequences of $J$-bigness and proves the finiteness of the divisorial
null locus. The first part of Section 3 relates intersections of null
divisors to asymptotic slopes of the $J$-functional along
logarithmic rays. We then prove the strict $J$-Hodge inequality on
resolutions of null divisors. The final part establishes the negative
definiteness of $M$, constructs a direction that strictly increases
all null-divisor slopes, and completes the proof of the main theorem.

\section{\texorpdfstring{$J$}{J}-bigness}
In this section, we prove some numerical facts about $J$-bigness.
\begin{lemma}\label{lem:birational-certificate}
Suppose that
$T=S+\varepsilon\chi\in
\mathcal K_J^{\mathrm{an}}(\alpha,\beta)$.
Then there exist a modification $\mu:Y\to X$, a number
$0<\varepsilon'<\varepsilon$, an effective real divisor $D$ with
simple normal crossings on $Y$, and a K\"ahler form $A\in\mu^*(\alpha-\varepsilon'\beta)-\{D\},$ such that
\[
 \qquad P_{\mu^*\chi}(A)<\cab.
\]

\end{lemma}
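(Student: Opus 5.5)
Let $S$ be the current with analytic singularities in $T=S+\varepsilon\chi$, so $[S]=\alpha-\varepsilon\beta$ and $P_\chi(S)\leq\cab$. The plan is to resolve the singularities of $S$, split off the divisorial part, and then use a small amount of $\chi$ to make both the positivity and the cone condition strict.

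First, take a log resolution $\mu:Y\to X$ of the ideal sheaf $\mathcal I$ defining the singularities of $S$ (locally generated by the $f_\ell$). Then $\mu^*\mathcal I\cdot\cO_Y=\cO_Y(-F)$ with $F$ an effective simple normal crossing divisor. Pulling back, the local potential $a\log\sum|f_\ell|^2\circ\mu$ becomes $a\log|s_F|^2$ plus a smooth function, since the $\mu^*f_\ell$ generate $\cO(-F)$ without common zeros after dividing by the local equation of $F$. Hence
\[
\mu^*S=\theta+a[F],
\]
where $\theta$ is a smooth closed $(1,1)$-form in $\mu^*(\alpha-\varepsilon\beta)-a\{F\}$. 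Set $D=aF$.

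Second, on $Y\setminus\operatorname{Supp}F$ the form $\theta$ coincides with $\mu^*S$. Away from the singular set of $S$ and the exceptional locus, $S$ is smooth, and the weak condition $P_\chi(S)\leq\cab$ is then the pointwise inequality $P_\chi(S)\leq\cab$; in particular $S\geq\cab^{-1}\chi$ there. By continuity of $\theta$ and of $\mu^*\chi$, the inequalities $\theta\geq\cab^{-1}\mu^*\chi$ and $P_{\mu^*\chi}(\theta)\leq\cab$ (computed with $\theta\geq0$) extend to all of $Y$ in a degenerate sense. Indeed, $P_\Theta(A)\leq c$ is equivalent to $cA^{n-1}-(n-1)A^{n-2}\wedge\Theta\geq0$ together with $A>0$, and this is a closed condition when written as $A-\Theta/c\geq0$ plus the form inequality. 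The only defect is that $\mu^*\chi$ is merely semipositive, so $\theta$ is not yet Kähler.

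Third, use part of $\varepsilon\chi$ and a standard exceptional perturbation. Since $\mu$ is a composition of blow-ups with smooth centers, there is an effective $\mu$-exceptional SNC divisor $E$ and a smooth form $\eta\in-\{E\}$ such that $\mu^*\omega_X+\delta\eta$ is Kähler for small $\delta>0$. Take
\[
A=\theta+(\varepsilon-\varepsilon')\mu^*\chi+\delta\eta,\qquad D=aF+\delta E,
\]
so that $A\in\mu^*(\alpha-\varepsilon'\beta)-\{D\}$. Adding $t\mu^*\chi$ with $t=\varepsilon-\varepsilon'>0$ strictly decreases $P$: since $\theta\geq\cab^{-1}\mu^*\chi$, we have $P_{\mu^*\chi}(\theta+t\mu^*\chi)\leq\cab/(1+t\cab)<\cab$ wherever $\mu^*\chi>0$. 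The form $\theta+t\mu^*\chi+\delta\eta$ is Kähler once $\delta$ is small relative to $t$, because $\theta+t\mu^*\chi\geq(\cab^{-1}+t)\mu^*\chi$ dominates a multiple of $\mu^*\omega_X$.

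The main obstacle is keeping $P_{\mu^*\chi}(A)<\cab$ uniformly on the exceptional locus, where $\mu^*\chi$ degenerates and $\delta\eta$ may be negative. There I would argue that $P_{\mu^*\chi}(A)$ is controlled by $\mu^*\chi$ relative to $A$. Since $A\geq(\cab^{-1}+t)\mu^*\chi-C\delta\,\mu^*\omega_X+c\,\delta\,\omega_Y$ and $P$ is monotone decreasing in $A$, for $\delta$ small compared with $t$ one gets $P\leq\cab(1+t\cab/2)^{-1}<\cab$ everywhere by compactness. The Kähler positivity from $\eta$ then handles the directions where $\mu^*\chi$ vanishes, since there $P$ is small.
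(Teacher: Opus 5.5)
Your architecture is the paper's: resolve the ideal of $S$, write $\mu^*S=\theta+a[F]$ with $\theta$ smooth, extend $\theta\geq\cab^{-1}\mu^*\chi$ by continuity from the dense set where $\mu$ is biholomorphic and $S$ is smooth, spend part of $\varepsilon\chi$ as a margin, and subtract a small exceptional class. However, the final estimate, which is the only delicate step, fails as written.

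Your lower bound $A\geq(\cab^{-1}+t)\mu^*\chi-C\delta\,\mu^*\omega_X+c\,\delta\,\omega_Y$ discards the cone information on $\theta$. The inequality $\theta\geq\cab^{-1}\mu^*\chi$ only says that each generalized eigenvalue of $\mu^*\chi$ relative to $\theta$ is at most $\cab$, whereas $P$ is the sum of the largest $n-1$ of them. Order reversal applied to your right-hand side therefore yields only about $(n-1)\cab/(1+t\cab)$ at generic points, and this exceeds $\cab$ for small $t$ when $n\geq3$. So the bound $\cab(1+t\cab/2)^{-1}$ does not follow.

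The same confusion affects your intermediate claim $P_{\mu^*\chi}(\theta+t\mu^*\chi)\leq\cab/(1+t\cab)$. It is false: take the top $n-1$ eigenvalues all equal to $\cab/(n-1)$. What the full condition $P_{\mu^*\chi}(\theta)\leq\cab$ together with Jensen's inequality for the concave map $x\mapsto x/(1+tx)$ gives is $\cab/(1+t\cab/(n-1))$, which is still $<\cab$. Finally, ``there $P$ is small'' is not a valid argument on the exceptional locus, since $\mu^*\chi$ degenerates only in some directions there.

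The fix is to keep $\theta$ and dominate the negative part of $\delta\eta$ by $\mu^*\chi$ itself. Since $\omega_X\leq C\chi$ on $X$, you have $A\geq\theta+(t-C'\delta)\mu^*\chi+c\,\delta\,\omega_Y$. On the dense open set where $\mu$ is biholomorphic and $S$ is smooth, order reversal and the Jensen bound give $P_{\mu^*\chi}(A)\leq\cab/(1+(t-C'\delta)\cab/(n-1))$. This is a uniform margin below $\cab$ once $C'\delta<t/2$. Because $A$ is K\"ahler, $P_{\mu^*\chi}(A)$ is continuous on all of $Y$, so the strict bound extends across the exceptional locus.

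This is precisely the paper's device. It writes the correction as $-s\theta_F=s\Omega-s\widetilde\chi$ with $\Omega=\widetilde\chi-\theta_F$ K\"ahler. Subtracting $s\widetilde\chi$ moves eigenvalues by $\lambda\mapsto\lambda/(1-s\lambda)$, which is absorbed by the $\delta\widetilde\chi$ margin, and adding the K\"ahler form $s\Omega$ can only decrease $P$.
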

\begin{proof}
Locally, a potential of $S$ has the form
\[
 u=\frac a2\log\!\left(\sum_{j=1}^N|f_j|^2\right)+v,
 \qquad v\ \text{smooth}.
\]
The singularities of $S$ defines a ideal $\cI_S$. 
Choose a  logarithmic resolution $\mu:Y\to X$, obtained by a finite sequence of blowups along smooth
centers, such that the union of the exceptional locus and the resolved
pole divisor has simple normal crossings \cite{H77}. On this
resolution, the pullback of every local ideal
$(f_1,\ldots,f_N)$ is principal. Thus, locally on $Y$, one can write
$f_j\circ\mu=g h_j$, where $g$ is a local equation for the resolved
pole divisor, including its multiplicities, and the $h_j$ have no
common zero. Consequently
\[
 u\circ\mu=\frac a2\log|g|^2
 +\frac a2\log\!\left(\sum_j|h_j|^2\right)+v\circ\mu,
\]
and the last two terms are smooth.

Define intrinsically
$D_0:=\sum_F\nu(\mu^*S,F)F$,
where $F$ runs over the prime divisors of $Y$.  The preceding local
calculation and the Poincar\'e--Lelong formula show that this is a finite
effective real divisor with simple normal crossings and that
$B:=\mu^*S-[D_0]$ is a smooth closed form.  Here $\mu^*S$ is
defined by composing local plurisubharmonic potentials with $\mu$.
Moreover, $[D_0]$ is exactly the divisorial part in the Siu
decomposition of the positive current $\mu^*S$.  The residual current
in that decomposition is positive, so $B\geq0$.  We have therefore
proved
\[
 \mu^*S=B+[D_0],
\]
where $D_0\geq0$ has simple normal crossings and $B$ is a smooth
semipositive form.
Put $\widetilde\chi=\mu^*\chi$. On the dense open set where $\mu$
is biholomorphic and $S$ is smooth, the weak cone inequality becomes
the corresponding pointwise inequality. Every generalized eigenvalue
of $\widetilde\chi$ relative to $B$ is bounded above by
$P_{\widetilde\chi}(B)$, and hence
\begin{equation}\label{eq:B-lower-certificate}
 B\geq\cab^{-1}\widetilde\chi.
\end{equation}  Since both sides of
\eqref{eq:B-lower-certificate} are smooth, the inequality extends to
all of $Y$. Choose $0<\delta<\varepsilon$ and set
\[
 C=B+\delta\widetilde\chi,
 \qquad d_\delta=\frac{\cab}{1+\delta\cab/(n-1)}<\cab.
\]
The inverse eigenvalues of $\widetilde{B}$ with respect to $\widetilde{\chi}$ change from $\lambda_i$ to
$\lambda_i/(1+\delta\lambda_i)$. Since
$x\mapsto x/(1+\delta x)$ is increasing and concave, Jensen's
inequality on every $(n-1)$-tuple gives
\begin{equation}\label{eq:certificate-margin}
 P_{\widetilde\chi}(C)\leq d_\delta
\end{equation}
on the dense open set. The form $C$ may be degenerate on the
exceptional locus, so no value of $P_{\widetilde\chi}(C)$ is asserted
there.

The usual metric construction for a modification supplies an effective
$\mu$-exceptional real divisor $F$, supported on the chosen
exceptional divisor with simple normal crossings, and a smooth representative
$\theta_F\in\{F\}$ for which
\[
 \Omega:=\widetilde\chi-\theta_F
\]
is K\"ahler.  Concretely, for one blow-up the negative exceptional
bundle has positive curvature in the contracted directions; adding a
sufficiently small multiple of that curvature to the pulled-back
metric gives positivity, and one iterates this along the resolution.

For $s>0$, put
\[
 A=B+\delta\widetilde\chi-s\theta_F
   =B+(\delta-s)\widetilde\chi+s\Omega.
\]
By \eqref{eq:B-lower-certificate}, $A$ is K\"ahler whenever
$s<\cab^{-1}+\delta$.  Choose $s$ still smaller so that
\[
 sd_\delta<1,\qquad
 \frac{d_\delta}{1-sd_\delta}<\cab.
\]
If $\lambda_i$ are the generalized eigenvalues of
$\widetilde\chi$ relative to $C$, subtracting
$s\widetilde\chi$ changes them to
$\lambda_i/(1-s\lambda_i)$.  Thus
\[
 P_{\widetilde\chi}(C-s\widetilde\chi)
 \leq\frac{d_\delta}{1-sd_\delta}<\cab.
\]
Since $A=(C-s\widetilde\chi)+s\Omega$ and the map
$G\mapsto P_{\widetilde\chi}(G)$ is order-reversing, the same strict
inequality holds for $A$ on the dense open set. It extends across the
exceptional set by continuity because $A$ is positive definite there.
Finally, with $D=D_0+sF$ and
$\varepsilon'=\varepsilon-\delta$,
\[
 [A]+\{D\}
 =[\mu^*S]+\delta\mu^*\beta
 =\mu^*(\alpha-\varepsilon'\beta).
\]
\end{proof}

\begin{lemma}\label{lem:strict-approximation}
Assume that $(\alpha,\beta)$ is $J$-nef. For every $t>0$ there is a
K\"ahler form $\Omega_t\in(1+t)\alpha$ such that
\begin{equation}\label{eq:strict-approximation}
 P_\chi(\Omega_t)<\cab.
\end{equation}
More precisely, $\Omega_t$ may be chosen to solve
\begin{equation}\label{eq:strict-approximation-equation}
 n\chi\wedge\Omega_t^{n-1}+f_t\chi^n=\cab\Omega_t^n,
 \qquad
 f_t=\frac{\cab t(1+t)^{n-1}\alpha^n}{\beta^n}>0.
\end{equation}
\end{lemma}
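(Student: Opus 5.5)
The plan is to produce $\Omega_t$ directly as a solution of \eqref{eq:strict-approximation-equation}. The strict cone inequality \eqref{eq:strict-approximation} will then be read off pointwise from the equation.

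\emph{Step 1: cohomological consistency.} First I check that the constant $f_t$ is forced by cohomology. Integrating \eqref{eq:strict-approximation-equation} over $X$ with $[\Omega_t]=(1+t)\alpha$ and $[\chi]=\beta$ requires
\[
 \cab(1+t)^n\alpha^n-n(1+t)^{n-1}\beta\cdot\alpha^{n-1}=f_t\,\beta^n .
\]
Since $n\beta\cdot\alpha^{n-1}=\cab\alpha^n$ by the definition of $\cab$, the left side equals $\cab(1+t)^{n-1}\alpha^n\bigl((1+t)-1\bigr)=\cab t(1+t)^{n-1}\alpha^n$. This is exactly $f_t\beta^n$ for the stated $f_t>0$.

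\emph{Step 2: solvability.} Equation \eqref{eq:strict-approximation-equation} is a generalized Monge--Amp\`ere / $J$-type equation with constant nonnegative coefficient $f_t$. I would invoke the Nakai-type solvability criterion of Song \cite{S20} and Datar--Pingali \cite{DP21}, or equivalently the specialization of Fang--Ma \cite{FM24} used later in the paper. This criterion says the equation has a smooth solution $\Omega_t\in(1+t)\alpha$, satisfying the cone condition, provided that for every irreducible subvariety $V\subsetneq X$ of dimension $0<p<n$,
\[
 \int_V\Bigl(\cab\bigl((1+t)\alpha\bigr)^p-p\,\beta\bigl((1+t)\alpha\bigr)^{p-1}\Bigr)>0 .
\]
The possible contribution of the $f_t\chi^n$ term to the subvariety condition only has a favorable sign, since $f_t\geq0$. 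The displayed quantity equals
\[
 (1+t)^{p-1}\Bigl(J_{\cab}(V,\alpha,\beta)+\cab\, t\int_V\alpha^p\Bigr).
\]
It is strictly positive because $J$-nefness gives $J_{\cab}(V,\alpha,\beta)\geq0$, and $t\int_V\alpha^p>0$ since $\alpha$ is K\"ahler.

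\emph{Step 3: the strict cone condition.} Let $\lambda_1,\dots,\lambda_n>0$ be the eigenvalues of $\chi$ with respect to $\Omega_t$ at a point. Then \eqref{eq:strict-approximation-equation} reads
\[
 \sum_i\lambda_i+f_t\prod_i\lambda_i=\cab .
\]
Hence for every $j$ we have $\sum_{i\neq j}\lambda_i=\cab-\lambda_j-f_t\prod_i\lambda_i<\cab$. Since $P_\chi(\Omega_t)$ is the maximum of the sums of $n-1$ eigenvalues, this gives $P_\chi(\Omega_t)<\cab$ everywhere, which is \eqref{eq:strict-approximation}.

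\emph{Main obstacle.} The main obstacle is Step~2: matching the hypotheses of the cited criterion precisely. The published versions differ in how the $f\chi^n$ term enters the subvariety inequalities, and in whether a uniform strictness (rather than merely pointwise positivity on each $V$) is required. If a uniform version is needed, I would first try to obtain it from the slack $\cab t\int_V\alpha^p\geq\cab t\,\min\bigl(1,\alpha^p\cdot\beta^{n-p}/\beta^n\bigr)$-type bounds, which are automatic because $\alpha$ and $\beta$ are K\"ahler. Failing that, I would fall back on the Fang--Ma formulation, where the constant-coefficient case is treated directly.
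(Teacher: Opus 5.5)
Your proposal follows the paper's route, and your pointwise derivation of $P_\chi(\Omega_t)<\cab$ from $\sum_i\lambda_i+f_t\prod_i\lambda_i=\cab$ correctly verifies directly what the paper reads off from the cited theorem: you use the same compatibility identity for $f_t$, the same slack $(1+t)^{p-1}\bigl(J_{\cab}(V,\alpha,\beta)+\cab t\,\alpha^p\cdot V\bigr)$ coming from $J$-nefness, and an appeal to a Nakai-type existence theorem. The paper invokes Chen's \cite[Theorem~1.11]{C21}, which requires uniform positivity (and a lower bound on $f$, trivially met since $f_t>0$), and your ``main obstacle'' disappears because the slack is already proportional to $\int_V\alpha^p$: with $\delta_t=\cab t/(2(n-1)(1+t))$ one gets $\int_V\bigl[(\cab-(n-p)\delta_t)((1+t)\alpha)^p-p\beta((1+t)\alpha)^{p-1}\bigr]\geq\frac{\cab t}{2}(1+t)^{p-1}\alpha^p\cdot V>0$, so no $V$-independent lower bound on $\int_V\alpha^p$ (such as the one you sketch) is needed.
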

\begin{proof}
Set
\[
 \delta_t:=\frac{\cab t}{2(n-1)(1+t)}.
\]
For every irreducible proper $p$-dimensional subvariety $W\subsetneq X$,
one has
\begin{align*}
&\int_W\bigl[(\cab-(n-p)\delta_t)((1+t)\alpha)^p
             -p\beta((1+t)\alpha)^{p-1}\bigr]\\
&\quad=(1+t)^{p-1}\left\{
 J_{\cab}(W,\alpha,\beta)
 +[\cab t-(n-p)(1+t)\delta_t]\alpha^p\cdot W
 \right\}>0.
\end{align*}
The coefficient in square brackets is at least $\cab t/2$. For
$W=X$, the required top-dimensional equality is instead the
compatibility identity
\[
 f_t\beta^n
 =\cab((1+t)\alpha)^n
  -n\beta((1+t)\alpha)^{n-1}
 =\cab t(1+t)^{n-1}\alpha^n.
\]
Moreover, $f_t>0>-(2n)^{-1}\cab^{-(n-1)}$, so the lower-bound
hypothesis for the modified term also holds. Thus all the hypotheses
of \cite[Theorem~1.11]{C21} are satisfied. That theorem gives a solution of
\eqref{eq:strict-approximation-equation} in $(1+t)\alpha$ satisfying
\eqref{eq:strict-approximation}.
\end{proof}

\begin{proposition}
\label{prop:radical-obstruction}
Assume $n\geq3$ and that $(\alpha,\beta)$ is $J$-nef and
$J$-big. Then
\[
\{\gamma\in \MN(X):(\cab\alpha^{n-1}-(n-1)\alpha^{n-2}\beta)\cdot \gamma=0\}=\{0\}.
\]
Here $\MN(X)$ denotes Boucksom's cone of modified-nef classes: a class
$\eta\in H^{1,1}(X,\R)$ is modified nef if, for every $\varepsilon>0$,
it contains a closed current $T_\varepsilon\geq-\varepsilon\omega$ whose
generic Lelong number along every prime divisor is zero.
\end{proposition}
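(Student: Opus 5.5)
The plan splits the claim into nonnegativity, which should follow from $J$-nefness alone, and strictness, which is where $J$-bigness enters through Lemma~\ref{lem:birational-certificate}. Write $Q:=\cab\alpha^{n-1}-(n-1)\alpha^{n-2}\beta$.

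\emph{Nonnegativity.} For $t>0$, Lemma~\ref{lem:strict-approximation} gives a K\"ahler form $\Omega_t\in(1+t)\alpha$ with $P_\chi(\Omega_t)<\cab$. This strict cone condition says exactly that
\[
\Theta_t:=\cab\Omega_t^{n-1}-(n-1)\Omega_t^{n-2}\wedge\chi
\]
is a strictly positive closed $(n-1,n-1)$-form. Let $\gamma\in\MN(X)$ and take currents $T_\varepsilon\in\gamma$ with $T_\varepsilon\geq-\varepsilon\omega$. Then
\[
\gamma\cdot\{\Theta_t\}=\int_X T_\varepsilon\wedge\Theta_t\geq-\varepsilon\int_X\omega\wedge\Theta_t .
\]
Letting $\varepsilon\to0$ gives $\gamma\cdot\{\Theta_t\}\geq0$. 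As $t\to0$, the classes $\{\Theta_t\}$ converge to $Q$, so $Q\cdot\gamma\geq0$. This step uses only that $\gamma$ is pseudoeffective.

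\emph{Strictness.} Fix $0\neq\gamma\in\MN(X)$. Apply Lemma~\ref{lem:birational-certificate} to some $T\in\mathcal K_J^{\mathrm{an}}(\alpha,\beta)$. This gives $\mu:Y\to X$, an effective divisor $D$ with simple normal crossings, and a K\"ahler form $A\in a:=\mu^*(\alpha-\varepsilon'\beta)-\{D\}$ with $P_{\widetilde\chi}(A)<\cab$, where $\widetilde\chi=\mu^*\chi$. Set
\[
\Theta:=\cab A^{n-1}-(n-1)A^{n-2}\wedge\widetilde\chi ,
\]
a strictly positive closed form on $Y$. Then $\Theta\geq c_0\,\widetilde\omega^{n-1}$ for some K\"ahler form $\widetilde\omega$ on $Y$ and some $c_0>0$. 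Pulling back a positive current representing $\gamma$ gives a pseudoeffective class $\mu^*\gamma\neq0$, so
\[
\mu^*\gamma\cdot\{\Theta\}\geq c_0\,\mu^*\gamma\cdot\{\widetilde\omega\}^{n-1}>0 .
\]
It remains to compare $\mu^*\gamma\cdot\{\Theta\}$ with $Q\cdot\gamma=\mu^*Q\cdot\mu^*\gamma$. Write $b=\mu^*\beta$ and $\mu^*\alpha=a+h$ with $h:=\varepsilon' b+\{D\}$. Consider the segment $a_s:=a+sh$, $s\in[0,1]$, and $F(x):=\cab x^{n-1}-(n-1)x^{n-2}b$. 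Then
\[
F(\mu^*\alpha)-F(a)=\int_0^1 F'(a_s)[h]\,ds .
\]
So it suffices to show $F'(a_s)[h]\cdot\mu^*\gamma\geq0$ for each $s$.

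At the level of currents, $a_s$ is represented by $T_s:=A+s\varepsilon'\widetilde\chi+s[D]$. This current satisfies the weak cone condition $P_{\widetilde\chi}(T_s)\leq\cab$, since adding positive terms only decreases $P$. Away from $D$, the linearization $F'(T_s)[h]$ is a combination of terms $\cab(n-1)T_s^{n-2}\wedge h-(n-1)(n-2)T_s^{n-3}\wedge h\wedge\widetilde\chi$. This is positive in the $\widetilde\chi$-direction, because the cone condition in dimension $n-1$ forces the $(n-2)$-dimensional cone quantity to be positive. I therefore expect $\varepsilon' F'(a_s)[b]\cdot\mu^*\gamma\geq0$, by the same pseudoeffective pairing argument used for nonnegativity.

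\emph{Main obstacle.} The hard step is the $\{D\}$-direction, i.e.\ the terms $F'(a_s)[\{D\}]\cdot\mu^*\gamma$, which are intersections involving $D\cdot\mu^*\gamma$.
\begin{itemize}
\item For components of $D$ that are not $\mu$-exceptional, I would use modified nefness directly. The class $\mu^*\gamma$ is represented by currents with arbitrarily small negative part and zero generic Lelong number along those components. Hence $\mu^*\gamma|_{D_i}$ is pseudoeffective, and pairing it against the positive cone form built from $A$ restricted to $D_i$ gives a nonnegative term.
\item For exceptional components, I would invoke Boucksom's divisorial Zariski decomposition, $\mu^*\gamma=Z+N$ with $N$ exceptional and $Z$ modified nef on $Y$. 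Since $\mu_*N=0$, one has $\mu^*Q\cdot N=Q\cdot\mu_*N=0$. This reduces the comparison to $Z$, where the argument for non-exceptional components applies. A nonzero $Z$ still gives $\{\Theta\}\cdot Z>0$, because $\mu_*Z=\gamma\neq0$.
\end{itemize}
If this bookkeeping of the boundary terms fails, the fallback is a direct regularization argument on $X$. One would pair $\gamma$ with $\cab T^{n-1}-(n-1)T^{n-2}\wedge\chi$, computed on the complement of the analytic singular set of $T$. There, $T$ is smooth and strictly cone-positive, so the form dominates $c\,\omega^{n-1}$ on a dense open set. One then uses the zero generic Lelong numbers of $\gamma$ along divisors to show that no mass of the pairing escapes into the singular locus.
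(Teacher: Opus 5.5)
Your reduction is the same as the paper's once you use the modified-nef part $Z=Z(\mu^*\gamma)$, as in your second bullet. The paper passes to the modification of Lemma~\ref{lem:birational-certificate}, uses that the negative part of $\mu^*\gamma$ is exceptional (so $\mu^*z\cdot Z=z\cdot\gamma$ and $Z\neq0$), and uses that $Z|_E$ is pseudo-effective for every component $E$ of $D$. For exceptional $E$ this is Boucksom's result on modified-nef classes, not the argument you give for non-exceptional components. It then integrates a derivative along the cohomological segment from $[A]$ to $\mu^*\alpha$. Your direct inequality $z\cdot\gamma\geq\{\Theta\}\cdot Z>0$ is the paper's contradiction argument read forwards.

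\textbf{The gap.} The claimed sign $F'(a_s)[h]\cdot Z\geq0$ is not established. To pair the class $\cab a_s^{n-2}-(n-2)a_s^{n-3}b$ with the pseudo-effective $(2,2)$-class $h\cdot Z$, you need a smooth positive representative of it, and $T_s=A+s\varepsilon'\widetilde\chi+s[D]$ does not supply one. Powers of $[D]$ are undefined. The class $a_s^{n-2}$ contains self-intersection terms $s^k\{D\}^k$ of exceptional divisors, which in general have no positive representative. Pointwise positivity of the cone form on $Y\setminus D$ says nothing about the cohomology class. At $s=1$ the class is $\mu^*\alpha$, and $J$-nefness alone gives no representative of $\alpha$ satisfying the cone condition at all. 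Your fallback fails for the same reason: $\cab T^{n-1}-(n-1)T^{n-2}\wedge\chi$ computed off the polar set of $T$ need not represent $z$. Zero generic Lelong numbers of $\gamma$ along divisors do not control that defect.

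\textbf{The missing idea.} Feed Lemma~\ref{lem:strict-approximation} into the whole segment, not just into the nonnegativity step, and use convexity of $A\mapsto P_\chi(A)$. This convexity holds because $P_\chi$ is a maximum of the convex functions $A\mapsto\tr_{A|H}(\chi|H)$.
\begin{itemize}
\item For $\sigma>0$ take $\Omega_\sigma\in(1+\sigma)\alpha$ with $P_\chi(\Omega_\sigma)<\cab$, and set $A_{\sigma,t}=(1-t)\mu^*\Omega_\sigma+tA$.
\item For $t>0$ these are K\"ahler forms with $P_{\mu^*\chi}(A_{\sigma,t})<\cab$. Hence $\cab A_{\sigma,t}^{n-2}-(n-2)\mu^*\chi\wedge A_{\sigma,t}^{n-3}$ is a strictly positive smooth form.
\item The derivative $\frac{d}{dt}[A_{\sigma,t}]=-(\sigma\mu^*\alpha+\varepsilon'\mu^*\beta+\{D\})$ has pseudo-effective product with $Z$, by the facts you already listed.
\item Therefore $F_\sigma(t):=\bigl(\cab[A_{\sigma,t}]^{n-1}-(n-1)\mu^*\beta[A_{\sigma,t}]^{n-2}\bigr)\cdot Z$ is nonincreasing in $t$.
\item This gives $\{\Theta\}\cdot Z=F_\sigma(1)\leq F_\sigma(0)=z\cdot\gamma+O(\sigma)$. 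Letting $\sigma\to0$ yields $z\cdot\gamma\geq\{\Theta\}\cdot Z>0$.
\end{itemize}
With this replacement your argument closes, and your separate nonnegativity step is no longer needed.
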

Before giving the proof, we explain the ideas in a smooth model.
Suppose that there is a K\"ahler form
$\theta\in\alpha-\varepsilon\beta$ satisfying
$P_\chi(\theta)<\cab$, and suppose that $0\ne\gamma\in\MN(X)$
satisfies $z\cdot\gamma=0$, where
$z:=\cab\alpha^{n-1}-(n-1)\alpha^{n-2}\beta$. For $\sigma>0$,
choose $\Omega_\sigma\in(1+\sigma)\alpha$ as in
Lemma~\ref{lem:strict-approximation}, join $\Omega_\sigma$ to $\theta$,
and put
\[
 \theta_{\sigma,t}=(1-t)\Omega_\sigma+t\theta.
\]
Convexity of $P_\chi$ shows that the cone condition is strict along
this path. Differentiating the associated cohomological functional
gives the required strict decrease, because
$\beta\cdot\gamma\ne0$. The proof below implements this idea on a
modification.

\begin{proof}
Set $z:=\cab\alpha^{n-1}-(n-1)\alpha^{n-2}\beta$, and take the
modification $(\mu:Y\to X,\varepsilon',D,A)$ given by
Lemma~\ref{lem:birational-certificate}.
Thus
\[
 [A]+\{D\}=\mu^*(\alpha-\varepsilon'\beta),
 \qquad P_{\mu^*\chi}(A)<\cab.
\]
Let $\widetilde\gamma:=Z(\mu^*\gamma)$ be the modified-nef part of
the divisorial Zariski decomposition by Boucksom
\cite[Proposition~3.10]{B04}. Since $\gamma$ is modified nef, the
divisorial negative part of $\mu^*\gamma$ is $\mu$-exceptional.
Indeed, let $F\subset Y$ be a nonexceptional prime divisor and put
$G=\mu(F)$. For every $\delta>0$, modified nefness supplies a current
$T_\delta\in\gamma$ with $T_\delta\geq-\delta\omega$ and
$\nu(T_\delta,G)=0$. Its pullback is well defined from local
quasi-plurisubharmonic potentials, satisfies
$\mu^*T_\delta\geq-C\delta\omega_Y$ for a fixed K\"ahler form
$\omega_Y$ on $Y$, and has
$\nu(\mu^*T_\delta,F)=\nu(T_\delta,G)=0$, because $\mu$ is
biholomorphic at the generic point of $F$. Letting $\delta\downarrow0$
shows that the generic minimal multiplicity of $\mu^*\gamma$ along
$F$ is zero. The negative part
$\mu^*\gamma-Z(\mu^*\gamma)$ thus
has zero pushforward, and the projection formula gives
\[
 \mu_*\widetilde\gamma=\gamma,\qquad
 \mu^*z\cdot\widetilde\gamma=z\cdot\gamma=0.
\]
In particular, $\widetilde\gamma$ is nonzero and modified nef when $\gamma$ is nonzero.

We first record one positivity fact used in the differentiation below.
Because $\mu^*\alpha$ and $\mu^*\beta$ have smooth semipositive
representatives, their products with $\widetilde\gamma$ are
pseudo-effective $(2,2)$-classes. The class
$\mu^*\beta\cdot\widetilde\gamma$ is nonzero:
\[
 \mu_*\bigl(\mu^*\beta\cdot\widetilde\gamma\bigr)
 =\beta\cdot\gamma\ne0,
\]
because, for a nonzero positive current $G\in\gamma$, the K\"ahler
form $\beta\wedge\omega^{n-2}$ has strictly positive mass on $G$, so
$\int_X\beta\wedge G\wedge\omega^{n-2}>0$. Moreover,
$\widetilde\gamma$ is modified nef, and every component $E$ of the
divisor $D$ with simple normal crossings is a prime divisor. Thus
\cite[Proposition~2.4]{B04} shows that
$\widetilde\gamma|_E$ is pseudo-effective; pushing a positive
representative forward from $E$ to $Y$ shows that
$\{E\}\cdot\widetilde\gamma$ is
pseudo-effective.  Consequently, for every $\sigma\geq0$,
\[
 \bigl(\sigma\mu^*\alpha+\varepsilon'\mu^*\beta+\{D\}\bigr)
 \cdot\widetilde\gamma
 \quad\text{is a nonzero pseudo-effective $(2,2)$-class.}
\]

For $\sigma>0$, choose $\Omega_\sigma$ as in
Lemma~\ref{lem:strict-approximation}. For $0\leq t\leq1$, define
\[
 A_{\sigma,t}=(1-t)\mu^*\Omega_\sigma+tA.
\]
For $t>0$, the form $A_{\sigma,t}$ is K\"ahler.  Its class satisfies
the directly calculated identities
\[
 [A_{\sigma,t}]
 =\bigl(1+\sigma(1-t)\bigr)\mu^*\alpha
  -t\bigl(\varepsilon'\mu^*\beta+\{D\}\bigr),
 \qquad
 \frac d{dt}[A_{\sigma,t}]
 =-\bigl(\sigma\mu^*\alpha+\varepsilon'\mu^*\beta+\{D\}\bigr).
\]
On the dense set where $\mu$ is biholomorphic, convexity of
$P_{\mu^*\chi}$ and the strict inequalities for $\Omega_\sigma$ and
$A$ give
\[
 P_{\mu^*\chi}(A_{\sigma,t})
 \leq(1-t)P_\chi(\Omega_\sigma)
      +tP_{\mu^*\chi}(A)<\cab.
\]
Hence
\[
 \cab A_{\sigma,t}^{n-2}
 -(n-2)\mu^*\chi\wedge A_{\sigma,t}^{n-3}
 >0
\]
as a strictly positive $(n-2,n-2)$-form.

Consider the cohomological function
\[
 F_\sigma(t)=
 \Bigl(\cab[A_{\sigma,t}]^{n-1}
 -(n-1)\mu^*\beta\,[A_{\sigma,t}]^{n-2}\Bigr)
 \cdot\widetilde\gamma.
\]
At $t=0$, the equality $\mu^*z\cdot\widetilde\gamma=0$ gives
\[
 F_\sigma(0)
 =\cab\sigma(1+\sigma)^{n-2}
   (\mu^*\alpha)^{n-1}\cdot\widetilde\gamma=O(\sigma).
\]
Differentiating, we obtain
\begin{align*}
 F_\sigma'(t)=-(n-1)
 &\Bigl(\cab[A_{\sigma,t}]^{n-2}
 -(n-2)\mu^*\beta\,[A_{\sigma,t}]^{n-3}\Bigr)\\
 &\qquad\cdot
 \bigl(\sigma\mu^*\alpha+\varepsilon'\mu^*\beta+\{D\}\bigr)
 \cdot\widetilde\gamma.
\end{align*}
The strict positivity of the first factor and the pseudo-effectivity of
the nonzero $(2,2)$-class recorded above show that
$F_\sigma'(t)\leq0$ for $t>0$, and then also at $t=0$ by
continuity. The displayed expression for $F_\sigma'(t)$ is a
polynomial in $(\sigma,t)$ whose coefficients are fixed
cohomological intersection numbers; it therefore extends continuously
to $\sigma=0$, independently of the choices of the forms
$\Omega_\sigma$. At
$(\sigma,t)=(0,1)$, the first factor is represented by the strictly
positive form
\[
 \cab A^{n-2}-(n-2)\mu^*\chi\wedge A^{n-3},
\]
whereas
$(\varepsilon'\mu^*\beta+\{D\})\cdot\widetilde\gamma$ is nonzero
and pseudo-effective. Its pairing with a strictly positive form is
strictly positive because any nonzero positive current has positive
mass against such a form,
so the derivative at $(0,1)$ is strictly negative.  By continuity,
there are
$\tau,\kappa,\sigma_0>0$ such that, for
$0<\sigma<\sigma_0$ and $1-\tau\leq t\leq1$,
\[
 F_\sigma'(t)\leq-(n-1)\kappa.
\]
Together with monotonicity on the rest of the interval, this yields
\[
 F_\sigma(1)
 \leq F_\sigma(0)-(n-1)\kappa\tau<0
\]
when $\sigma>0$ is sufficiently small.

This is impossible.  Indeed, $A_{\sigma,1}=A$, and
\[
 \cab A^{n-1}-(n-1)\mu^*\chi\wedge A^{n-2}>0.
\]
  Pairing this strictly positive
form with the nonzero pseudo-effective class
$\widetilde\gamma$ gives $F_\sigma(1)>0$.  The contradiction proves
the proposition.
\end{proof}
The following lemma proves one inclusion in the main theorem.
\begin{lemma}\label{lem:easy-direction-inclusion}
Assume that $(\alpha,\beta)$ is $J$-nef. If
$T=S+\varepsilon\chi\in\cK_J^{\mathrm{an}}(\alpha,\beta)$, then
\[
    \JNull(\alpha,\beta)\subset E_+(T).
\]
\end{lemma}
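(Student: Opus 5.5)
Argue by contradiction. Let $V\subsetneq X$ be an irreducible subvariety of dimension $p>0$ with $J_{\cab}(V,\alpha,\beta)=0$, and suppose some point of $V$ lies outside $E_+(T)$. Since $S$ has analytic singularities, $E_+(T)=E_+(S)$ is a closed analytic subset of $X$. So $V\not\subset E_+(S)$, and the generic point of $V$ lies in the open set $U=X\setminus E_+(S)$, on which $S$ is smooth. The goal is to show that the restriction of the cone condition to $V$ then forces $J_{\cab}(V,\alpha,\beta)>0$.

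\textbf{Step 1: desingularize $S$ on $V$.} Apply Lemma~\ref{lem:birational-certificate} to $T$. This gives $\mu:Y\to X$, $0<\varepsilon'<\varepsilon$, an effective snc divisor $D$, and a K\"ahler form $A\in\mu^*(\alpha-\varepsilon'\beta)-\{D\}$ with $P_{\mu^*\chi}(A)<\cab$.
\begin{itemize}
\item By the construction in that proof, $D=D_0+sF$. Here $D_0$ is supported over $E_+(S)$ and $F$ is $\mu$-exceptional.
\item After further blowups I may assume $\mu$ is an isomorphism over a dense open subset of $U$ meeting $V$.
\item Let $\widetilde V$ be the strict transform of $V$. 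It is not contained in $\operatorname{Supp}D$, so $\{D\}|_{\widetilde V}$ is an effective class on $\widetilde V$.
\end{itemize}

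\textbf{Step 2: the cone condition restricts to $\widetilde V$.} The pointwise condition $P_{\mu^*\chi}(A)<\cab$ on $Y$ restricts to any $p$-dimensional subspace. By the standard eigenvalue argument (the quantity $P$ only drops under restriction, combined with convexity / Schur-type estimates), it implies on the regular part of $\widetilde V$ that
\[
 \cab A^{p}-p\,\mu^*\chi\wedge A^{p-1}>0 .
\]
Integrating over $\widetilde V$ gives
\[
 \int_{\widetilde V}\bigl(\cab A^p-p\,\mu^*\chi\wedge A^{p-1}\bigr)>0 .
\]

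\textbf{Step 3: compare with $J_{\cab}(V,\alpha,\beta)$.} Write $a=\mu^*\alpha|_{\widetilde V}$, $b=\mu^*\beta|_{\widetilde V}$, and $\eta=\varepsilon' b+\{D\}|_{\widetilde V}$, so that $[A]=a-\eta$. Then $J_{\cab}(V,\alpha,\beta)=\int_{\widetilde V}(\cab a^p-p\,b\,a^{p-1})$. I interpolate along the path $A_t=(1-t)\mu^*\Omega_\sigma+tA$, exactly as in the proof of Proposition~\ref{prop:radical-obstruction}, but now on $\widetilde V$. Set
\[
 G_\sigma(t)=\int_{\widetilde V}\bigl(\cab[A_t]^p-p\,b\,[A_t]^{p-1}\bigr).
\]
\begin{itemize}
\item Its derivative is $-p$ times the pairing of the positive form $\cab A_t^{p-1}-(p-1)\mu^*\chi\wedge A_t^{p-2}$ (positive by convexity of $P$) with $\sigma a+\eta$.
\item The class $\sigma a+\eta$ is pseudoeffective on $\widetilde V$.
\item Hence $G_\sigma$ is nonincreasing, and therefore $G_\sigma(0)\ge G_\sigma(1)>0$.
\item Letting $\sigma\to0$ only gives $J_{\cab}(V,\alpha,\beta)\ge$ a quantity that need not be bounded below, so instead I use the strictness near $t=1$. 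The derivative there is strictly negative because $b$ restricted to $\widetilde V$ is nonzero (since $\varepsilon'>0$). As in Proposition~\ref{prop:radical-obstruction}, this gives $G_\sigma(0)\ge G_\sigma(1)+\kappa$ with $\kappa>0$ uniform in $\sigma$.
\item Letting $\sigma\to0$ yields $J_{\cab}(V,\alpha,\beta)\ge\kappa>0$, which is the desired contradiction.
\end{itemize}

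The main obstacle is Step 2 together with the case $p=1$. For $p<n-1$, restricting the $(n-1)$-dimensional cone condition to lower-dimensional subspaces needs the monotonicity that $P_\chi(A)<\cab$ implies $\cab A^p-p\chi A^{p-1}>0$ on every $p$-plane. I would prove this by the eigenvalue criterion: among the eigenvalues of $\chi$ relative to $A$ on a $p$-plane, the sum of any $p-1$ of them is at most $\frac{p-1}{n-1}$ times the maximal sum of $n-1$ of them. For $p=1$, Step 3 degenerates to a direct comparison of degrees on a curve, which is simpler. A further technical point is ensuring that $\mu$ does not alter $V$ generically, so that the strict transform gives the correct intersection numbers via the projection formula.
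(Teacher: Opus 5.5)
Your argument is correct, but it takes a different route from the paper.

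\textbf{The paper's route.} The paper never leaves $X$. Near $E_+(T)$ it uses the form $\Omega_t\in(1+t)\alpha$ of Lemma~\ref{lem:strict-approximation}, and near a point of $V_{\reg}\setminus E_+(T)$ it uses $(1+t)T$. A regularized maximum glues these into a smooth K\"ahler form $\widetilde\Omega_t\in(1+t)\alpha$ with $P_\chi(\widetilde\Omega_t)<\cab$. The summand $\varepsilon\chi$ supplies the margin $\cab-c_\varepsilon$ and the bound $T\geq\varepsilon\chi$ on a fixed ball. This yields a $t$-independent lower bound $\delta_0$ for $J_{\cab}(V,(1+t)\alpha,\beta)=\cab t(1+t)^{p-1}\alpha^p\cdot V$, which tends to $0$.

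\textbf{Your route.} You pass to the modification of Lemma~\ref{lem:birational-certificate} and run the segment argument of Proposition~\ref{prop:radical-obstruction} on the strict transform $\widetilde V$. The hypothesis $V\not\subset E_+(T)$ enters only through pseudo-effectivity of $\{D\}|_{\widetilde V}$. This avoids the gluing and gives the explicit bound $J_{\cab}(V,\alpha,\beta)\geq\int_{\widetilde V}(\cab A^p-p\,\mu^*\chi\wedge A^{p-1})$. The price is input the paper does not need:
\begin{enumerate}
\item $\mu$ must be chosen as a principalization of $\cI_S$ that is an isomorphism over $X\setminus E_+(S)$, so that $\operatorname{Supp}D\subset\mu^{-1}(E_+(S))$. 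This is a choice of resolution; it cannot be obtained by further blowups.
\item The pairing of $\{D\}|_{\widetilde V}$ with $\cab A_t^{p-1}-(p-1)\mu^*\chi\wedge A_t^{p-2}$ is nonnegative because that form is positive on $(p-1)$-planes, hence on the components of $D\cap\widetilde V$.
\end{enumerate}

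\textbf{Two points in your write-up need correcting.}

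First, the $\kappa$-detour is unnecessary, and the remark motivating it is wrong. Since $A_1=A$ does not depend on $\sigma$, $G_\sigma(1)=\int_{\widetilde V}(\cab A^p-p\,\mu^*\chi\wedge A^{p-1})$ is one fixed positive number. Monotonicity alone gives $(1+\sigma)^{p-1}\bigl(J_{\cab}(V,\alpha,\beta)+\cab\sigma\,\alpha^p\cdot V\bigr)=G_\sigma(0)\geq G_\sigma(1)$, which contradicts nullity as $\sigma\to0$.

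Second, the eigenvalue criterion you propose for Step~2 is false. For $n=4$, $p=2$ and eigenvalues $10,1,1,1$, a $2$-plane spanned by the eigenvectors of $10$ and $1$ has an eigenvalue $10>\frac13\cdot 12$. Nothing of that sort is needed. For a $k$-plane $W$ with $k\leq n-1$, extend an $A$-orthonormal basis of $W$ to an $(n-1)$-plane $H$. Since $\chi>0$, $\tr_{A|W}(\chi|W)\leq\tr_{A|H}(\chi|H)<\cab$, which is exactly positivity of $\cab A^k-k\chi\wedge A^{k-1}$ on $W$. The cases $k=p$ and $k=p-1$ give every positivity statement you use. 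The case $p=1$ is not special either, since then $G_\sigma'(t)=-\cab\,(\sigma a+\eta)\cdot\widetilde V\leq0$.
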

   \begin{proof}
Put $Z:=E_+(T)$.  Adding the smooth form $\varepsilon\chi$ does
not change Lelong numbers, so $Z$ is also the polar set of $S$.
On $X\setminus Z$, the current $S$ is smooth, and the
local-convolution inequality becomes the pointwise inequality
$P_\chi(S)\leq\cab$. In particular,
$S\geq\cab^{-1}\chi$ there, so $S$ is positive definite on
$X\setminus Z$.

Let $\mu_1\geq\cdots\geq\mu_n>0$ be the eigenvalues of
$S^{-1}\chi$. Since $T=S+\varepsilon\chi$, the corresponding
eigenvalues of $T^{-1}\chi$ are
\[
 \frac{\mu_i}{1+\varepsilon\mu_i},
 \qquad 1\leq i\leq n.
\]
By concavity of $x\mapsto x/(1+\varepsilon x)$ and Jensen's inequality,
\begin{align*}
 P_\chi(T)\leq\frac{\cab}{1+\varepsilon\cab/(n-1)}
 =:c_\varepsilon<\cab.
\end{align*}
Also $T\geq\varepsilon\chi$ on $X\setminus Z$.

Let $V\subsetneq X$ be an irreducible $J$-null subvariety of
dimension $p>0$, and suppose for contradiction that $V\not\subset
Z$.  Since $V_{\reg}$ is dense in $V$, we can choose coordinate
balls
\[
 U'\Subset U\Subset X\setminus Z,
 \qquad U'\cap V_{\reg}\ne\varnothing.
\]
Because $[T]=\alpha=[\omega]$, the $\ddc$-lemma for currents gives
$T=\omega+\ddc\varphi$ for a global quasi-plurisubharmonic function
$\varphi$ with analytic singularities and polar set $Z$.

Fix $t>0$ and choose the K\"ahler form
$\Omega_t\in(1+t)\alpha$ given by
Lemma~\ref{lem:strict-approximation}; in particular,
$P_\chi(\Omega_t)<\cab$.

Since $\Omega_t$ and $(1+t)T$ are cohomologous, there is a global
quasi-plurisubharmonic function $\psi_t$ such that
\[
 (1+t)T=\Omega_t+\ddc\psi_t.
\]
The function $\psi_t$ has analytic singularities and polar set $Z$.
On $X\setminus Z$, we have
\[
 P_\chi((1+t) T)\leq\frac{c_\varepsilon}{1+t}<\cab,
 \qquad
 P_\chi(\Omega_t)<\cab.
\]

We now glue these two branches without losing the strict inequality.
If $Z=\varnothing$, simply set $\widetilde\Omega_t:=(1+t) T$;
the preceding estimates already give all the required properties.
We may therefore suppose that $Z\ne\varnothing$.  Fix a small number
$\tau>0$.
After adding a constant $C_t$ to $\psi_t$, we can arrange
\[
 u_t:=\psi_t+C_t>2\tau\quad\text{on }\overline{U'},
 \qquad
 u_t<-2\tau\quad\text{on a neighborhood of }Z.
\]
Indeed, $\psi_t$ is smooth and bounded on $\overline{U'}$, whereas
its analytic singularities imply that it tends uniformly to
$-\infty$ when one approaches the compact set $Z$.

Let $M_\tau(x,y)$ be a standard regularized maximum: it is smooth,
convex and nondecreasing in each variable, satisfies
$M_\tau(x+a,y+a)=M_\tau(x,y)+a$, and agrees with
$\max\{x,y\}$ when $|x-y|>\tau$.  Define
\[
 \widetilde\Omega_t
 :=\Omega_t+\ddc M_\tau(u_t,0).
\]
The separation by $2\tau$ shows that
$\widetilde\Omega_t=(1+t) T$ on $U'$ and
$\widetilde\Omega_t=\Omega_t$ near $Z$.  Hence it extends to
a smooth closed form on all of $X$ and represents $(1+t)\alpha$.

For completeness, at a transition point put
$a=\partial_1M_\tau(u_t,0)$.  Monotonicity and translation invariance
give $0\leq a\leq1$ and
$\partial_2M_\tau(u_t,0)=1-a$.  The chain rule gives
\[
 \widetilde\Omega_t
 =a(1+t)T+(1-a)\Omega_t+Q_t,
 \qquad
 Q_t=\partial_{11}M_\tau(u_t,0)\,du_t\wedge d^c u_t\geq0.
\]
Thus $\widetilde\Omega_t$ is K\"ahler. If
$d_t:=\max_X P_\chi(\Omega_t)<\cab$, then convexity and order reversal
of $P_\chi$ give
\[
 P_\chi(\widetilde\Omega_t)
 \leq a\frac{c_\varepsilon}{1+t}+(1-a)d_t
 \leq\max\left\{\frac{c_\varepsilon}{1+t},d_t\right\}<\cab.
\]
Thus in either case $Z=\varnothing$ or $Z\ne\varnothing$, the form
$\widetilde\Omega_t\in(1+t)\alpha$ is smooth and K\"ahler, equals
$(1+t)T$ on $U'$, and satisfies
$P_\chi(\widetilde\Omega_t)<\cab$.

We next translate this ambient cone condition to $V$.  At a point of
$V_{\reg}$, let $W=T_xV_{\reg}$ and choose a
$\widetilde\Omega_t$-orthonormal basis of $W$.  Extend it to an
orthonormal basis of an $(n-1)$-plane.  Positivity of $\chi$ gives
\[
 \tr_{\widetilde\Omega_t|W}(\chi|W)
 \leq P_\chi(\widetilde\Omega_t)<\cab.
\]
On the $p$-plane $W$, the elementary trace identity
\[
 \left.p\chi\wedge\widetilde\Omega_t^{p-1}\right|_W
 =\tr_{\widetilde\Omega_t|W}(\chi|W)\,
   \left.\widetilde\Omega_t^p\right|_W
\]
therefore shows that
$\cab\widetilde\Omega_t^p-p\chi\wedge
\widetilde\Omega_t^{p-1}$ is strictly positive on $V_{\reg}$.

There is moreover a lower bound on the fixed set
$U'\cap V_{\reg}$ which is independent of $t$.  The same
orthonormal-basis argument applied to $T$ gives
\[
 c_\varepsilon T^p-p\chi\wedge T^{p-1}\geq0
 \quad\text{on every complex }p\text{-plane}.
\]
Since $\widetilde\Omega_t=(1+t) T$ on $U'$, we obtain
\begin{align*}
 \cab\widetilde\Omega_t^p
   -p\chi\wedge\widetilde\Omega_t^{p-1}
 &=(1+t)^{p-1}
   \bigl(\cab(1+t) T^p-p\chi\wedge T^{p-1}\bigr)\\
 &=(1+t)^{p-1}\Bigl[
   (\cab-c_\varepsilon+\cab t)T^p
   +(c_\varepsilon T^p-p\chi\wedge T^{p-1})\Bigr]\\
 &\geq(\cab-c_\varepsilon)T^p
 \geq(\cab-c_\varepsilon)\varepsilon^p\chi^p.
\end{align*}
The last inequality follows directly from $T\geq\varepsilon\chi$.

Set
\[
 \delta_0:=(\cab-c_\varepsilon)\varepsilon^p
 \int_{U'\cap V_{\reg}}\chi^p>0.
\]
It is positive because $U'\cap V_{\reg}$ is a nonempty open set and
$\chi|_{V_{\reg}}$ is K\"ahler.  The number $\delta_0$ is independent
of $t$.  Since the integrand is nonnegative on all of $V_{\reg}$,
and $V_{\sing}$ has measure zero, integration over $V$ gives
\begin{equation}\label{eq:forced-lower-bound}
 J_{\cab}(V,(1+t)\alpha,\beta)
 =\int_V\bigl(\cab\widetilde\Omega_t^p
       -p\chi\wedge\widetilde\Omega_t^{p-1}\bigr)
 \geq\delta_0.
\end{equation}
On the other hand, the nullity of $V$ means
$(\cab\alpha^p-p\beta\alpha^{p-1})\cdot V=0$.  Thus a direct
cohomological calculation gives
\[
\begin{aligned}
 J_{\cab}(V,(1+t)\alpha,\beta)
 &=(1+t)^{p-1}
   \bigl(\cab(1+t)\alpha^p-p\beta\alpha^{p-1}\bigr)\cdot V\\
 &=\cab t(1+t)^{p-1}\alpha^p\cdot V
 \longrightarrow0 \qquad (t\downarrow0).
\end{aligned}
\]
This contradicts \eqref{eq:forced-lower-bound}.  Therefore
$V\subset Z$, as required.
\end{proof}

Let $m_0>0$ be the minimum on $X$ of the smallest eigenvalue of
$\chi$ with respect to $\omega$, and set
\[
 z:=\cab\alpha^{n-1}-(n-1)\beta\alpha^{n-2}.
\]
\begin{lemma}\label{lem:only-divisors}
Assume that $P_\chi(\omega)\leq\cab$.
\begin{enumerate}[label=\textup{(\roman*)}]
\item $\cab\omega-\chi\geq(n-2)m_0\omega$;
\item for $1\leq p\leq n-2$,
  \begin{equation}\label{eq:lower-strict}
   \cab\omega^p-p\chi\wedge\omega^{p-1}
   \geq(n-1-p)m_0\omega^p;
  \end{equation}
\item the pair $(\alpha,\beta)$ is $J$-nef, and
\[
 \JNull(\alpha,\beta)
 =\bigcup_{\substack{D\ \mathrm{prime\ divisor}\\z\cdot D=0}}D.
\]
\end{enumerate}
\end{lemma}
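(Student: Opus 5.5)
Everything reduces to pointwise linear algebra at a fixed point $x$. Choose coordinates in which $\omega=\sum_i\sqrt{-1}dz_i\wedge d\bar z_i$ and $\chi=\sum_i\lambda_i\sqrt{-1}dz_i\wedge d\bar z_i$ with $\lambda_1\geq\cdots\geq\lambda_n\geq m_0>0$. Then
\[
P_\chi(\omega)(x)=\lambda_1+\cdots+\lambda_{n-1},
\]
the sum of the $n-1$ largest eigenvalues, since the maximum of $\tr_{\omega|H}(\chi|H)$ over hyperplanes $H$ is attained on the span of the top eigenvectors. The hypothesis thus reads $\sum_{i\leq n-1}\lambda_i\leq\cab$.

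\emph{Part (i).} We need $\cab-\lambda_k\geq(n-2)m_0$ for every $k$. Fix $k$ and take any $n-1$ indices containing $k$. The other $n-2$ eigenvalues in this set are each at least $m_0$, so
\[
\lambda_k+(n-2)m_0\leq\sum_{i\leq n-1}\lambda_i\leq\cab.
\]

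\emph{Part (ii).} Restrict to a complex $p$-plane $W$ and use the trace identity from the proof of Lemma~\ref{lem:easy-direction-inclusion}:
\[
p\chi\wedge\omega^{p-1}|_W=\tr_{\omega|W}(\chi|W)\,\omega^p|_W.
\]
By Ky Fan, $\tr_{\omega|W}(\chi|W)\leq\lambda_1+\cdots+\lambda_p$. The remaining $n-1-p$ top eigenvalues are each at least $m_0$, so $\lambda_1+\cdots+\lambda_p\leq\cab-(n-1-p)m_0$. This gives \eqref{eq:lower-strict} on every $p$-plane. Strictly speaking this positivity on all $p$-planes is weak positivity, and that is exactly what is needed when integrating over subvarieties.

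\emph{Part (iii).} Integrate \eqref{eq:lower-strict} over an irreducible $V$ of dimension $p\leq n-2$. This gives
\[
J_{\cab}(V,\alpha,\beta)\geq(n-1-p)m_0\int_V\omega^p>0,
\]
so such $V$ are never null. For $p=n-1$, the hypothesis gives $\cab\omega^{n-1}-(n-1)\chi\wedge\omega^{n-2}\geq0$ on hyperplanes, hence $J_{\cab}(D,\alpha,\beta)=z\cdot D\geq0$ for every prime divisor $D$. This proves $J$-nefness, and the only null subvarieties are the prime divisors with $z\cdot D=0$, which is the claimed description of $\JNull$. The step needing care is the Ky Fan/eigenvalue identification of $P_\chi$ together with the passage from form inequalities on planes to integrals over singular $V$. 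The latter holds because $V_{\sing}$ has measure zero and the tangent planes of $V_{\reg}$ are complex $p$-planes.
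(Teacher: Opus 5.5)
Your proof is correct and follows essentially the paper's argument: diagonalize $\chi$ in $\omega$-unitary coordinates, bound partial eigenvalue sums using the $(n-1)$-eigenvalue constraint together with $\lambda_i\geq m_0$, and integrate over $V_{\reg}$. The only difference is in how the $(p,p)$-form inequality is checked. The paper reads off the coefficients $\cab-\sum_{i\in I}\lambda_i$ of $\cab\omega^p-p\chi\wedge\omega^{p-1}$ in the basis $e_I$. That route needs only the trivial bound $\sum_{i\in J}\lambda_i\leq P_\chi(\omega)$ for coordinate hyperplanes, and it yields a diagonal form with nonnegative coefficients. You instead test restrictions to arbitrary $p$-planes via Ky Fan, which gives weak positivity; this is equally sufficient for integrating over subvarieties.
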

\begin{proof}
At a point, choose $\omega$-unitary coordinates in which
$\chi=\sum_{i=1}^n\lambda_i e_i$, with every $\lambda_i\geq m_0$.
The boundary condition says that
\[
 \sum_{i\in J}\lambda_i\leq\cab
 \quad\text{for every }J\subset\{1,\ldots,n\}
 \text{ with }|J|=n-1.
\]
Given an index set $I$ of cardinality $p\leq n-2$, extend it to such
a set $J$. Then
\[
 \cab-\sum_{i\in I}\lambda_i
 \geq\sum_{i\in J\setminus I}\lambda_i
 \geq(n-1-p)m_0.
\]
These are precisely the coefficients of
$\cab\omega^p-p\chi\wedge\omega^{p-1}$ in the basis $e_I$, proving
\eqref{eq:lower-strict}; the case $p=1$ is assertion~\textup{(i)}.
For $p=n-1$, the same coefficient computation gives nonnegativity.
Integrating over the regular locus of each irreducible subvariety
proves $J$-nefness. For $p\leq n-2$, \eqref{eq:lower-strict} makes
the integral strictly positive, so only prime divisors can be
$J$-null. This proves~\textup{(iii)}.
\end{proof}
\begin{lemma}\label{lem:finite-face}
Assume that $(\alpha,\beta)$ is $J$-nef and $J$-big. Then there are
only finitely many $J$-null prime divisors $D_1,\ldots,D_N$, and
\begin{equation}\label{eq:zero-face}
 \{\xi\in\Psef(X):z\cdot\xi=0\}
 =
 \sum_{i=1}^N\R_{\geq0}\{D_i\}.
\end{equation}
Here $\Psef(X)$ is the cone of pseudo-effective $(1,1)$-classes on $X$.
\end{lemma}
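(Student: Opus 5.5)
We combine Proposition~\ref{prop:radical-obstruction} with Boucksom's divisorial Zariski decomposition \cite{B04}. First we check that $z$ is nonnegative on $\Psef(X)$. We may assume $n\geq3$ here, since for $n=2$ the class $z=\cab\alpha-\beta$ is a $(1,1)$-class and the argument below is the same. The class $z$ is a limit of classes represented by strictly positive $(n-1,n-1)$-forms. Indeed, for $t>0$, Lemma~\ref{lem:strict-approximation} gives $\Omega_t\in(1+t)\alpha$ with $P_\chi(\Omega_t)<\cab$. Hence $\cab\Omega_t^{n-1}-(n-1)\chi\wedge\Omega_t^{n-2}>0$, and its class is $(1+t)^{n-2}\bigl(\cab(1+t)\alpha^{n-1}-(n-1)\beta\alpha^{n-2}\bigr)$, which tends to $z$ as $t\downarrow0$. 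Pairing with a positive current in $\xi\in\Psef(X)$ therefore gives $z\cdot\xi\geq0$. In particular $z\cdot D\geq0$ for every prime divisor, which is $J$-nefness in codimension one.

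Next, let $\xi\in\Psef(X)$ satisfy $z\cdot\xi=0$. Write its divisorial Zariski decomposition $\xi=Z(\xi)+N(\xi)$, where $Z(\xi)\in\MN(X)$ and $N(\xi)=\sum_k a_k E_k$ with $a_k\geq0$ and the $E_k$ finitely many prime divisors (\cite[Theorem~3.12]{B04}). Both parts are pseudo-effective, so $z\cdot Z(\xi)\geq0$ and $z\cdot E_k\geq0$. Since the sum is zero, each term vanishes. Proposition~\ref{prop:radical-obstruction} then forces $Z(\xi)=0$, and every $E_k$ with $a_k>0$ is $J$-null. So $\xi$ is a nonnegative combination of classes of $J$-null prime divisors. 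The reverse inclusion in \eqref{eq:zero-face} is immediate from linearity, once we know the null prime divisors are finitely many.

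The main obstacle is finiteness. Our plan is to show that any finite collection $D_1,\dots,D_m$ of distinct $J$-null prime divisors is an exceptional family in the sense of \cite[Definition~3.8]{B04}. Boucksom proves that such a family has linearly independent classes, so $m\leq h^{1,1}(X)$.

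To check exceptionality, take $\xi=\sum_i c_i\{D_i\}$ with $c_i\geq0$, not all zero. Then $\xi$ is pseudo-effective and $z\cdot\xi=0$, so by the previous step $Z(\xi)=0$ and $\xi=N(\xi)$. We must still show that $N(\xi)$ equals $\sum c_iD_i$ as a divisor, and not merely that the classes agree. Boucksom's criterion (\cite[Theorem~3.9]{B04}) is that every effective divisor whose class is modified nef must be zero. So we argue as follows. Suppose a nonnegative combination $\sum c_iD_i$ of the $D_i$ is modified nef. It has $z$-degree zero, so Proposition~\ref{prop:radical-obstruction} makes its class vanish. A nonzero effective divisor has nonzero class, because its mass against $\omega^{n-1}$ is positive. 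Hence all $c_i=0$, and the family is exceptional.

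Finally, independence bounds the number of null prime divisors by $\dim H^{1,1}(X,\R)$, which yields the finite list $D_1,\dots,D_N$ and \eqref{eq:zero-face}.
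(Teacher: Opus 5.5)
Your proof is correct, and two of its three steps match the paper. Killing $Z(\xi)$ via the divisorial Zariski decomposition and Proposition~\ref{prop:radical-obstruction} is the paper's argument. So is getting finiteness by showing that null prime divisors form Boucksom exceptional families, whose classes are linearly independent. Your detour about identifying $N(\xi)$ with $\sum c_iD_i$ as a divisor is unnecessary: exceptionality only requires that a modified-nef class in the positive span vanish, and your final argument shows exactly that, as the paper does.

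The genuine difference is how you prove that $z$ is nonnegative. The paper proves $z\cdot\gamma\geq0$ only for modified-nef $\gamma$, by contradiction. If $z\cdot\gamma<0$, then $\gamma+s\alpha$ with $s=-z\cdot\gamma/z\cdot\alpha$ is a nonzero modified-nef class of $z$-degree zero, contradicting Proposition~\ref{prop:radical-obstruction}. Nonnegativity on the components of $N(\xi)$ then comes separately from $J$-nefness. You instead use Lemma~\ref{lem:strict-approximation} to write $z$ as a limit of classes of closed strictly positive forms $\cab\Omega_t^{n-1}-(n-1)\chi\wedge\Omega_t^{n-2}$. This gives $z\cdot\xi\geq0$ on the whole pseudo-effective cone.

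Your version needs only $J$-nefness at this step and treats $Z(\xi)$ and the divisorial part uniformly. The paper's version reuses the obstruction already established but invokes $J$-bigness here. Since Proposition~\ref{prop:radical-obstruction} itself rests on Lemma~\ref{lem:strict-approximation}, neither route is more elementary. Yours isolates a slightly stronger fact: $z$ is nonnegative on all of $\Psef(X)$ under $J$-nefness alone.
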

\begin{proof}
First observe that $z\cdot\gamma\geq0$ for every modified-nef class
$\gamma$. Otherwise, set
\[
 s=-\frac{z\cdot\gamma}{z\cdot\alpha}>0,
 \qquad z\cdot\alpha=\beta\cdot\alpha^{n-1}>0.
\]
Then $\gamma_s:=\gamma+s\alpha$ is modified nef and
$z\cdot\gamma_s=0$. Moreover,
$\gamma_s\cdot\alpha^{n-1}>0$, so $\gamma_s\ne0$. This contradicts
Proposition~\ref{prop:radical-obstruction}.

By Boucksom's divisorial Zariski decomposition,
$\xi=Z(\xi)+\{N(\xi)\}$, where $Z(\xi)$ is modified nef and $N(\xi)$ is
an effective real divisor. If $\xi\cdot z=0$, then
$\{N(\xi)\}\cdot z=Z(\xi)\cdot z=0$: the first pairing is nonnegative by
$J$-nefness, the second by the preceding paragraph, and their sum is
zero. By Proposition~\ref{prop:radical-obstruction}, $Z(\xi)=0$.
Since $\xi=\{N(\xi)\}$ and $z$ is nonnegative on every prime divisor,
each prime component of $N(\xi)$ has zero pairing with $z$ and is
therefore a null divisor. This proves that the left
side of \eqref{eq:zero-face} is generated by the null prime divisors.
Conversely, every nonnegative combination of null prime divisors is
pseudo-effective and has zero pairing with $z$, proving equality in
\eqref{eq:zero-face}.

It remains to prove that there are only finitely many.  Any finite
collection of null prime divisors is an \emph{exceptional family} in
Boucksom's sense: their positive span meets the modified-nef cone only
at $0$. Indeed, Proposition~\ref{prop:radical-obstruction} rules out
every nonzero modified-nef class in that span. Boucksom
\cite[Definition~3.12 and Proposition~3.13(iii)]{B04} proves that
the cohomology classes in an exceptional family are linearly independent.
Its size is therefore bounded by the Picard number.  If there were
infinitely many null prime divisors, one could select a finite collection
larger than that bound, a contradiction.
\end{proof}
\section{A direction that increases the \texorpdfstring{$J$}{J}-slope}
\subsection{Semidefiniteness from a deformation of the
\texorpdfstring{$J$}{J}-slope}
For $t>0$, put
\[
\beta_t=\beta+t\alpha,\qquad c_t=\cab+nt,\qquad \chi_t=\chi+t\omega.
\]
Then
\[
\begin{aligned}
J_{c_t}(V,\alpha,\beta_t)
&=\int_V\bigl(c_t\alpha^p-p\beta_t\alpha^{p-1}\bigr)\\
&=J_{\cab}(V,\alpha,\beta)+(n-p)t\int_V\alpha^p.
\end{aligned}
\]
Since every proper $V$ has $p\leq n-1$, $J$-nefness of
$(\alpha,\beta)$ implies the uniform numerical inequality required by
\cite[Theorem~1.1]{C21}. Indeed, choosing the uniform constant
$\delta=t/2$ gives
\[
 \int_V\bigl[(c_t-(n-p)\delta)\alpha^p
       -p\beta_t\alpha^{p-1}\bigr]
 =J_{\cab}(V,\alpha,\beta)
  +\frac{(n-p)t}{2}\alpha^p\cdot V\geq0;
\]
the inequality is strict when $p<n$, and it is the required
compatibility equality when $V=X$. Thus there is a
unique K\"ahler form $\Omega_t\in\alpha$ satisfying
\[
 n\chi_t\wedge\Omega_t^{n-1}=c_t\Omega_t^n.
\]
For a smooth closed form $\Gamma$ of complementary degree and an
$\Omega_t$-K\"ahler potential $\phi$, let
$\Omega_\phi=\Omega_t+\ddc\phi$, and define the mixed energy
\begin{equation}\label{eq:mixed-energy}
 E_p^\Gamma(\phi)
 :=
 \frac1{p+1}\sum_{\ell=0}^p
 \int_X\phi\,\Gamma\wedge
 \Omega_t^\ell\wedge\Omega_\phi^{p-\ell}.
\end{equation}
Integration by parts gives
\[
 \frac d{ds}E_p^\Gamma(\phi_s)
 =\int_X\dot\phi_s\,\Gamma\wedge\Omega_{\phi_s}^p.
\]
Recall that the $\mathcal J$-functional is defined by
\begin{equation}\label{eq:J-functional}
 \mathcal J_t(\phi)
 :=nE_{n-1}^{\chi_t}(\phi)-c_tE_n^1(\phi).
\end{equation}
The functional is convex along weak geodesics, and $0$ is a critical
point because $\Omega_t$ solves the $J$-equation. Consequently,
$\mathcal J_t(\phi)\geq\mathcal J_t(0)=0$ for every smooth $\Omega_t$-K\"ahler
potential $\phi$.

Let $F$ be an effective integral divisor.  Choose $\lambda>0$ so
small that $\alpha-\lambda\{F\}$ is K\"ahler, and choose a K\"ahler form
$\Omega_{\lambda}$ in that class.  Equip
$\mathcal O_X(F)$ with a smooth Hermitian metric $h$ whose curvature is
\[
 \theta_F=\frac{\Omega_t-\Omega_{\lambda}}{\lambda}.
\]
If $s_F$ is the defining section, rescale its metric so that
$|s_F|_h^2\leq1$, and put
\begin{equation}\label{eq:log-test}
 u_R=\log(e^{-R}+|s_F|_h^2),\qquad \phi_R=\lambda u_R.
\end{equation}

\begin{lemma}\label{lem:log-slope}
    The functions $\phi_R$ are smooth
$\Omega_t$-K\"ahler potentials.  Moreover,
\begin{equation}\label{eq:energy-slope}
 \lim_{R\to\infty}\frac1R E_p^\Gamma(\phi_R)
 =
 -\lambda\,\Gamma\left[
 \alpha^p-\frac1{p+1}\sum_{\ell=0}^{p}
 \alpha^\ell(\alpha-\lambda F)^{p-\ell}
 \right].
\end{equation}
\end{lemma}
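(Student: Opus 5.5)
The proof splits into two parts: the pointwise Kähler property of $\Omega_t+\ddc\phi_R$, and the slope formula. For the slope we differentiate $E_p^\Gamma(\phi_R)$ in $R$ and analyze the limit of the resulting measures near $F$.

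\textbf{Step 1: positivity.} With the convention $\ddc\log|s_F|_h^2=[F]-\theta_F$, a direct computation gives
\[
 \ddc u_R=-a_R\,\theta_F+G_R/\lambda,\qquad
 a_R:=\frac{|s_F|_h^2}{e^{-R}+|s_F|_h^2},\qquad
 G_R:=\lambda\,\frac{e^{-R}\,i\langle D's_F, D's_F\rangle_h}{(e^{-R}+|s_F|_h^2)^2}\geq0 .
\]
Hence, with $w_R:=1-a_R\in(0,1]$,
\[
 \Omega_{\phi_R}=\Omega_t-\lambda a_R\theta_F+G_R=(1-a_R)\Omega_t+a_R\Omega_\lambda+G_R
 =\Omega_\lambda+\lambda w_R\theta_F+G_R .
\]
This is a convex combination of two Kähler forms plus a semipositive term, so it is Kähler, and $\phi_R$ is smooth since $e^{-R}>0$.

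\textbf{Step 2: differentiate in $R$.} Since $\partial_R\phi_R=-\lambda w_R$, the variation formula for \eqref{eq:mixed-energy} gives
\[
 \frac{d}{dR}E_p^\Gamma(\phi_R)=-\lambda\int_X w_R\,\Gamma\wedge\Omega_{\phi_R}^p .
\]
The slope is therefore $-\lambda\lim_{R\to\infty}\int_X w_R\Gamma\wedge\Omega_{\phi_R}^p$, provided this limit exists. Splitting $\Gamma$ into a difference of positive forms, we may take $\Gamma\geq0$ when estimating.

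$G_R$ is pointwise of rank one, so $G_R\wedge G_R=0$ and
\[
 \Omega_{\phi_R}^p=(\Omega_\lambda+\lambda w_R\theta_F)^p+p\,G_R\wedge(\Omega_\lambda+\lambda w_R\theta_F)^{p-1}.
\]
The first term is uniformly bounded. Because $w_R\to0$ off $F$ and $F$ has measure zero, dominated convergence kills $\int w_R\Gamma\wedge(\Omega_\lambda+\lambda w_R\theta_F)^p$.

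\textbf{Step 3: the concentrating term (main obstacle).} We must show
\[
 p\int_X w_R\,\Gamma\wedge G_R\wedge(\Omega_\lambda+\lambda w_R\theta_F)^{p-1}
 \longrightarrow p\lambda\int_0^1 w\;\Gamma\cdot F\cdot\bigl(\alpha-\lambda F+\lambda wF\bigr)^{p-1}\,dw .
\]
First we localize. Away from a neighborhood of $F$ the integrand is $O(e^{-R})$. Near a smooth point of $F$, take a local frame with $s_F=z_1\cdot e$; then $G_R$ is, up to $O(e^{-R/2})$-negligible errors, $\lambda\,i\,dw_R\wedge d^c$-type in the $z_1$ direction. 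More precisely, $G_R=\lambda\,\ddc\log(e^{-R}+|z_1|^2)$ plus smooth errors that vanish after multiplying by $w_R$ and integrating.

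Using $w_R$ as a coarea variable on each normal disc, the pushforward of $G_R$ fiberwise is $\lambda\,dw$ on $[0,1]$. Meanwhile the remaining smooth factors converge to their restrictions to $F$, with $w_R$ frozen as the parameter $w$. Singular points of $F$ and points where $F$ is non-reduced are handled by the fact that they form a set of zero $(n-1)$-dimensional measure in $F$, together with a uniform mass bound for $G_R\wedge(\text{bounded forms})$ on neighborhoods of them, since the total mass of $G_R$ is cohomologically controlled. Finally, $\theta_F|_F$ represents $F|_F$, which turns the limit into the stated intersection numbers. I expect making this localization rigorous near $F_{\sing}$ to be the main technical point. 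The first thing I would try is a standard cut-off argument using the capacity estimate for the mass of $\ddc\log(e^{-R}+|s|^2)$ near a codimension-two set.

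\textbf{Step 4: algebra.} Put $X(w):=\alpha-\lambda F+\lambda wF$, so that $X(1)=\alpha$ and $X'(w)=\lambda F$. Integration by parts in $w$ gives
\[
 \int_0^1 w\,p\lambda F\,X(w)^{p-1}\,dw=\alpha^p-\int_0^1X(w)^p\,dw .
\]
By the commutative identity $\int_0^1(y+s(x-y))^p\,ds=\frac1{p+1}\sum_{\ell}x^\ell y^{p-\ell}$, we have
\[
 \int_0^1X(w)^p\,dw=\frac1{p+1}\sum_{\ell=0}^p\alpha^\ell(\alpha-\lambda F)^{p-\ell}.
\]
Combined with the slope expression from Step 2, this is exactly \eqref{eq:energy-slope}. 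By l'Hôpital (or simply integrating the derivative), the limit of the derivative equals $\lim_{R\to\infty}E_p^\Gamma(\phi_R)/R$.

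As a consistency check, the case $p=1$ recovers the Dirichlet-energy computation $-\tfrac12\lambda^2\,\Gamma\cdot F$.
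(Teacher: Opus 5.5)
Your outline is sound and is essentially the paper's argument. Both use the same decomposition $\Omega_{\phi_R}=(1-a_R)\Omega_t+a_R\Omega_\lambda+G_R$ with $G_R$ of rank one, both differentiate in $R$, and both rest on the same key input: the weak limits $w_R^{j}S_R\rightharpoonup\frac1{j+1}[F]$. Here $S_R:=G_R/\lambda=-dw_R\wedge d^c\log|s_F|_h^2$, and the paper's $a_R$ is your $w_R$. The only organisational difference is how the $R$-derivative is taken. You differentiate the whole energy via the first-variation formula. The paper instead expands $E^\Gamma_p$ binomially into the pieces $I_k(R)=\int u_RT_R^k\wedge\Xi$, differentiates each one with an integration by parts, and treats $k=0$ separately. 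Your bookkeeping is a little shorter, and your Steps 1, 2 and 4 are correct. The paper does not prove the weak limits; it cites them from \cite{3d}.

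The problem is Step 3, which is exactly that imported ingredient, and your sketch of it is not right as written.
\begin{itemize}
\item \textbf{Multiplicities.} The lemma is stated for arbitrary effective integral divisors and is applied to non-reduced ones ($F=\sum b_iD_i$ and $F_k=\sum\lfloor ka_i\rfloor D_i$ in Lemma~\ref{lem:matrix-signs}). Along a component of multiplicity $b\geq2$, every point is a non-reduced point of $F$. So these points are not a null set in $F$, and your local model $s_F=z_1e$ is unavailable there. The correct model is $s_F=z_1^be$, and the fibrewise pushforward of $G_R$ is then $\lambda b\,dw$. This factor is what makes the limit $[F]$ rather than $[F_{\mathrm{red}}]$.
\item \textbf{The local approximation.} The claim that $G_R$ equals $\lambda\,dd^c\log(e^{-R}+|z_1|^2)$ up to errors that disappear after multiplying by $w_R$ is false. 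Write $|s_F|_h^2=|z_1|^2e^{-\varphi}$. Both forms concentrate at scale $e^{-R/2}$, but with profiles differing by the factor $e^{-\varphi(0,z')}$. The fibre integral of $w_R$ against $dd^c\log(e^{-R}+|z_1|^2)$ therefore depends on $\varphi(0,z')$ and is not $1/2$ in general. Only the exact relation $G_R=-\lambda\,dw_R\wedge d^c\log|s_F|_h^2$ gives the coarea statement you want.
\item \textbf{The singular locus.} The capacity argument near $F_{\sing}$ is left unproved.
\end{itemize}

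All three issues disappear with a global identity. Three facts are needed: $w_R$ is smooth and equals $1$ on $\operatorname{Supp}F$; $d^c\log|s_F|_h^2$ is an $L^1_{\mathrm{loc}}$ one-form; and $dd^c\log|s_F|_h^2=[F]-\theta_F$. The Leibniz rule for currents then gives
\[
(j+1)\,w_R^{j}S_R=[F]-w_R^{j+1}\theta_F-d\bigl(w_R^{j+1}\,d^c\log|s_F|_h^2\bigr).
\]
Since $0\le w_R\le1$ and $w_R\to0$ off $\operatorname{Supp}F$, dominated convergence sends the last two terms to $0$ in $L^1$, hence weakly. Therefore $w_R^{j}S_R\rightharpoonup\frac1{j+1}[F]$, multiplicities included, with no local analysis near $F_{\sing}$. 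Pairing with the smooth forms $\Gamma\wedge\theta_F^{j}\wedge\Omega_\lambda^{p-1-j}$ gives your Step 3 limit, and the rest of your proof goes through unchanged.
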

\begin{proof}
Set $a_R=e^{-R}/(e^{-R}+|s_F|_h^2)$ and
$S_R=-da_R\wedge d^c\log|s_F|_h^2$. Since
\[
 da_R=-a_R(1-a_R)d\log|s_F|_h^2,
\]
we have
\[
 S_R=a_R(1-a_R)d\log|s_F|_h^2\wedge
 d^c\log|s_F|_h^2\geq0.
\]
Thus $S_R$ is semipositive and has rank at most one; in particular,
$S_R^2=0$. A direct computation with the Poincar\'e--Lelong formula
$\ddc\log|s_F|_h^2=-\theta_F+[F]$ gives
\[
 \Omega_{\phi_R}
 =a_R\Omega_t+(1-a_R)\Omega_{\lambda}
  +\lambda S_R>0.
\]
Set $T_R:=\theta_F+\ddc u_R=a_R\theta_F+S_R$. We use the fact that for every $j\geq0$,
the weak convergence (see \cite[proof of Lemma 9]{3d})
\[
a_R^jS_R=-a_R^jda_R\wedge d^c\log|s_F|_h^2
\rightharpoonup\frac{1}{j+1}[F].
\]

Since $T_R=a_R\theta_F+S_R$ and $S_R^2=0$,
\[
 T_R^k=a_R^k\theta_F^k
       +k\,a_R^{k-1}S_R\wedge\theta_F^{k-1}.
\]

Note that $d u_R/dR=-a_R$ and $dT_R/dR=-\ddc a_R$. For a smooth
closed test form $\Xi$ and $k\geq1$, set
$I_k(R)=\int_X u_R T_R^k\wedge\Xi$. Differentiation and integration by
parts give
\begin{align*}
I_k'(R)
={}&\int_X-a_RT_R^k\wedge\Xi
 +k u_RT_R^{k-1}\wedge\ddc(-a_R)\wedge\Xi\\
={}&\int_X-a_RT_R^k\wedge\Xi
 -k a_R(T_R-\theta_F)T_R^{k-1}\wedge\Xi\\
={}&\int_X\Bigl[-(k+1)a_R^{k+1}\theta_F^k
 -k(k+1)a_R^kS_R\wedge\theta_F^{k-1}\\
&\hspace{38mm}
 +k a_R^k\theta_F^k
 +k(k-1)a_R^{k-1}S_R\wedge\theta_F^{k-1}\Bigr]\wedge\Xi.
\end{align*}
The smooth terms containing $a_R^k$ tend to zero by dominated
convergence theorem. The preceding weak limits therefore yield
\[
 \lim_{R\to\infty}I_k'(R)
 =-k\{F\}^k\cdot[\Xi]
  +(k-1)\{F\}^k\cdot[\Xi]
 =-\{F\}^k\cdot[\Xi].
\]
Therefore
\[
 \lim_{R\to\infty}\frac{I_k(R)}R
 =-\{F\}^k\cdot[\Xi].
\]
When $k=0$, one has $u_R/R\to0$ away from $F$ and $u_R/R=-1$ on
$F$. Since $F$ has zero measure against smooth top-degree forms and
$|u_R|/R\leq1$, dominated convergence gives the required zero limit.
Finally, $\Omega_{\phi_R}=\Omega_{\lambda}+\lambda T_R$.
Substituting into \eqref{eq:mixed-energy} and expanding binomially
gives \eqref{eq:energy-slope}.
\end{proof}

The preceding lemma yields the following proposition.
\begin{proposition}
\label{prop:analytic-polynomial}
Assume that $(\alpha,\beta)$ is $J$-nef. For every effective integral
divisor $F$ and every $\lambda>0$ such that
$\alpha-\lambda\{F\}$ is K\"ahler,
\begin{align}
0\leq\mathfrak W_F(\lambda)
:={}&
\sum_{j=2}^{n}(-1)^j
\frac{\binom nj}{n+1-j}\lambda^j
\bigl(\cab\alpha-(n+1-j)\beta\bigr)
\alpha^{n-j}F^{j-1}\notag\\
&+
\frac{(-1)^{n+1}\cab}{n+1}\lambda^{n+1}F^n.
\label{eq:analytic-polynomial}
\end{align}
\end{proposition}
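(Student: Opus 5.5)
The plan is to test the nonnegativity $\mathcal J_t\geq0$ of the $\mathcal J$-functional \eqref{eq:J-functional} on the logarithmic potentials $\phi_R$ of \eqref{eq:log-test}. We divide by $R$ and let $R\to\infty$ using Lemma~\ref{lem:log-slope}, and afterwards let $t\downarrow0$.

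\textbf{Step 1: the test inequality.} Fix $t>0$. $J$-nefness gives the solution $\Omega_t\in\alpha$ of $n\chi_t\wedge\Omega_t^{n-1}=c_t\Omega_t^n$, and hence $\mathcal J_t(\phi)\geq0$ for every smooth $\Omega_t$-K\"ahler potential $\phi$. The construction of $\phi_R$ only needs $\alpha-\lambda\{F\}$ to be K\"ahler, a condition independent of $t$. Lemma~\ref{lem:log-slope} says the $\phi_R$ are admissible, so $\mathcal J_t(\phi_R)\geq0$ for all $R$.

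\textbf{Step 2: the slope as $R\to\infty$.} Write
\[
B_p:=\alpha^p-\frac1{p+1}\sum_{\ell=0}^p\alpha^\ell(\alpha-\lambda F)^{p-\ell}.
\]
Apply \eqref{eq:energy-slope} with $\Gamma=\chi_t$, $p=n-1$, and with $\Gamma=1$, $p=n$. Since $[\chi_t]=\beta_t$, this gives
\[
0\leq\lim_{R\to\infty}\frac{\mathcal J_t(\phi_R)}R=\lambda\bigl(c_tB_n-n\beta_t B_{n-1}\bigr).
\]
The right side is a polynomial in $t$ with fixed intersection numbers as coefficients. Letting $t\downarrow0$ yields $\lambda(\cab B_n-n\beta B_{n-1})\geq0$.

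\textbf{Step 3: expanding the brackets.} Formally,
\[
\sum_\ell\alpha^\ell(\alpha-\lambda F)^{p-\ell}=\frac{\alpha^{p+1}-(\alpha-\lambda F)^{p+1}}{\lambda F}.
\]
To avoid dividing by $F$, I would verify this as an identity of polynomials in commuting classes, or just expand both sides directly. Binomial expansion of $(\alpha-\lambda F)^{p+1}$ then shows that the $j=1$ term cancels $\alpha^p$, leaving
\[
\lambda B_p=\sum_{j\geq2}\frac{\binom{p+1}{j}}{p+1}(-1)^j\lambda^jF^{j-1}\alpha^{p+1-j}.
\]
For $p=n$ the coefficient is $\binom{n+1}{j}/(n+1)=\binom nj/(n+1-j)$ when $j\leq n$, and it equals $1/(n+1)$ at $j=n+1$. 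For $p=n-1$, the coefficient multiplied by $n$ is $\binom nj$. Grouping the $j$-th terms gives
\[
\frac{\binom nj}{n+1-j}\bigl(\cab\alpha-(n+1-j)\beta\bigr)\alpha^{n-j}F^{j-1}
\]
for $2\leq j\leq n$, together with the leftover $\frac{(-1)^{n+1}\cab}{n+1}\lambda^{n+1}F^n$. This is exactly $\mathfrak W_F(\lambda)$.

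\textbf{Main obstacle.} The analytic input is already in Lemma~\ref{lem:log-slope}, so the only delicate point is the interchange of limits. We need $\lim_R \mathcal J_t(\phi_R)/R$ to exist for each fixed $t$, which the lemma gives termwise. The $t\to0$ limit is then harmless because the limiting expression is cohomological and continuous in $t$. I would also double-check the signs and binomial bookkeeping in Step 3, since that is where errors are most likely.
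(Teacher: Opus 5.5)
Your proposal is correct and follows the paper's proof essentially step for step. You test $\mathcal J_t(\phi_R)\ge 0$ on the logarithmic potentials, take the $R$-slope via Lemma~\ref{lem:log-slope} to get $\lambda(c_tB_n-n\beta_tB_{n-1})\ge 0$, let $t\downarrow 0$, and then expand $B_p$ with the geometric-sum identity; your coefficient bookkeeping (in particular $\binom{n+1}{j}/(n+1)=\binom nj/(n+1-j)$ and the factor $n\binom nj/n=\binom nj$ on the $\beta$-term) reproduces $\mathfrak W_F(\lambda)$ exactly as in the paper.
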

\begin{proof}
We have $\mathcal J_t(\phi_R)\geq0$. Divide by $R$, use
Lemma~\ref{lem:log-slope}, let $R\to\infty$, and then let
$t\downarrow0$. We obtain
\begin{equation}\label{eq:B-polynomial}
 0\leq
 \lambda\bigl[\cab B_n(\alpha,\lambda F)
             -n\beta B_{n-1}(\alpha,\lambda F)\bigr],
\end{equation}
where
\[
 B_p(\alpha,D)
 :=
 \alpha^p-\frac1{p+1}
 \sum_{\ell=0}^p\alpha^\ell(\alpha-D)^{p-\ell}.
\]
The elementary identity
\[
\begin{aligned}
 \sum_{\ell=0}^p x^\ell y^{p-\ell}
 &=\frac{x^{p+1}-y^{p+1}}{x-y}\\
 &=\sum_{k=1}^{p+1}(-1)^{k+1}\binom{p+1}{k}
   x^{p+1-k}(x-y)^{k-1}\\
 &=(p+1)x^p+
   \sum_{k=1}^{p}(-1)^k\binom{p+1}{k+1}x^{p-k}(x-y)^k
\end{aligned}
\]
gives
\[
 B_p(\alpha,D)
 =\sum_{k=1}^p(-1)^{k+1}
 \frac{\binom pk}{k+1}\alpha^{p-k}D^k.
\]
This turns \eqref{eq:B-polynomial} into
\eqref{eq:analytic-polynomial}; the relation
\[
 n\binom{n-1}{k}=(n-k)\binom nk
\]
gives the coefficient of $\beta$.
\end{proof}

Let $D_1,\ldots,D_N$ be the null divisors from
Lemma~\ref{lem:finite-face}, and set
\begin{equation}\label{eq:q-r}
 q_2:=(\cab\alpha-(n-2)\beta)\alpha^{n-3},
 \qquad
 q_3:=(\cab\alpha-(n-3)\beta)\alpha^{n-4}.
\end{equation}
Define the symmetric matrix
\begin{equation}\label{eq:M}
 M_{ij}:=q_2\cdot D_i\cdot D_j.
\end{equation}

\begin{lemma}
\label{lem:matrix-signs}
Assume that $n\geq4$ and $(\alpha,\beta)$ is $J$-nef. Then
\begin{enumerate}[label=\textup{(\roman*)}]
\item $M_{ij}\geq0$ for $i\ne j$;
\item $M$ is negative semidefinite;
\item if $M$ has a positive vector $a=(a_i)$ with $Ma=0$, then
      \begin{equation}\label{eq:cubic-sign}
       q_3\cdot E^3\geq0, \qquad \text{where }E=\sum_i a_iD_i.
      \end{equation}
\end{enumerate}
\end{lemma}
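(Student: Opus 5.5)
The plan is to read all three assertions off the polynomial inequality $\mathfrak W_F(\lambda)\geq0$ of Proposition~\ref{prop:analytic-polynomial}, applied to combinations $F$ of the null divisors, with assertion (i) coming directly from $J$-nefness.

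For (i), take $i\neq j$. The prime divisors $D_i,D_j$ meet properly, so the cycle $D_i\cdot D_j$ is $\sum_k m_kW_k$ with $m_k>0$, where the $W_k$ are irreducible of dimension $n-2$. Each $W_k$ is a proper positive-dimensional subvariety, since $n\geq4$. Hence $q_2\cdot W_k=J_{\cab}(W_k,\alpha,\beta)\geq0$ by $J$-nefness, and therefore $M_{ij}\geq0$.

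For (ii), first fix a nonnegative vector $x$ and consider $F=\sum_i x_iD_i$.
\begin{itemize}
\item Assume first that $x$ is rational, clear denominators, and apply Proposition~\ref{prop:analytic-polynomial} to the effective integral divisor $mF$ with $\lambda$ small. Since $\mathfrak W$ depends only on $\lambda mF$, we may just write $\lambda F$.
\item Expand in $\lambda$. The $j=2$ coefficient is $\frac{\binom n2}{n-1}(\cab\alpha-(n-1)\beta)\alpha^{n-2}\cdot F=\frac n2\,z\cdot F$, which vanishes because every $D_i$ is null.
\item The $j=3$ coefficient is $-\frac{\binom n3}{n-2}\,q_2\cdot F^2=-\frac{n(n-1)}6\,x^TMx$.
\item Dividing $\mathfrak W_F(\lambda)\geq0$ by $\lambda^3$ and letting $\lambda\downarrow0$ gives $x^TMx\leq0$.
\item Both sides are polynomial in $x$, so the inequality extends by density to all real $x\geq0$.
\end{itemize}
For an arbitrary real vector $x$, the nonnegativity of the off-diagonal entries from (i) gives
\[
x^TMx=\sum_i M_{ii}x_i^2+\sum_{i\ne j}M_{ij}x_ix_j\leq |x|^TM|x|\leq 0 .
\]
This proves negative semidefiniteness.

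For (iii), let $a>0$ with $Ma=0$ and $E=\sum a_iD_i$. Then $z\cdot E=0$ and $q_2\cdot E^2=a^TMa=0$, so for this $F$ the $\lambda^2$ and $\lambda^3$ terms of $\mathfrak W$ vanish. The leading surviving term is $j=4$, which exists because $n\geq4$. Its coefficient is $\frac{\binom n4}{n-3}\,q_3\cdot E^3$; the extra term $\lambda^{n+1}F^n$ has order at least $5$. Thus $\lambda^{-4}\mathfrak W_E(\lambda)\to\frac{\binom n4}{n-3}q_3\cdot E^3$, which gives \eqref{eq:cubic-sign}.

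The main obstacle is that $a$ may be irrational, while Proposition~\ref{prop:analytic-polynomial} is stated only for integral divisors. I would handle it as follows.
\begin{itemize}
\item Approximate $a$ by positive rational vectors $a^{(k)}\to a$, and let $E_k=\sum_i a^{(k)}_iD_i$.
\item For each fixed small $\lambda$, note that $\alpha-\lambda E_k$ stays Kähler uniformly, because the Kähler cone is open. Apply the proposition to $\lambda E_k$, rescaled from an integral multiple.
\item Let $k\to\infty$ with $\lambda$ fixed. The polynomial $\mathfrak W$ is continuous in the coefficient vector, so $\mathfrak W_E(\lambda)\geq0$ for all small $\lambda>0$.
\item Only then divide by $\lambda^4$ and let $\lambda\downarrow0$.
\end{itemize}
The order of limits matters here: for the approximants $E_k$, the $\lambda^3$ term need not vanish, so we must pass to the limit in $k$ before sending $\lambda$ to zero.
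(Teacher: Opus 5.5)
Your proposal is correct and follows the paper's proof essentially step for step. Part (i) uses $J$-nefness on the components of $D_i\cap D_j$; part (ii) uses the vanishing $\lambda^2$-coefficient, the sign of the $\lambda^3$-coefficient, density, and the $|x|$ trick; and in part (iii) your rational approximation at fixed $\lambda$, taken before $\lambda\downarrow0$, is the same device as the paper's $F_k=\sum_i\lfloor ka_i\rfloor D_i$ with $\lambda=s/k$ followed by multiplication by $k$. One harmless imprecision: $\mathfrak W_F(\lambda)=\lambda\,\Phi(\lambda F)$ for a polynomial $\Phi$, so only the sign of $\mathfrak W$, not its value, depends solely on $\lambda F$, and the sign is all you use.
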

\begin{proof}
For $i\ne j$, the intersection $D_iD_j$ is an effective
codimension-two cycle, so $M_{ij}\geq0$ by $J$-nefness.

First let $F=\sum b_iD_i$ have positive integral coefficients. Since
every $D_i$ is null, the coefficient of $\lambda^2$ in
\eqref{eq:analytic-polynomial} is
\[
 \frac n2\,z\cdot F=0, \qquad \text{where }z:=\cab\alpha^{n-1}-(n-1)\beta\alpha^{n-2}.
\]
The coefficient of $\lambda^3$ is
\[
 -\frac{n(n-1)}6\,q_2F^2.
\]
The polynomial is nonnegative for all sufficiently small positive
$\lambda$, so $q_2F^2\leq0$. Clearing denominators, then  using that $\mathbb{Q}_{\geq0}\subset \mathbb{R}_{\geq0} $ is dense,  gives $b^\top M b\leq0$ for every $b\geq0$. For an
arbitrary real vector $x$,
the nonnegative off-diagonal entries imply
\[
 x^\top Mx\leq |x|^\top M|x|\leq0.
\]
This proves~\textup{(ii)}.

For~\textup{(iii)}, choose integral vectors $m^{(k)}$ with
$m_i^{(k)}=\lfloor ka_i\rfloor$, and set
\[
 F_k=\sum_i m_i^{(k)}D_i=kE+\Delta_k,
\]
where the coefficients of $\Delta_k$ remain uniformly bounded.  Since
$q_2\cdot E\cdot D_i=\sum_j a_j(q_2\cdot D_j\cdot D_i)
=\sum_j a_jM_{ij}=(Ma)_i=0$, one has
\[
 q_2F_k^2=q_2(kE+\Delta_k)^2=q_2\Delta_k^2=O(1).
\]
Choose $s>0$ so small that $\alpha-sE$ is K\"ahler.  Then
$\alpha-(s/k)F_k$ is K\"ahler for all large $k$, and
Proposition~\ref{prop:analytic-polynomial} applies with
$\lambda=s/k$.
The term with $\lambda^2$ in $\mathfrak{W}_{F_k}(\lambda)$ is
$\frac{n}{2}\lambda^2z\cdot F_k=0$. The term with $\lambda^3$ is
\[
-\frac
{n(n-1)}{6}\lambda^3q_2F_k^2=-\frac
{n(n-1)}{6}\left(\frac{s}{k}\right)^3q_2F_k^2.
\]
Multiply the resulting inequality by $k$ and let $k\to\infty$. The
$\lambda^3$-term then converges to zero.

For $j\geq4$,
\[
F_k^{j-1}
=(kE+\Delta_k)^{j-1}
=k^{j-1}E^{j-1}+O(k^{j-2}),
\]
because the coefficients of $\Delta_k$ are bounded. Hence
\[
\begin{aligned}
k\left(\frac{s}{k}\right)^jF_k^{j-1}
&=\frac{s^j}{k^{j-1}}
 \left(k^{j-1}E^{j-1}+O(k^{j-2})\right)\\
&=s^jE^{j-1}+O(k^{-1}).
\end{aligned}
\]
Similarly,
\[
k\left(\frac{s}{k}\right)^{n+1}F_k^n
=s^{n+1}E^n+O(k^{-1}).
\]
Therefore, for every fixed sufficiently small $s>0$,
\[
k\,\mathfrak W_{F_k}(s/k)\longrightarrow\mathcal P_E(s)
\qquad(k\to\infty),
\]
where
\[
\begin{aligned}
\mathcal P_E(s)
={}&\sum_{j=4}^{n}(-1)^j
\frac{\binom nj}{n+1-j}s^j
\bigl(\cab\alpha-(n+1-j)\beta\bigr)
\alpha^{n-j}E^{j-1}
+\frac{(-1)^{n+1}\cab}{n+1}s^{n+1}E^n.
\end{aligned}
\]
Each $\mathfrak W_{F_k}(s/k)$ is nonnegative, and $k>0$, so
\[
\mathcal P_E(s)\geq0 \qquad \text{for every sufficiently small }s>0.
\]
As $s\downarrow0$,
\[
 \mathcal P_E(s)
 =\frac{\binom{n}{4}}{n-3}s^4q_3\cdot E^3+O(s^5).
\]
Dividing by $s^4$ and letting $s\downarrow0$ proves
\eqref{eq:cubic-sign}.

\end{proof}

\subsection{The key tool: a generalized Khovanskii-Teissier inequality for \texorpdfstring{$J$}{J}-equation}
To find a direction that strictly increases the $J$-slope, we must
show that $(M_{ij})$ is negative definite. In dimension three, the key
input is the Hodge index theorem \cite{3d}; in higher dimensions, we
use the following analogue for $J$-equation. The smooth case is a special case of the generalized Khovanskii-Teissier inequality proved by Collins \cite{CT21}.  We record a direct proof for the $J$-equation here since it provide the clear idea for extending it to singular spaces. Since it's an analogue of the Hodge index theorem, we call it $J$-Hodge type inequality in the following. 
\begin{proposition}[Smooth $J$-Hodge inequality]
\label{prop:smooth-strict-hodge}
Let $Y$ be a compact K\"ahler $m$-fold, $m\geq3$, and let $A,B$ be K\"ahler
forms satisfying $cA^{m}=mA^{m-1}B$.  Put
\[
 L=cA^{m-1}-(m-1)B\wedge A^{m-2},\qquad
 Q=cA^{m-2}-(m-2)B\wedge A^{m-3}.
\]
For $\gamma\in H^{1,1}(Y,\R)$, if
$\int_YL\wedge\gamma=0$, then
$\int_YQ\wedge\gamma^2\leq0$, with equality only when
$\gamma=0$.
\end{proposition}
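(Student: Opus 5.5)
The plan is to reduce the statement to pointwise linear algebra. First I pick a good representative of $\gamma$, using the linearized $J$-operator at $A$. Then I verify a pointwise Hodge--Riemann type inequality, where the $J$-equation $cA^m=mA^{m-1}B$ does the essential work.

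\emph{Step 1: pointwise positivity.} At each point I diagonalize: $A=\sum_i e_i$ and $B=\sum_i\lambda_i e_i$ with $\lambda_i>0$. The equation $cA^m=mA^{m-1}\wedge B$ says exactly $\operatorname{tr}_AB=\sum_i\lambda_i=c$. A direct coefficient computation gives two facts. First, $L$ has coefficient $c-\sum_{j\ne i}\lambda_j=\lambda_i>0$ on the $(m-1,m-1)$-basis element omitting $i$. Second, $Q$ has coefficient $c-\sum_{k\ne i,j}\lambda_k=\lambda_i+\lambda_j>0$ on the element omitting $\{i,j\}$. In particular $L$ and $Q$ are strictly positive forms.

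\emph{Step 2: choice of representative.} Since $A$ and $B$ are closed, $L$ is a closed strictly positive $(m-1,m-1)$-form. Fix a smooth representative $\gamma_0$ of $\gamma$ and consider the operator $f\mapsto L\wedge\ddc f/A^m$. Positivity of $L$ makes it a second-order elliptic operator. Closedness of $L$ makes it formally self-adjoint with respect to $A^m$: indeed $\int g\,L\wedge\ddc f=\int f\,L\wedge\ddc g$. Its kernel consists of the constants, by the maximum principle. By Fredholm theory, the equation $L\wedge\ddc f=-L\wedge\gamma_0$ is solvable precisely when $\int_YL\wedge\gamma_0=0$, which is our hypothesis. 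Setting $\gamma':=\gamma_0+\ddc f$, I obtain a representative of $\gamma$ with $L\wedge\gamma'=0$ pointwise.

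\emph{Step 3: pointwise inequality.} I write $\gamma'=\sum\gamma_{i\bar j}$ in the frame of Step 1. The pointwise condition $L\wedge\gamma'=0$ reads $\sum_i\lambda_i\gamma_{i\bar i}=0$. Up to a positive normalizing constant,
\[
 \frac{Q\wedge\gamma'^2}{A^m}\;\propto\;\sum_{i<j}(\lambda_i+\lambda_j)\bigl(\gamma_{i\bar i}\gamma_{j\bar j}-|\gamma_{i\bar j}|^2\bigr).
\]
The off-diagonal terms are $\leq0$, and they vanish only if $\gamma_{i\bar j}=0$ for all $i\ne j$. With $x_i=\gamma_{i\bar i}$, the diagonal part is
\[
\sum_{i<j}(\lambda_i+\lambda_j)x_ix_j=\Bigl(\sum_jx_j\Bigr)\Bigl(\sum_i\lambda_ix_i\Bigr)-\sum_i\lambda_ix_i^2=-\sum_i\lambda_ix_i^2,
\]
using the constraint $\sum_i\lambda_ix_i=0$. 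Hence $Q\wedge\gamma'^2\leq0$ pointwise, with equality only where $\gamma'=0$.

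\emph{Step 4: conclusion.} Integrating gives $\int_YQ\wedge\gamma^2=\int_YQ\wedge\gamma'^2\leq0$, since $Q$ is closed and the integral depends only on the class. If equality holds, then $\gamma'\equiv0$ pointwise, so $\gamma=0$ in cohomology.

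The main obstacle is Step 3. There the identity $\operatorname{tr}_AB=c$ is essential: it makes the coefficients of $L$ equal to $\lambda_i$, and this turns the quadratic form into $-\sum_i\lambda_ix_i^2$. Mere cone positivity would not visibly suffice. Step 2 is routine once one notices that $L$ is closed, because closedness is what makes the solvability condition exactly the hypothesis $\int_YL\wedge\gamma=0$.
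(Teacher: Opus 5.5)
Your proposal is correct and is essentially the paper's proof: you make the same Fredholm choice of a representative $G$ with $G\wedge L=0$ (equivalently $\tr_A(BG)=0$), and you prove the same pointwise identity $G^2\wedge Q=-\binom{m}{2}^{-1}\tr_A(BG^2)\,A^m$. The only difference is that you obtain this identity by direct coefficient expansion (your $\sum_i\lambda_i x_i^2+\sum_{i<j}(\lambda_i+\lambda_j)|\gamma_{i\bar j}|^2$ is exactly $\tr_A(BG^2)$), whereas the paper reads it off from the $u^2$-coefficient of $\det(I+uG)\bigl[c-\tr\bigl((I+uG)^{-1}B\bigr)\bigr]$.
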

\begin{proof}
At a fixed point take $A$-unitary coordinates and diagonalize
$B=\sum_i\lambda_i e_i$, where $e_i=\sqrt{-1}\,dz^i\wedge
d\bar z^i$. Since $\sum_i\lambda_i=c$, writing
$\widehat e_i=\bigwedge_{j\ne i}e_j$ gives
$L=(m-1)!\sum_i\lambda_i\widehat e_i>0$.

Choose a smooth closed representative $H$ of $\gamma$.  The
operator $\mathcal Dv=(\ddc v\wedge L)/A^m$ is uniformly
elliptic.  Since $d L=0$, integration by parts gives
$\int u\,\ddc v\wedge L=\int v\,\ddc u\wedge L$; hence
$\mathcal D$ is self-adjoint and its kernel consists of constants.
The hypothesis $\int_YH\wedge L=0$ is exactly the Fredholm
compatibility condition, so there is a smooth $v$, unique modulo a
constant, such that $G:=H+\ddc v$ satisfies $G\wedge L=0$.

From the elementary identity \[
m(m-1)M\wedge N\wedge A^{m-2}=[\tr_{A}M\tr_A N-\tr_A(MN)]A^{m}, \quad \forall \text{ real (1,1)-form }M,N
\] where $\tr_A(MN)=\tr((A^{-1}M)(A^{-1}N))$, we get
\[
 G\wedge L
 =\frac1m\{(c-\tr_A B)\tr_A G+\tr_A(BG)\}A^m
 =\frac1m\tr_A(BG)A^m.
\]
  Thus
$G\wedge L=0$ is precisely $\tr_A(BG)=0$.

There is also a direct sign calculation.  In the above $A$-unitary coordinates,
for every $u\ne0$ with $I+uG>0$,  set
\[
 F(u)=\det(I+uG)\bigl[c-\tr((I+uG)^{-1}B)\bigr].
\]
Since  $I-(I+uG)^{-1}=uG(I+uG)^{-1}=u[G-uG^2(I+uG)^{-1}]$, we obtain the exact factorization
\[
 F(u)=-u^2\det(I+uG)\,
 \tr\bigl(BG^2(I+uG)^{-1}\bigr)\leq0.
\]
Indeed, $G^2(I+uG)^{-1}\geq0$, and the trace is strictly positive
unless $G=0$.  On the other hand,
\[
 F(u)A^m=c(A+uG)^m-mB\wedge(A+uG)^{m-1}.
\]
Comparing the coefficients of $u^2$ gives the pointwise identity
\[
 G^2\wedge Q
 =-\binom{m}{2}^{-1}\tr_A(BG^2)A^m\leq0,
 \qquad
 \tr_A(BG^2)=\sum_{i,j}\lambda_i|G_{i\bar j}|^2.
\]
Since $G$ represents $\gamma$, integration yields the desired
inequality.  If equality holds, the last nonnegative density vanishes,
so $G=0$ everywhere and $\gamma=[G]=0$.
\end{proof}
\begin{remark}[The surface case and the Hodge index theorem]
For $m=2$, the form $Q$ in the preceding proposition is the positive
constant $c$. If
$B=\lambda_1e_1+\lambda_2e_2$ in $A$-unitary coordinates, then
$c=\lambda_1+\lambda_2$ and
\[
 \Lambda=cA-B=\lambda_2e_1+\lambda_1e_2>0.
\]
Thus $\Lambda$ is a K\"ahler form.  The conclusion of
Proposition~\ref{prop:smooth-strict-hodge} becomes
\[
 \int_Y\Lambda\wedge\gamma=0
 \quad\Longrightarrow\quad
 c\int_Y\gamma^2\leq0,
\]
with equality only for $\gamma=0$.  Since $c>0$, this is exactly
the classical Hodge index theorem for the K\"ahler class
$[\Lambda]$.  In this sense, the $J$-Hodge inequality is the
higher-dimensional weighted of the Hodge index theorem.
\end{remark}

We next extend the $J$-Hodge inequality to the possibly singular null
divisors. A null divisor need not be normal, so let
\[
 h:Y:=D^\nu\longrightarrow D\hookrightarrow X
\]
be its normalization; $h$ is finite. Put $m=n-1$,
\[
 a=h^*\alpha,\qquad b=h^*\beta.
\]
These are K\"ahler classes on the normal compact K\"ahler space $Y$
\cite[Proposition~3.5]{GK20}. Nullity of $D$ gives
\begin{equation}\label{eq:normalized-slope}
 \cab a^m-m\,b\cdot a^{m-1}=0,
\end{equation}
so the $m$-dimensional $J$-slope of $(a,b)$ is $\cab$.
Let $\gamma$ be a class on $Y$ represented by the restriction of a
smooth ambient closed real $(1,1)$-form. Define
\begin{align}
 L_D(\gamma)
&:=\int_Y
 \bigl(\cab a^{m-1}
 -(m-1)b\cdot a^{m-2}\bigr)\gamma,
\label{eq:L-D}\\
 Q_D(\gamma)
&:=\int_Y
 \bigl(\cab a^{m-2}
 -(m-2)b\cdot a^{m-3}\bigr)\gamma^2.
\label{eq:Q-D}
\end{align}
Our goal is to prove the following proposition.
\begin{proposition}[$J$-Hodge inequality]
    \label{prop:strict-hodge}
One has
\[
 L_D(\gamma)=0\quad\Longrightarrow\quad Q_D(\gamma)\leq0,
\]
and equality holds only if $\gamma=0$ in
$H^{1,1}(Y,\R)$.
\end{proposition}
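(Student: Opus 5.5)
Our plan is to reduce the statement to the smooth $J$-Hodge inequality (Proposition~\ref{prop:smooth-strict-hodge}) on a log resolution of $Y$, and then pass to the limit.

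\textbf{Resolution and approximating classes.} Let $\pi:\widetilde Y\to Y$ be a log resolution of the normal space $Y$, obtained by blowups along smooth centers. Fix a smooth $\pi$-exceptional class $\theta_E=\{E\}$ with $E\geq0$ exceptional, such that $\pi^*a-\eta\theta_E$ is Kähler for small $\eta>0$, as in Lemma~\ref{lem:birational-certificate}. Since $m=n-1\geq3$, we work with the Kähler classes
\[
 a_\eta=\pi^*a-\eta\{E\},\qquad b_\eta=\pi^*b+\eta\{\widetilde\omega\}.
\]
Here $\widetilde\omega$ is a fixed Kähler form on $\widetilde Y$. Set $c_\eta=m\,b_\eta a_\eta^{m-1}/a_\eta^m$. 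Because $\pi^*\gamma\cdot\{E\}$-type terms are exceptional, and the ambient-induced class $\gamma$ pulls back, the projection formula gives $c_\eta\to\cab$ by \eqref{eq:normalized-slope}. Likewise the $\eta$-versions $L_\eta,Q_\eta$ of the intersection numbers converge to $L_D,Q_D$.

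\textbf{Solving the $J$-equation on $\widetilde Y$.} On $\widetilde Y$ we need a solution $A_\eta\in a_\eta$ of $c_\eta A_\eta^m=mA_\eta^{m-1}\wedge B_\eta$ with $B_\eta\in b_\eta$ Kähler. Since $b_\eta$ may be freely enlarged, we choose $B_\eta$ so that the numerical criterion of \cite[Theorem~1.1]{C21} holds. The perturbation by $\eta\widetilde\omega$ plays the same role as $\chi_t=\chi+t\omega$ in Section~3, and $J$-nefness of $(a,b)$ on $Y$ follows from ambient $J$-nefness, via Lemma~\ref{lem:only-divisors}: the ambient inequalities in dimension $\leq n-2$ are strict.

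\textbf{The inequality.} Adjust $\gamma_\eta:=\pi^*\gamma+s_\eta\{A_\eta\}$ so that $L_\eta(\gamma_\eta)=0$. Here $s_\eta\to0$, since $L_\eta(\pi^*\gamma)\to L_D(\gamma)=0$ and $L_\eta(a_\eta)$ is bounded below by a positive constant. Proposition~\ref{prop:smooth-strict-hodge} then gives $Q_\eta(\gamma_\eta)\leq0$, and letting $\eta\to0$ yields $Q_D(\gamma)\leq0$.

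\textbf{Equality case (main obstacle).} The inequality passes to the limit, but strictness does not. For the equality case we use the quantitative form from the proof of Proposition~\ref{prop:smooth-strict-hodge}:
\[
 -Q_\eta(\gamma_\eta)=\binom m2^{-1}\int_{\widetilde Y}\tr_{A_\eta}(B_\eta G_\eta^2)A_\eta^m,
\]
where $G_\eta=H_\eta+\ddc v_\eta$ is the harmonic-type representative. We would need to bound this from below by $\int|G_\eta|^2_{A_\eta}A_\eta^m$ on compact subsets $K\Subset\widetilde Y\setminus\operatorname{Exc}(\pi)$, with $A_\eta$ uniformly equivalent to a fixed metric on $K$. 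This requires uniform estimates for $A_\eta$: an $L^\infty$/oscillation bound via Guo--Phong \cite{GP24}, then a family version of the $C^2$ estimate away from the exceptional locus (as in \cite{T23}), and local $C^\infty$ bounds.

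Suppose $Q_D(\gamma)=0$. Then $G_\eta\to0$ in $L^2_{\mathrm{loc}}$ off the exceptional set. Hence $\gamma$ is represented on $Y_{\reg}$ by a current $\ddc$-cohomologous to zero, which we would show is supported on $\operatorname{Exc}(\pi)$. Its pushforward to $Y$ then vanishes, and because $\gamma$ is pulled back from $Y$, we get $\gamma=0$ in $H^{1,1}(Y,\R)$.

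The delicate point is controlling $v_\eta$ uniformly, since ellipticity of $\mathcal D_\eta$ degenerates near $E$. We would first try to use the $\ddc$-lemma on $\widetilde Y$ together with weak compactness of the closed currents $G_\eta$, whose mass is bounded by the cohomology classes. The limit current $G$ would satisfy $G=0$ off $\operatorname{Exc}(\pi)$, so $G$ is a combination of exceptional divisors. That combination must be zero, because $\pi^*\gamma\cdot(\text{curves in fibers})=0$ and exceptional divisor combinations are detected by fiber curves (negativity lemma).
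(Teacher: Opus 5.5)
Your overall architecture is the paper's: resolve, approximate by solvable $J$-equations, use the pointwise identity from Proposition~\ref{prop:smooth-strict-hodge}, then use Guo--Phong and a $C^2$ bound off the exceptional set for the equality case. Two steps, however, fail as you describe them.

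\textbf{First, the approximating equations.} With $a_\eta=\pi^*a-\eta\{E\}$ and $b_\eta=\pi^*b+\eta\{\widetilde\omega\}$ for an arbitrary K\"ahler $\widetilde\omega$, the numerical criterion can fail. Take $V\subset\widetilde Y$ of dimension $p$ with $\pi(V)$ a point. Then $\pi^*a|_V=\pi^*b|_V=0$, $-E|_V$ is K\"ahler, and
\[
 \int_V\bigl(c_\eta a_\eta^p-p\,b_\eta a_\eta^{p-1}\bigr)
 =\eta^p\Bigl(c_\eta(-E|_V)^p-p\,\widetilde\omega\cdot(-E|_V)^{p-1}\Bigr).
\]
This is negative for small $\eta$ as soon as $\widetilde\omega$ is large compared with $-E|_V$. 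The analogy with $\chi_t=\chi+t\omega$ in Section~3 breaks because there the perturbation is along the class $\alpha$ itself. ``Enlarging $b$'' in any other direction can only hurt on subvarieties where $a_\eta$ has size $\eta$. So even the non-strict inequality is not yet justified, since it needs these solutions to exist.

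More importantly, Lemma~\ref{lem:uniform-oscillation} and Theorem~\ref{thm:normal-C2} require a subsolution whose cone margin is uniform as the parameter tends to $0$. A numerical criterion applied at each fixed parameter gives nothing of the sort. The missing idea is to use Lemma~\ref{lem:only-divisors} pointwise rather than numerically. For any $(m-1)$-plane in $T_yY_{\reg}$, its image under $dh$ has dimension at most $n-2$. Completing it to an $(n-1)$-plane therefore adds at least one direction, each worth at least $m_0$, and this gives $P_{B_D}(A_D)\le\cab-\eta_D$ on $Y$ (Lemma~\ref{lem:normal-subsolution}).

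On the resolution, the paper then perturbs both classes by the \emph{same} form: $\pi^*A_D+t\kappa$ and $\pi^*B_D+\lambda t\kappa$ with $\lambda<\eta_D/(4(m-1))$. The degenerate directions then cost at most $(m-1)\lambda$ of the margin. This, together with the weighted lower bound \eqref{eq:B-weighted-lower} on the degeneration of $\pi^*B_D$ (needed because $\tr_{B_t}$ enters the $C^2$ computation), is what makes the estimates uniform.

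\textbf{Second, the global step in the equality case.} The forms $G_\eta=H_\eta+\ddc v_\eta$ have no sign, so their mass is not bounded by their cohomology class. You also have no control of $v_\eta$ near the exceptional set. Hence there is no weak compactness and no limit current $G$ to which a support argument could be applied.

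What the $L^2_{\mathrm{loc}}$ convergence actually gives is local and cohomological. Pair $G_\eta$ with closed, compactly supported $(2m-2)$-forms $\Psi$ on $\pi^{-1}(Y_{\reg})$. Stokes kills $\ddc v_\eta$, and $s_\eta\int A_\eta\wedge\Psi\to0$ by the local bound. So $\int H\wedge\Psi=0$ for a smooth representative $H$ of $\pi^*\gamma$. Poincar\'e duality with compact supports then gives $\pi^*\gamma|_{\pi^{-1}(Y_{\reg})}=0$.

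The passage to a divisorial statement is purely topological. Lemma~\ref{lem:support-decomposition} gives $H=\sum_ic_i[E_i]+dR$, with $E_i$ the exceptional components. From there your negativity-lemma ending is a valid alternative to the paper's. The class $\sum_ic_i\{E_i\}=\pi^*\gamma$ vanishes on $\pi$-contracted curves, so the negativity lemma for the projective morphism $\pi$ forces all $c_i=0$. You then still need injectivity of $\pi^*$ on $H^{1,1}(Y,\R)$ to get $\gamma=0$ on $Y$. The paper instead pushes forward, using $\pi_*[E_i]=0$ (because $\operatorname{codim}\pi(E_i)\ge2$ by normality) and \cite{BH69}. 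That yields $\gamma=0$ on $Y$ directly, and the same argument gives the injectivity.
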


Since $Y$ is a singular normal K\"ahler space, we approximate these
data on a resolution.

\begin{lemma}
\label{lem:normal-subsolution}
There exist smooth K\"ahler forms $A_D\in a$ and $B_D\in b$ on $Y$,
and a number $\eta_D>0$, such that
\begin{equation}\label{eq:normal-strict}
 P_{B_D}(A_D)\leq\cab-\eta_D
 \qquad\text{on }Y_{\reg}.
\end{equation}
\end{lemma}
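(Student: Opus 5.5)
The plan is to take $B_D:=h^*\chi$ and to build $A_D$ as a small convex perturbation of $h^*\omega$, where $\omega,\chi$ are the forms in the boundary condition \eqref{eq:intro-boundary}. The margin $\eta_D$ will come from the strict lower-dimensional inequality of Lemma~\ref{lem:only-divisors}(ii) with $p=n-2=m-1$:
\[
 \cab\omega^{n-2}-(n-2)\chi\wedge\omega^{n-3}\geq m_0\,\omega^{n-2}.
\]
By the trace identity used in the proof of Lemma~\ref{lem:easy-direction-inclusion}, this says the following. For every complex subspace $W\subset T_xX$ of dimension $n-2$, one has $\tr_{\omega|W}(\chi|W)\leq\cab-m_0$. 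Since $\chi>0$, the same bound holds for every subspace of dimension $\leq n-2$, because a subspace's trace is at most that of any $(n-2)$-plane containing it.

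The point is that the $(m-1)$-planes appearing in $P_{B_D}(A_D)$ on $Y$ have dimension $m-1=n-2$. Their images under $dh$ therefore fall exactly into the range where this ambient inequality is strict.

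The difficulty is that $h^*\omega$ need not be a Kähler form on $Y$. At points of $Y_{\reg}$ lying over $D_{\sing}$ (cusps, pinched points), $dh$ may fail to be injective. So I would first fix, using \cite[Proposition~3.5]{GK20}, some Kähler form $\omega_a\in a$ on $Y$, and set
\[
 A_D:=(1-\epsilon)h^*\omega+\epsilon\,\omega_a\in a,\qquad B_D:=h^*\chi\in b .
\]
Here $h^*\omega$ and $h^*\chi$ are smooth in the sense of normal spaces, being pullbacks of ambient smooth forms via local embeddings. Then $A_D$ is Kähler for every $0<\epsilon\leq 1$.

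The pointwise estimate at $y\in Y_{\reg}$ goes as follows. Let $H\subset T_yY$ be an $(m-1)$-plane and set $K_0:=H\cap\ker dh_y$. Both $h^*\omega|_H$ and $h^*\chi|_H$ vanish on $K_0$, and $h^*\omega$ is positive definite on $H/K_0$. By order reversal of the trace, using $A_D\geq(1-\epsilon)h^*\omega$ and the fact that $B_D|_H$ factors through $H/K_0$, I get
\[
 \tr_{A_D|H}(B_D|H)\leq\tr_{(1-\epsilon)h^*\omega|H/K_0}(h^*\chi)
 =\frac{1}{1-\epsilon}\,\tr_{\omega|dh(H)}(\chi|dh(H))
 \leq\frac{\cab-m_0}{1-\epsilon}.
\]
The last step uses that $dh(H)$ has dimension $\leq n-2$. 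The first inequality, bounding the trace relative to $A_D$ on all of $H$ by the trace relative to $(1-\epsilon)h^*\omega$ on the quotient, is the one step needing care. I would prove it by choosing an $A_D$-orthogonal splitting $H=K_0\oplus K_0^{\perp}$. The trace of $B_D$ only sees $K_0^\perp$, which maps isomorphically to $H/K_0$, and on $K_0^\perp$ one has $A_D\geq(1-\epsilon)h^*\omega$, so the min–max principle applies.

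Finally, choose $\epsilon$ with $(\cab-m_0)/(1-\epsilon)\leq\cab-m_0/2$ and set $\eta_D:=m_0/2$. The bound is uniform because it rests only on the global constant $m_0$, not on any compactness near the bad locus, which gives \eqref{eq:normal-strict}.

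I expect the main obstacle to be bookkeeping rather than analysis. One must check that "smooth Kähler form on $Y$" makes sense near $Y_{\sing}$: I will use local embeddings and the Grauert–Kerr notion, so pullbacks of ambient smooth forms are admissible and Kähler classes contain such forms. One must also check that the degenerate directions of $dh$ on $Y_{\reg}$ cause no loss. That is handled by the quotient argument above, precisely because $B_D$ vanishes wherever $h^*\omega$ does.
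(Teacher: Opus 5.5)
Your treatment of $A_D$ and of the margin is correct and follows the paper's proof. The paper also takes $A_D=(1-\delta)h^*\omega+\delta A_1$ with $A_1\in a$ K\"ahler. It gets $\tr_{\omega|dh(H)}(\chi|dh(H))\le\cab-m_0$ from $\dim dh(H)\le n-2$ by padding $dh(H)$ to an $(n-1)$-plane, which is equivalent to your use of Lemma~\ref{lem:only-divisors}(ii) with $p=n-2$. It then passes to the quotient by $\ker dh$ via order reversal, as your $A_D$-orthogonal splitting does.

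\textbf{The gap is the choice $B_D:=h^*\chi$.} The lemma requires $B_D$ to be a \emph{K\"ahler} form in $b$, and $h^*\chi$ is not one in general. At $y\in Y_{\reg}$ its kernel is exactly $\ker dh_y$, the same as that of $h^*\omega$; you note this yourself at the end. For example, if $D$ is locally $\{y^2=x^3\}\times\C^{n-2}$, then $Y$ is smooth there, $h(t,w)=(t^2,t^3,w)$, and $h^*\chi$ annihilates $\partial_t$ at $t=0$.

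This is not cosmetic, because the later argument needs $B_D$ K\"ahler in several places:
\begin{enumerate}
\item the comparison $\ell A_D\le B_D$ in Lemma~\ref{lem:uniform-oscillation};
\item the positivity of $f^*B_D-\varepsilon_0\theta_E$;
\item the weighted bound $f^*B_D\ge c_0|s_E|_h^{2\gamma}\kappa$, which degenerates only along the exceptional divisor $E$. Since $f$ is an isomorphism over $Y_{\reg}$, a degeneracy of $h^*\chi$ inside $Y_{\reg}$ would not be captured by $E$;
\item the uniform equivalence of $B_t$ with a fixed metric on compact subsets of $f^{-1}(Y_{\reg})$, used in the equality case of Proposition~\ref{prop:strict-hodge-blowup}.
\end{enumerate}

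The missing step is the paper's second perturbation. Fix a K\"ahler form $B_1\in b$ and set $B_D:=(1-\tau)h^*\chi+\tau B_1$, which is K\"ahler for $0<\tau\le 1$. Compactness of $Y$ gives $B_1\le C\omega_a$, and $A_D\ge\epsilon\,\omega_a$. Hence $\tr_{A_D|H}(B_1|H)\le C(m-1)/\epsilon$ for every $(m-1)$-plane $H$. Combined with your estimate this gives
\[
 \tr_{A_D|H}(B_D|H)\le\frac{\cab-m_0}{1-\epsilon}+\tau\,\frac{C(m-1)}{\epsilon}.
\]
Fix $\epsilon$ first as you did, then choose $\tau$ with $\tau C(m-1)/\epsilon\le m_0/4$. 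This yields $P_{B_D}(A_D)\le\cab-m_0/4$ on $Y_{\reg}$, so one may take $\eta_D=m_0/4$. This step does use compactness of $Y$. Your closing remark that only the constant $m_0$ enters holds only for the degenerate choice of $B_D$.
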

Here a smooth form on a normal space means a form that is locally the
restriction of a smooth ambient form under an embedding
$U\hookrightarrow\mathbb C^N$. 
\begin{proof}
Put $A_0=h^*\omega$ and $B_0=h^*\chi$.  They are smooth
semipositive forms with the same kernel.  Let $H\subset T_yY_{\reg}$
be an $(m-1)$-plane and set
$\ell=\dim_\mathbb C dh(H)$. We use the superscript $+$ for the
trace on the quotient by the common kernel. Then
\[
 \tr^+_{A_0|H}(B_0|H)
 =
 \tr_{\omega|dh(H)}(\chi|dh(H)).
\]
Extend $dh(H)$ to an $m=(n-1)$-plane by adjoining an
$\omega$-orthogonal complement. Recall that
$m_0>0$ is the minimum on $X$ of the smallest eigenvalue of $\chi$
with respect to $\omega$. Each of the $m-\ell$ added directions
contributes at least $m_0$, so
\begin{equation}\label{eq:rank-trace}
 \tr^+_{A_0|H}(B_0|H)
 \leq\cab-(m-\ell)m_0\leq\cab-m_0.
\end{equation}

Choose arbitrary smooth K\"ahler forms $A_1\in a$, $B_1\in b$
on $Y$, and set
\[
 A_\delta=(1-\delta)A_0+\delta A_1,\qquad
 B_\tau=(1-\tau)B_0+\tau B_1.
\]
From monotonicity and \eqref{eq:rank-trace} we have
\[
 \tr_{A_\delta|H}(B_0|H)
 \leq\frac{\cab-m_0}{1-\delta}.
\]
Choose $\delta>0$ so small that
\[
 \frac{\cab-m_0}{1-\delta}\leq\cab-\frac{3m_0}{4}.
\]
Compactness gives $B_1\leq CA_1$, and
$A_\delta\geq\delta A_1$, whence
\[
 \tr_{A_\delta|H}(B_1|H)
 \leq\frac{C(m-1)}{\delta}.
\]
After fixing $\delta$, choose $\tau>0$ so small that
\[
 \tau\frac{C(m-1)}{\delta}\leq\frac{m_0}{4}.
\]
It follows that $P_{B_\tau}(A_\delta)\leq\cab-m_0/2$.
Take $A_D=A_\delta$, $B_D=B_\tau$, and
$\eta_D=m_0/4$.
\end{proof}

Fix a \textbf{log resolution} $f:\widetilde Y\to Y=D^\nu$ that is
obtained by blowups with smooth centers, is an isomorphism over
$Y_{\reg}$, and has a reduced exceptional divisor with simple normal
crossings.  Fix a K\"ahler form $\kappa$
on $\widetilde Y$. Choose
$\lambda\in(0,\eta_D/[4(m-1)])$ and, for $t>0$, put
\begin{equation}\label{eq:approx-classes}
 C_t=f^*A_D+t\kappa,\qquad
 B_t=f^*B_D+\lambda t\kappa,
\end{equation}
\[
 a_t=[C_t],\qquad b_t=[B_t],\qquad
 d_t=m\frac{b_t\cdot a_t^{m-1}}{a_t^m}.
\]
Both forms are K\"ahler.
\begin{lemma}\label{lem:strict-cone-condition-blowup}
For sufficiently small $t>0$,
$P_{B_t}(C_t)\leq d_t-\eta_D/2$.
\end{lemma}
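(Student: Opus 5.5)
The plan is to split the estimate into two independent pieces. First, a pointwise bound $P_{B_t}(C_t)\leq\cab-\tfrac34\eta_D$ holding on all of $\widetilde Y$ for every $t>0$. Second, a numerical bound $d_t\geq\cab-\tfrac14\eta_D$ for $t$ small. Together these give $P_{B_t}(C_t)\leq d_t-\eta_D/2$.

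\emph{Numerical part.} As $t\downarrow0$ we have $a_t\to f^*a$ and $b_t\to f^*b$ in $H^{1,1}(\widetilde Y,\R)$, so $d_t$ tends to $m\,(f^*b\cdot f^*a^{m-1})/(f^*a)^m$, provided the denominator is positive. Since $f$ is bimeromorphic, the projection formula gives $(f^*a)^m=a^m$ and $f^*b\cdot(f^*a)^{m-1}=b\cdot a^{m-1}$. Both numbers are computed on $Y=D^\nu$, and hence, by finiteness and birationality of $h$ onto $D$, they equal $\alpha^{n-1}\cdot D>0$ and $\beta\alpha^{n-2}\cdot D$ respectively. The nullity relation \eqref{eq:normalized-slope} then shows that the limit is exactly $\cab$. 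Because $d_t$ is a rational function of $t$ with nonvanishing denominator near $0$, it is continuous there, so $d_t\geq\cab-\eta_D/4$ for all small $t>0$.

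\emph{Pointwise part.} Fix $t>0$ and an $(m-1)$-plane $H\subset T_x\widetilde Y$. By linearity in the numerator,
\[
\tr_{C_t|H}(B_t|H)=\tr_{C_t|H}(f^*B_D|H)+\lambda t\,\tr_{C_t|H}(\kappa|H).
\]
For the second term, $C_t\geq t\kappa$, so $\tr_{C_t|H}(\kappa|H)\leq t^{-1}(m-1)$. The term is therefore at most $\lambda(m-1)<\eta_D/4$ by the choice of $\lambda$.

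For the first term, first take $x\in f^{-1}(Y_{\reg})$, where $f$ is biholomorphic. Monotonicity (order reversal) in the denominator gives
\[
\tr_{C_t|H}(f^*B_D|H)\leq\tr_{f^*A_D|H}(f^*B_D|H)\leq P_{B_D}(A_D)\leq\cab-\eta_D,
\]
using Lemma~\ref{lem:normal-subsolution}. To pass to the exceptional locus, use that $C_t$ is a genuine K\"ahler form on all of $\widetilde Y$. Consequently the map $(x,H)\mapsto\tr_{C_t|H}(f^*B_D|H)$ is continuous on the Grassmann bundle of $(m-1)$-planes. The preimage of the dense open set $f^{-1}(Y_{\reg})$ is dense in that bundle, so the bound $\cab-\eta_D$ extends everywhere. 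Summing the two terms and taking the maximum over $H$ yields $P_{B_t}(C_t)\leq\cab-\tfrac34\eta_D$ on $\widetilde Y$.

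\emph{Main obstacle.} The only delicate points are the behaviour over the exceptional divisor and over $Y_{\sing}$, where $f^*A_D$ degenerates. In the plan above this is handled by the continuity and density argument, which relies on the $t\kappa$ term making $C_t$ nondegenerate. The other point requiring care is the identification of the limit of $d_t$ with $\cab$, which rests on the projection formula for the finite map $h$ and the bimeromorphic map $f$. I expect the latter to be routine once one notes that $h_*[Y]=[D]$.
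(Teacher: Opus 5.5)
Your proposal is correct and follows the paper's proof essentially step for step. The paper likewise bounds $\tr_{C_t|H}(B_t|H)\le \cab-\eta_D+\lambda(m-1)\le\cab-\tfrac34\eta_D$ on $f^{-1}(Y_{\reg})$, extends this bound across the exceptional locus by continuity (using that $C_t,B_t$ are K\"ahler for fixed $t>0$), and then concludes from $d_t\to\cab$. Your additions, namely the explicit density argument on the Grassmann bundle and the projection-formula justification of $\lim_{t\downarrow0}d_t=\cab$, fill in details the paper leaves implicit.
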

\begin{proof}
On the dense open set $f^{-1}(Y_{\reg})$, the proof of
Lemma~\ref{lem:normal-subsolution} gives, for every $(m-1)$-plane $H$,
\[
 \begin{aligned}
 \tr_{C_t|H}(B_t|H)
 &\leq\tr^+_{f^*A_D|H}(f^*B_D|H)
      +\lambda t\tr_{C_t|H}\kappa\\
 &\leq\cab-\eta_D+\lambda(m-1)
 \leq\cab-\frac{3\eta_D}{4}.
 \end{aligned}
\]
For fixed $t>0$, the forms $B_t$ and $C_t$ are K\"ahler, the above inequality extends to all of $\widetilde{Y}$. Since $d_t\to\cab$, it follows
that, for all sufficiently small $t$,
\begin{equation}\label{eq:uniform-subsolution}
 P_{B_t}(C_t)\leq d_t-\frac{\eta_D}{2}.
\end{equation}
\end{proof}

\begin{lemma}
There exist an effective integral divisor $E$ with simple normal crossings, a defining section
$s_E$, a smooth Hermitian metric $h$ on $\cO(E)$ with
$|s_E|_h^2\leq1$, and constants $\varepsilon_0,c_0,\gamma>0$ such that
\begin{align}
 C_0-\varepsilon_0\theta_{E}&\geq c_0\kappa,
 \label{eq:C-exceptional-positive}\\
 B_0-\varepsilon_0\theta_{E}&\geq c_0\kappa,
 \label{eq:B-exceptional-positive}\\
 B_0&\geq c_0|s_E|_{h}^{2\gamma}\kappa.
 \label{eq:B-weighted-lower}
\end{align}
Here $\theta_E$ is the curvature of
$(\cO(E),h)$.
\end{lemma}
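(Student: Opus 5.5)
The plan is to use the standard metric construction on a log resolution, exactly as in Lemma~\ref{lem:birational-certificate}, applied to $f:\widetilde Y\to Y$. Here $C_0=f^*A_D$ and $B_0=f^*B_D$ are the $t=0$ forms of \eqref{eq:approx-classes}. They are pullbacks of smooth K\"ahler forms on the normal space $Y$, so they are smooth and semipositive on $\widetilde Y$, strictly positive on $f^{-1}(Y_{\reg})$, and degenerate only along the exceptional locus $\mathrm{Exc}(f)$.

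\textbf{Step 1.} Since $f$ is a composition of blowups with smooth centers, I would write $f=f_1\circ\cdots\circ f_k$ and argue by induction on the blowups. At each stage, the exceptional divisor $E_j$ of the blowup $f_j$ carries the metric for which $\cO(-E_j)$ restricted to $E_j$ is $\cO(1)$ on the projectivized normal bundle. Its curvature is therefore positive in the fibre directions. Taking suitable positive integer coefficients $b_j$ (the usual Kodaira/Hironaka choice), one gets an effective integral divisor $E=\sum_j b_jE_j$, supported on the SNC exceptional divisor, with a smooth metric $h$ such that:
\begin{itemize}
\item $f^*\omega_Y-\varepsilon_0\theta_E$ is K\"ahler for any K\"ahler form $\omega_Y$ on $Y$ and all small $\varepsilon_0$;
\item $\theta_E$ is the curvature of $(\cO(E),h)$.
\end{itemize}
Rescaling $h$ gives $|s_E|_h^2\le1$.

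\textbf{Step 2.} Apply this with $\omega_Y=A_D$ and with $\omega_Y=B_D$, and shrink $\varepsilon_0$. Compactness of $\widetilde Y$ then gives a uniform $c_0$ with $C_0-\varepsilon_0\theta_E\ge c_0\kappa$ and $B_0-\varepsilon_0\theta_E\ge c_0\kappa$, which are \eqref{eq:C-exceptional-positive} and \eqref{eq:B-exceptional-positive}. A subtle point is that positivity of $f^*\omega_Y-\varepsilon\theta_E$ for all small $\varepsilon$ requires the bundle metric to be chosen relative to $f^*\omega_Y$. However, $A_D$ and $B_D$ are mutually bounded on $Y$ (both are K\"ahler in the local-embedding sense), so $B_D\ge c\,A_D$. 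Hence one choice of $h$ works for both, possibly after replacing $\varepsilon_0$ by $c\varepsilon_0$.

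\textbf{Step 3.} For \eqref{eq:B-weighted-lower}, I would use a \L{}ojasiewicz-type argument. Locally, $B_D$ is the restriction of a K\"ahler form on $\mathbb C^N$, so $B_0\ge c\,f^*(\sum_k i\,dw_k\wedge d\bar w_k)$. Near a point of $\mathrm{Exc}(f)$, in SNC coordinates the Jacobian of $f$ (viewed into $\mathbb C^N$) has minors whose norms are bounded below by a monomial in the local equations of the exceptional components. Since the components of $E$ cover $\mathrm{Exc}(f)$ with positive coefficients, that monomial is bounded below by $C|s_E|_h^{2\gamma'}$. Concretely:
\begin{itemize}
\item the smallest eigenvalue of $f^*(\text{Euclidean})$ relative to $\kappa$ is a continuous function vanishing only on the analytic set $\mathrm{Exc}(f)$;
\item \L{}ojasiewicz's inequality compares it to the real-analytic function $|s_E|_h^2$, whose zero set contains $\mathrm{Exc}(f)$, giving a bound $\ge c|s_E|_h^{2\gamma}$ locally;
\item compactness makes the exponent $\gamma$ and the constant uniform.
\end{itemize}
Away from $\mathrm{Exc}(f)$ the bound is trivial, since $B_0$ is positive there.

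\textbf{Main obstacle.} The main difficulty is Step 3. The smallest eigenvalue is not real-analytic, so I would instead apply \L{}ojasiewicz to $\det\bigl(f^*B_D\text{ relative to }\kappa\bigr)$, which is real-analytic in local charts because $B_D$ is the restriction of a smooth form, and can be taken real-analytic after shrinking or comparing with the Euclidean form. The bound $\lambda_{\min}\ge\det/\lambda_{\max}^{m-1}$ then gives the weighted lower bound. Step 1 is classical, so I would only cite it.
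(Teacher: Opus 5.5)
Your proposal is correct and has the same architecture as the paper's proof. The paper likewise cites the standard blow-up metric construction for \eqref{eq:C-exceptional-positive}--\eqref{eq:B-exceptional-positive}. Your observation that $B_D\ge cA_D$ lets one metric $h$ serve both forms is a useful detail the paper leaves implicit. For \eqref{eq:B-weighted-lower}, the paper also uses a local embedding $F=j\circ f$, the bound $B_0\ge c_p\,J_F^*J_F$, the identity $\det(J_F^*J_F)=\sum_I|\Delta_I|^2$, and $\lambda_{\min}\ge\det/\lambda_{\max}^{m-1}$.

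The one real difference is how you compare the minors with $\sigma$. The paper notes that their common zero set lies in $\{\sigma=0\}$, because $dF$ has rank $m$ on $f^{-1}(Y_{\reg})$. The analytic Nullstellensatz then gives $\sigma^r=\sum_I a_I\Delta_I$, hence $|\sigma|^{2r}\le C\sum_I|\Delta_I|^2$ by Cauchy--Schwarz. You obtain the same inequality from the \L{}ojasiewicz comparison inequality applied to the same inclusion of zero sets. The Nullstellensatz route stays within holomorphic germs and yields an explicit ideal-membership certificate with an integer exponent. \L{}ojasiewicz is softer and would also handle non-holomorphic degeneracies, but it needs genuinely real-analytic inputs.

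That is the one point to tighten. The functions $|s_E|_h^2$ and $\det(f^*B_D/\kappa)$ are only smooth, since $h$, $\kappa$ and $B_D$ are merely smooth, and smoothness of $B_D$ does not give analyticity. So apply \L{}ojasiewicz to $|\sigma|^2$ and $\sum_I|\Delta_I|^2$ on a closed polydisc. Then return to $|s_E|_h^2$ and $\kappa$ by smooth comparability, using $|s_E|_h\le 1$ to take a common exponent $\gamma$ over a finite cover. The only inclusion you need is $\{\sum_I|\Delta_I|^2=0\}\subseteq\operatorname{Supp}E$. This follows from $f$ being an isomorphism over $Y_{\reg}$ together with $\operatorname{Supp}E\supseteq f^{-1}(Y_{\sing})$.
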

\begin{proof}
The standard way to construct K\"ahler metrics on the blow-ups gives an effective integral divisor $E$ with simple normal crossings, smooth hermitian metric $h$ on $\cO(E)$, and small constants $\epsilon_0,c_0$ such that \[
    C_0-\epsilon_0\theta_{E}\geq c_0\kappa,\qquad B_0-\epsilon_0\theta_E\geq c_0 \kappa.
    \]

After rescaling $h$ by a constant, we may also assume
$|s_E|_h^2\leq1$.

It remains to quantify how $B_0$ degenerates near the exceptional
divisor. Fix $p\in\widetilde Y$, choose coordinates
$z=(z_1,\ldots,z_m)$ on a small neighborhood $W_p$, and choose a
local embedding $j:V\hookrightarrow\C^N$ with $f(W_p)\subset V$. Put
$F=j\circ f=(F_1,\ldots,F_N)$ and let $J_F$ be its complex Jacobian.
After shrinking $W_p$, a smooth ambient extension of $B_D$ is uniformly
positive, and hence
\[
 B_0(v,v)\geq c_p|J_Fv|^2.
\]
For every $m$-element subset $I\subset\{1,\ldots,N\}$, let
$J_{F,I}$ be the corresponding $m\times m$ minor and put
$\Delta_I=\det J_{F,I}$. In the local ring
$\cO_{\widetilde Y,p}\simeq\C\{z_1,\ldots,z_m\}$, let $\cI_p$ be the
ideal generated by the germs of the $\Delta_I$. In a local frame of
$\cO(E)$, write
\[
 s_E=\sigma e,
 \qquad
 \sigma=z_1^{e_1}\cdots z_q^{e_q}g,
\]
where $e_j>0$ and $g$ is a holomorphic unit. Let $Z_p=V(\cI_p)$ be the common zero germ. If $x\in f^{-1}(Y_\reg)$, then $f$ is biholomorphic near $x$ and $j$ is an embedding. So $dF_x$ has rank $m$. Hence $Z_p\subset V(\sigma_p)$, where $\sigma_p$ is the germ of $\sigma$ at $p$.

Now, by Hilbert's Nullstellensatz (see for example \cite[Chapter II, Theorem 4.22]{D12}), $\cI(Z_p)=\sqrt{\cI_p}:=\{g\in \cO_p:g^r\in \cI_p \text{ for some }r>1\}$. Here $\cI(Z_p)$ is the ideal of all holomorphic germs at $p$ that vanish on the germ $Z_p$. Since $\sigma_p$ vanishes on $Z_p$, it implies that there are $r\geq 1$, and germs $h_{I,p}\in \cO_p$ such that $\sigma_p^r=\sum_{I}h_{I,p}(\Delta_{I})_p$. 
It follows that $|\sigma|^{2r}\leq C\sum_{I}|\Delta_I|^2\leq C\det(J_F^*J_F)$ after shrinking $W_p$. Let $0\leq \lambda_1\leq \lambda_2\leq ...\leq \lambda_m\leq M_p$ be the eigenvalues of $J_F^*J_F$. Then \[
|\sigma|^{2r}\leq C_pM_p^{m-1}\lambda_1.
\]
Hence $B_0\geq C\lambda_1\kappa\geq C|\sigma|^{2r}M_p^{1-m}\kappa\geq C|s_E|_h^{2r}\kappa$. Since $\tilde{Y}$ is compact, the uniform estimate \eqref{eq:B-weighted-lower} follows. 

\end{proof}
By Lemma~\ref{lem:strict-cone-condition-blowup} and the standard
subsolution theorem for the $J$-equation \cite{SW08,C21}, there is a
unique smooth solution of $\tr_{\Theta_t}B_t=d_t$:
\begin{equation}\label{eq:resolution-J}
 \Theta_t=C_t+\ddc u_t>0,\qquad
 \tr_{\Theta_t}B_t=d_t.
\end{equation}

We first establish the uniform oscillation estimate needed for local
smooth convergence of $\Theta_t$ away from the exceptional divisor as
$t\to0$.
\begin{lemma}
\label{lem:uniform-oscillation}
Normalize $\sup_{\widetilde Y}u_t=0$.  There is a constant $C$,
independent of sufficiently small $t>0$, such that
\begin{equation}\label{eq:uniform-C0}
 \|u_t\|_{L^\infty(\widetilde Y)}\leq C.
\end{equation}
\end{lemma}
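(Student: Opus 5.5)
We will deduce the uniform $L^\infty$ bound from the Guo--Phong oscillation estimate \cite{GP24} for fully nonlinear equations, which bounds $\operatorname{osc}u$ in terms of a uniform entropy-type bound on the reference volume form and the size of the right-hand side. The first step is to put the equation $\tr_{\Theta_t}B_t=d_t$ in the form required there. Since $B_t$ is K\"ahler, we will work in the background metric $C_t$ (which degenerates as $t\to0$ only along the exceptional set) and rewrite the equation as $F(\lambda(C_t^{-1}\Theta_t))$ for a concave elliptic operator, with the normalization relative to the varying background $\omega_t:=C_t$.

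For this we need three uniform bounds. First, the volumes $\int C_t^m=a_t^m$ stay in a compact subset of $(0,\infty)$; this holds because $a_t^m\to a^m>0$. Second, $d_t\to\cab$, so the right-hand sides are uniformly controlled. Third, and this is the key structural input, we need an entropy bound $\int_{\widetilde Y}(C_t^m/V_t\,dV_\kappa)\log(\cdot)\,dV_\kappa\leq K$, or equivalently a uniform $L^{1+\epsilon}$ bound on the densities of $C_t^m$ relative to $\kappa^m$. Since $C_t^m\leq (f^*A_D+\kappa)^m$ for $t\leq1$, this density is bounded above uniformly, so the bound follows trivially. Guo--Phong's result is designed for families of backgrounds in a fixed class region, where the only inputs are this entropy bound and a uniform lower bound on the volume. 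Their theorem applies to $\Theta_t$ satisfying $\Theta_t^m\leq c(t)\,C_t^m$, and we will supply this from the equation.

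The main obstacle is exactly this last inequality. From $\tr_{\Theta_t}B_t=d_t$, the arithmetic--geometric mean inequality gives $\det(B_t)/\det(\Theta_t)\leq (d_t/m)^m$, so $\Theta_t^m\geq (m/d_t)^mB_t^m$. This is a lower bound, whereas the $C^0$ estimate needs an upper bound. We will instead exploit the subsolution: Lemma~\ref{lem:strict-cone-condition-blowup} gives $P_{B_t}(C_t)\leq d_t-\eta_D/2$ uniformly. Following the Székelyhidi-type $C^0$ argument, we apply an Alexandrov--Blocki maximum principle (ABP) at the minimum points of $u_t-\sup u_t$ with respect to $\kappa$, and we use the uniform margin $\eta_D/2$ to bound the Monge--Amp\`ere measure of the contact set. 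In that argument, the degeneration of $B_t$ is compensated by the weighted lower bound \eqref{eq:B-weighted-lower} together with the exceptional positivity \eqref{eq:C-exceptional-positive}--\eqref{eq:B-exceptional-positive}. The plan is to run the Guo--Phong auxiliary Monge--Amp\`ere comparison with right-hand side $\tfrac{-u_t+s}{A_s}\,C_t^m$ on sublevel sets $\{u_t<-s\}$, and to use the cone condition to show that $\Theta_t$ is dominated on these sets by $\bigl(d_t/\eta_D\bigr)\kappa$-multiples of the reference metric. This yields $\|u_t\|_{L^\infty}\leq C(m,K,\eta_D,\cab)$ with $K$ independent of $t$. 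If the direct ABP route fails because $B_t$ degenerates, the fallback is to compare with the pulled-back potential: $f^*A_D$ is smooth semipositive and big, so the family $C_t$ has uniformly bounded Monge--Amp\`ere capacity constants, and Ko{\l}odziej-type estimates then give the same uniform bound.
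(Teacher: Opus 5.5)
There is a genuine gap, and it starts with the choice of background metric. The equation $\tr_{\Theta_t}B_t=d_t$ is not a function of the eigenvalues of $C_t^{-1}\Theta_t$; it depends on $\Theta_t$ only through its eigenvalues relative to $B_t$. So in Guo--Phong's framework the background metric has to be $\varpi_t=d_t^{-1}B_t$. The equation then becomes $\Theta_t^m=m\varpi_t\wedge\Theta_t^{m-1}$ with $e^F=1$, which is their Example~4.1.1.

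In this setup $C_t$ is the \emph{subsolution}, not the background. If $\mu_i$ are the eigenvalues of $\varpi_t^{-1}C_t$, their $\delta$-subsolution condition $\sum_{j\ne i}\mu_j^{-1}\le 1-\delta$ is exactly $P_{B_t}(C_t)\le(1-\delta)d_t$, and \eqref{eq:uniform-subsolution} gives this uniformly in $t$. Their auxiliary requirement $C_t\le C\varpi_t$ follows from $\ell A_D\le B_D\le LA_D$. With these roles, their subsolution theorem applies directly. No determinant-domination bound of the type $\Theta_t^m\le c\,C_t^m$ is needed, and as you observed none is available. You also do not need to run an ABP or sublevel-set argument by hand.

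What then has to be checked, and what your proposal misses, is that the \emph{degenerating} backgrounds $\varpi_t$ lie in one fixed family $\mathcal W$. The volume, entropy and $[\varpi_t]\cdot[\kappa]^{m-1}$ bounds are easy, since $\varpi_t\le C\kappa$ and $V_t\to\cab^{-m}\int_Y B_D^m>0$. But the family also requires a uniform lower bound $\varpi_t^m/(V_t\kappa^m)\ge\Gamma$, where $\Gamma\ge0$ is continuous and its zero set has Hausdorff dimension $<2m-1$. Here $\Gamma=C_*B_0^m/\kappa^m$ works, because it vanishes only on the critical locus of $f$. Your claim that entropy and a volume lower bound are the only inputs is therefore inaccurate.

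The substitutes you propose do not close the argument:
\begin{itemize}
\item Showing that $\Theta_t$ is dominated by a multiple of a reference metric on sublevel sets is a pointwise second-order bound. It is not available at the $C^0$ stage; the paper's $C^2$ estimate takes this lemma as input.
\item The constants in an ABP argument depend on the background geometry, and that geometry is exactly what degenerates here.
\item The Ko\l odziej fallback needs a uniform $L^p$ bound, $p>1$, on $\Theta_t^m/\kappa^m$. The $J$-equation only bounds $\Theta_t^m$ from below.
\end{itemize}
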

\begin{proof}
We apply the K\"ahler version of the uniform subsolution estimate of
Guo--Phong \cite[Theorem~4.1 and Example~4.1.1]{GP24}.  We verify
its hypotheses explicitly.

Since $A_D$ and $B_D$ are K\"ahler metrics on the compact
K\"ahler space $Y$, there are numbers $0<\ell\leq L$ such that
$\ell A_D\leq B_D\leq LA_D$.  Pulling back and adding the matching
$t\kappa$-terms gives
\begin{equation}\label{eq:C-B-comparable}
 \min\{\ell,\lambda\}C_t
 \leq B_t\leq
 \max\{L,\lambda\}C_t.
\end{equation}
The numbers $d_t$ converge to $\cab>0$.  Introduce the rescaled
background metric
\[
 \varpi_t:=d_t^{-1}B_t.
\]
Then $\varpi_t$ and $B_t$ are uniformly equivalent.  Moreover,
\eqref{eq:uniform-subsolution} implies, for a fixed $\delta>0$,
\begin{equation}\label{eq:GP-delta-subsolution}
 P_{B_t}(C_t)\leq(1-\delta)d_t.
\end{equation}
This is exactly the $\delta$-subsolution condition in
\cite[Section~4.1.1]{GP24} for
\begin{equation}\label{eq:GP-J-form}
 \Theta_t^m=m\varpi_t\wedge\Theta_t^{m-1}.
\end{equation}
Indeed, in Guo--Phong's notation the background metric is $\varpi_t$,
the unknown K\"ahler form is $\Theta_t$, $e^F=1$, and the
subsolution is $C_t$ (their auxiliary function $\theta$ is zero).
The choice $e^F=1$ also verifies the normalization imposed immediately
after their equation~(1.1).  If $\mu_i$ are the eigenvalues of
$\varpi_t^{-1}C_t$, their condition
is
\[
 \sum_{j\ne i}\mu_j^{-1}\leq1-\delta
 \quad\text{for every }i,
\]
which is equivalent to \eqref{eq:GP-delta-subsolution} by the
homogeneity and the definition of $P_{B_t}(C_t)$.
The upper bound required there, $C_t\leq C\varpi_t$, follows from
\eqref{eq:C-B-comparable}.

It remains to check that the possibly degenerating background metrics
$\varpi_t$ belong to one fixed family
$\mathcal W(A,K,p,\Gamma;\kappa)$ in the notation of
\cite[Equation~(2.5)]{GP24}.  Put
\[
 V_t=\int_{\widetilde Y}\varpi_t^m,
 \qquad
 e^{F_t}=\frac1{V_t}\frac{\varpi_t^m}{\kappa^m}.
\]
The positive numbers $V_t$ converge to
$\cab^{-m}\int_Y B_D^m>0$, and $\varpi_t\leq C\kappa$; hence
$0<e^{F_t}\leq C$.  Moreover
\begin{equation}\label{eq:density-lower}
 e^{F_t}\geq
 \Gamma:=C_*\,\frac{B_0^m}{\kappa^m},
\end{equation}
where $C_*>0$ is a fixed lower bound for
$d_t^{-m}/V_t$.
The function $\Gamma$ is continuous and nonnegative.  Its zero set
is contained in the critical locus of $f$, a complex analytic set of
real Hausdorff dimension at most $2m-2<2m-1$, as required in
Guo--Phong's theorem. For every fixed $p>m$, the elementary bound
\[
 \sup_{0<x\leq C}x|\log x|^p<\infty
\]
and the estimate $0<e^{F_t}\leq C$ give
\[
 \int_{\widetilde Y}|F_t|^p e^{F_t}\,\kappa^m\leq C_p.
\]

Finally $[\varpi_t]\cdot[\kappa]^{m-1}$ is uniformly bounded.  These are
all the defining conditions of the fixed family $\mathcal W$.
The cited theorem applied to \eqref{eq:GP-J-form} now gives
$\operatorname{osc}u_t\leq C$, which is
\eqref{eq:uniform-C0} under our normalization.
\end{proof}

Choose $0<\varepsilon<\varepsilon_0$, put
\begin{equation}\label{eq:rho-w-hatC}
 \rho_0=\log|s_E|_{h}^2,
 \qquad
 \widehat C_t=C_t-\varepsilon R_E,
 \qquad
 w_t=u_t-\varepsilon\rho_0.
\end{equation}
On $\widetilde Y\setminus E$,
\begin{equation}\label{eq:Theta-hatC-w}
 \Theta_t=\widehat C_t+\ddc w_t.
\end{equation}
\begin{lemma}
\label{lem:corrected-reference}
The number $\varepsilon$ can be chosen independently of $t$ so that
there are constants $c_1,C_1,\eta_1>0$ satisfying
\begin{equation}\label{eq:hatC-uniform}
 c_1\kappa\leq\widehat C_t\leq C_1\kappa,
 \qquad
 P_{B_t}(\widehat C_t)\leq d_t-\eta_1.
\end{equation}
Furthermore, after changing the constant in
\eqref{eq:B-weighted-lower},
\begin{equation}\label{eq:B-weighted-hatC}
 B_t\geq c_1|s_E|_{h}^{2\gamma}
 \widehat C_t.
\end{equation}
All $C^2$-norms of $B_t$ and $\widehat C_t$, measured with
respect to $\kappa$, are bounded independently of $t$.
\end{lemma}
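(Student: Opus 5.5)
Here $R_E$ is read as $\theta_E$, the curvature of $(\cO(E),h)$. The plan is to choose $\varepsilon$ small in terms of the fixed constants $\varepsilon_0,c_0,\eta_D$ from the preceding lemmas. All the estimates then follow from convex-combination identities that separate $C_0$ and $B_0$ from the $\varepsilon\theta_E$ correction.

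\textbf{Two-sided bound on $\widehat C_t$.} Write
\[
 \widehat C_t=C_0+t\kappa-\varepsilon\theta_E
 =\Bigl(1-\tfrac{\varepsilon}{\varepsilon_0}\Bigr)C_0
 +\tfrac{\varepsilon}{\varepsilon_0}\bigl(C_0-\varepsilon_0\theta_E\bigr)+t\kappa .
\]
The first term is semipositive and the second is at least $\tfrac{\varepsilon}{\varepsilon_0}c_0\kappa$ by \eqref{eq:C-exceptional-positive}. Hence $\widehat C_t\geq c_1\kappa$ with $c_1=\varepsilon c_0/\varepsilon_0$, uniformly in $t$. The upper bound $\widehat C_t\leq C_1\kappa$ holds because $C_0$ and $\theta_E$ are fixed smooth forms and $t\leq1$. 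The $C^2$ bounds on $B_t$ and $\widehat C_t$ are immediate, since both are affine in $t$ with fixed smooth coefficients.

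\textbf{Weighted lower bound.} By \eqref{eq:B-weighted-lower}, $B_t\geq B_0\geq c_0|s_E|_h^{2\gamma}\kappa$. Combining this with $\widehat C_t\leq C_1\kappa$ gives \eqref{eq:B-weighted-hatC}, with constant $c_0/C_1$.

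\textbf{Cone condition.} This is the main step. By Lemma~\ref{lem:strict-cone-condition-blowup}, $P_{B_t}(C_t)\leq d_t-\eta_D/2$. I want to show that subtracting $\varepsilon\theta_E$ costs at most $O(\varepsilon)$ in $P_{B_t}$, uniformly in $t$, even though $B_t$ degenerates near $E$. Comparing with $C_t$ directly fails, because $\theta_E$ is not controlled by $C_t$ there. Instead I would use the same decomposition as above,
\[
 \widehat C_t=\Bigl(1-\tfrac{\varepsilon}{\varepsilon_0}\Bigr)C_t+\tfrac{\varepsilon}{\varepsilon_0}\bigl(C_0-\varepsilon_0\theta_E+t\kappa\bigr),
\]
whose second summand is semipositive. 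Since $P_{B_t}$ is order-reversing and homogeneous of degree $-1$,
\[
 P_{B_t}(\widehat C_t)\leq \frac{P_{B_t}(C_t)}{1-\varepsilon/\varepsilon_0}
 \leq\frac{d_t-\eta_D/2}{1-\varepsilon/\varepsilon_0}.
\]
Because $d_t\to\cab$, the values $d_t$ are bounded for small $t$. Choosing $\varepsilon$ with $\tfrac{\varepsilon}{\varepsilon_0}\sup_t d_t\leq\eta_D/8$, for example, makes the right-hand side at most $d_t-\eta_D/4$. So we may take $\eta_1=\eta_D/4$, with $\varepsilon$ independent of $t$. This step avoids any degeneracy issue near $E$, since order reversal needs only $\widehat C_t$ to be positive, which is established in the first step. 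If a difficulty arises, it would be in the positivity requirement for the second summand, and that is exactly what \eqref{eq:C-exceptional-positive} provides.
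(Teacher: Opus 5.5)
Your proposal is correct and follows the paper's proof essentially step for step. The key is the same splitting $\widehat C_t=(1-q)C_t+q(C_0-\varepsilon_0\theta_E+t\kappa)$ with $q=\varepsilon/\varepsilon_0$: it gives $\widehat C_t\ge qc_0\kappa$, and, via order reversal and degree $-1$ homogeneity of $P_{B_t}$, the cone margin once $q\sup_t d_t$ is small. The weighted bound then follows from \eqref{eq:B-weighted-lower} and $\widehat C_t\le C_1\kappa$ (take $c_1$ to be the smaller of your two constants), and your reading $R_E=\theta_E$ is the one consistent with \eqref{eq:Theta-hatC-w}.
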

\begin{proof}
Let $q=\varepsilon/\varepsilon_0$.  The exact identity
\[
 \widehat C_t
 =(1-q)C_t+q(C_0-\varepsilon_0R_E)+qt\kappa
\]
shows both that $\widehat C_t\geq qc_0\kappa$ and that
$\widehat C_t\geq(1-q)C_t$.  Therefore
\[
 P_{B_t}(\widehat C_t)
 \leq\frac{d_t-\eta_D/2}{1-q}.
\]
Choose $q>0$ so small that
$q\sup_t d_t\leq\eta_D/4$.  The last display is then at most
$d_t-\eta_1$ for a fixed $\eta_1>0$.  The upper metric and
derivative bounds are immediate from the smooth definitions.  Finally,
\eqref{eq:B-weighted-hatC} follows from
\eqref{eq:B-weighted-lower} and $\widehat C_t\leq C_1\kappa$.
\end{proof}

We next derive the $C^2$-estimate, keeping track of the constants to
show that they are independent of $t$. The argument is inspired by
\cite[Lemma~3.7]{T23}. Let
\[
 \Lambda_t=\tr_{\widehat C_t}\Theta_t,
 \qquad
 \mathcal F_t^{i\bar j}
 =\bigl(\Theta_t^{-1}B_t\Theta_t^{-1}\bigr)^{i\bar j},
 \qquad
 \mathcal L_t v=\mathcal F_t^{i\bar j}v_{i\bar j},
\]
and put
\[
 \mathcal S_t=\mathcal F_t^{i\bar j}(\widehat C_t)_{i\bar j}.
\]
\begin{lemma}
\label{lem:weighted-trace}

There is a constant $C>0$, independent of $A$ and $t$, with the
following property. For every $A>0$, at a maximum point in
$\widetilde Y\setminus E$ of
\[
 Q_t=\log\Lambda_t-Aw_t,
\]
either
\begin{equation}\label{eq:weighted-alternative}
 \Lambda_t\leq
 C|s_E|_{h}^{-4\gamma},
\end{equation}
or
\begin{equation}\label{eq:maximum-inequality}
 0\leq-\mathcal L_tQ_t
 \leq C(1+\mathcal S_t)+A(d_t-\mathcal S_t).
\end{equation}
\end{lemma}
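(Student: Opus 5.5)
We will run the standard maximum-principle computation for the $J$-equation, in the form of Song--Weinkove and \cite[Lemma~3.7]{T23}, with reference metric $\widehat C_t$ and $\Theta_t=\widehat C_t+\ddc w_t$ on $\widetilde Y\setminus E$. Since $w_t=u_t-\varepsilon\rho_0\to+\infty$ near $E$ and $u_t$ is bounded by Lemma~\ref{lem:uniform-oscillation}, the function $Q_t$ tends to $-\infty$ at $E$. Hence it attains a maximum at some point $x_0\in\widetilde Y\setminus E$, where $0\leq-\mathcal L_tQ_t$ because $\mathcal F_t$ is positive definite. So the first inequality in \eqref{eq:maximum-inequality} is automatic, and the work is the upper bound.

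\textbf{The $w_t$-term.} We have $\mathcal L_t w_t=\mathcal F_t^{i\bar j}(\Theta_t-\widehat C_t)_{i\bar j}=\tr_{\Theta_t}B_t-\mathcal S_t=d_t-\mathcal S_t$. This produces exactly the term $A(d_t-\mathcal S_t)$.

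\textbf{The $\log\Lambda_t$-term.} At $x_0$ we choose coordinates that are $\widehat C_t$-normal and diagonalize $\Theta_t$. We differentiate the equation $\tr_{\Theta_t}B_t=d_t$ twice. The standard identity reads
\[
-\mathcal L_t\log\Lambda_t\leq \frac{1}{\Lambda_t}\Bigl(-\widehat C_t^{k\bar l}\partial_k\partial_{\bar l}(\tr_{\Theta}B)\ \text{terms}\Bigr)+\text{curvature terms}-\frac{|\partial\Lambda_t|^2_{\mathcal F}}{\Lambda_t^2}+\dots
\]
Concavity of $\Theta\mapsto-\tr_\Theta B$ supplies the good third-order terms. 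These absorb the gradient term $|\partial\Lambda|^2/\Lambda^2$ via the usual Cauchy--Schwarz (Yau/Aubin trick), leaving only terms from second derivatives of $B_t$ and the curvature of $\widehat C_t$.

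By Lemma~\ref{lem:corrected-reference}, the curvature of $\widehat C_t$ and the $C^2$-norms of $B_t$ are bounded independently of $t$. The curvature of $\widehat C_t$ contributes at most $C\sum_i\mathcal F^{i\bar i}=C\,\tr_{\widehat C_t}(\mathcal F)\le C\mathcal S_t$, using the uniform equivalence $\widehat C_t\sim\kappa$. The derivative terms of $B_t$ contribute $\Lambda_t^{-1}\sum_k\Theta^{k\bar k}\cdot\Theta^{-1}$-weighted quantities. These are bounded by $C(1+\mathcal S_t)$ provided one can compare the mixed terms $\Theta^{i\bar i}|\nabla B|\,\Theta^{j\bar j}$ with $\mathcal F^{i\bar i}=\Theta^{i\bar i}B_{i\bar i}\Theta^{i\bar i}$.

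\textbf{Main obstacle.} This comparison is where $B_t$ degenerates. We only have $B_t\geq c_1|s_E|^{2\gamma}\widehat C_t$ from \eqref{eq:B-weighted-hatC}, so $\Theta^{i\bar i}\Theta^{j\bar j}|\partial B|^2\lesssim |s_E|^{-2\gamma}\mathcal F^{i\bar i}\cdots$. Using $\Theta^{i\bar i}\le\Lambda_t^{-1}$-type bounds and Cauchy--Schwarz, the bad contributions are bounded by $C|s_E|^{-4\gamma}\Lambda_t^{-1}(1+\mathcal S_t)$-type quantities. We then split into cases at $x_0$. If $\Lambda_t\leq C|s_E|^{-4\gamma}$, we are in alternative \eqref{eq:weighted-alternative}. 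Otherwise $|s_E|^{-4\gamma}/\Lambda_t\le 1$, the weighted error collapses to $C(1+\mathcal S_t)$, and adding the $w_t$ contribution gives \eqref{eq:maximum-inequality}. Throughout, we track that every constant comes only from Lemma~\ref{lem:corrected-reference} and the bound on $d_t$, never from $A$ or $t$; the $A$ appears only linearly in the $w_t$-term.
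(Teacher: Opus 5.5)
Your skeleton is the paper's: maximum point off $E$, $\mathcal L_tw_t=d_t-\mathcal S_t$, curvature of $\widehat C_t$ bounded by $C\mathcal S_t$, and the final dichotomy. The step specific to this lemma, the degenerate bookkeeping in your ``main obstacle'' paragraph, is not correctly supplied.

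\textbf{The mixed term.} Your coordinates are normal for $\widehat C_t$, not for $B_t$, which is the right choice since $B_t$ has no uniform curvature bound. So $\partial B_t\neq0$ at the point. After the gradient term $\mathcal F_t^{k\bar\ell}\partial_k\Lambda_t\partial_{\bar\ell}\Lambda_t/\Lambda_t^2$ is absorbed by the first good cubic term, more than second derivatives of $B_t$ and curvature survive. There is also the mixed term $\frac{2}{\Lambda_t}\operatorname{Re}\sum_{i,r,p}\lambda_r^{-1}\lambda_p^{-1}\,\partial_i(\Theta_t)_{r\bar p}\,\partial_{\bar i}(B_t)_{p\bar r}$. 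It contains third derivatives of the unknown, so it cannot be handled by comparison with $\mathcal F_t$. It must be absorbed by completing the square against the second good cubic term. What remains is $\frac1{\Lambda_t}\sum_{r}\lambda_r^{-1}(\partial B_t)B_t^{-1}(\bar\partial B_t)\le \frac{C}{\Lambda_t}\tr_{\Theta_t}\widehat C_t\,\tr_{B_t}\widehat C_t$. This is where the degenerating inverse $B_t^{-1}$ enters, and \eqref{eq:B-weighted-hatC} gives $\tr_{B_t}\widehat C_t\le C|s_E|_h^{-2\gamma}$.

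\textbf{The factor $\tr_{\Theta_t}\widehat C_t$.} You still need an upper bound for $\tr_{\Theta_t}\widehat C_t=\sum_r\lambda_r^{-1}$. It multiplies the term above and also the second-derivative term $\Lambda_t^{-1}\widehat C_t^{i\bar j}\Theta_t^{p\bar q}\partial_i\partial_{\bar j}(B_t)_{p\bar q}$.
\begin{itemize}
\item Your ``$\Theta^{i\bar i}\le\Lambda_t^{-1}$-type bounds'' go the wrong way, since $\lambda_i^{-1}\ge\Lambda_t^{-1}$.
\item The missing input is the equation itself. The eigenvalues of $\Theta_t^{-1}B_t$ are positive with sum $d_t$, so $B_t\le d_t\Theta_t$. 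Hence $\tr_{\Theta_t}\widehat C_t\le d_t\tr_{B_t}\widehat C_t\le C|s_E|_h^{-2\gamma}$.
\item Combining the two factors bounds every bad term by $C\Lambda_t^{-1}|s_E|_h^{-4\gamma}$. Your case split then goes through exactly as you wrote it.
\end{itemize}
Alternatively, in coordinates with $\widehat C_t=I$, Cauchy--Schwarz gives $\sum_i\lambda_i^{-1}\le\bigl(\mathcal S_t\sum_i(B_t)_{i\bar i}^{-1}\bigr)^{1/2}\le C|s_E|_h^{-\gamma}(1+\mathcal S_t)$. That produces the $(1+\mathcal S_t)$-type error you had in mind. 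One of these two arguments has to be supplied, since this is the only place where the degeneracy of $B_t$ is actually controlled.
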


\begin{proof}
Write $g=\widehat C_t$, $\widetilde g=\Theta_t$, and
$\theta=B_t$, set $F^{i\bar j}:=\mathcal F_t^{i\bar j}$ and
$\mathcal L:=\mathcal L_t$, and
suppress $t$ from $d_t,\Lambda_t$, and $\mathcal S_t$. We write $|s|$ for
$|s_E|_{h}$.  Fix the maximum point and take
holomorphic coordinates there such that $g_{i\bar j}=\delta_{ij}$,
the first derivatives of $g$ vanish, and $\widetilde g$ is
diagonal.

We first differentiate the equation
$\widetilde g^{p\bar q}\theta_{p\bar q}=d$.  The inverse-matrix
identity $
 (\widetilde g^{p\bar q})_i
 =-\widetilde g^{p\bar b}(\widetilde g_{a\bar b})_i
   \widetilde g^{a\bar q}$
gives
\begin{equation}\label{eq:first-differentiated-J}
 0=-\widetilde g^{r\bar q}\widetilde g^{p\bar s}
 (\widetilde g_{r\bar s})_i\theta_{p\bar q}
 +\widetilde g^{p\bar q}(\theta_{p\bar q})_i.
\end{equation}
Differentiating once more and contracting the differentiating indices
with $g^{i\bar j}$ gives
\begin{align}
0=g^{i\bar j}\Bigl\{&
 \Bigl[\widetilde g^{r\bar q}\widetilde g^{p\bar s}
       (\widetilde g_{r\bar s})_{i\bar j}
 -\widetilde g^{r\bar q}\widetilde g^{p\bar b}
       \widetilde g^{a\bar s}
       (\widetilde g_{a\bar b})_{\bar j}
       (\widetilde g_{r\bar s})_i \notag\\
&\quad
 -\widetilde g^{r\bar b}\widetilde g^{a\bar q}
       \widetilde g^{p\bar s}
       (\widetilde g_{a\bar b})_{\bar j}
       (\widetilde g_{r\bar s})_i\Bigr]\theta_{p\bar q}
 \notag\\
&+2\operatorname{Re}\!\left(
 \widetilde g^{p\bar\ell}\widetilde g^{k\bar q}
 (\widetilde g_{k\bar\ell})_i
 (\theta_{p\bar q})_{\bar j}\right)
 -\widetilde g^{p\bar q}(\theta_{p\bar q})_{i\bar j}
 \Bigr\}.\label{eq:second-differentiated-J}
\end{align}

At the same point,
\[
 \Lambda_k=g^{p\bar q}(\widetilde g_{p\bar q})_k,
 \qquad
 \Lambda_{k\bar\ell}
 =R^{p\bar q}{}_{k\bar\ell}(g)\widetilde g_{p\bar q}
  +g^{p\bar q}(\widetilde g_{p\bar q})_{k\bar\ell}.
\]
Indeed, $(g^{p\bar q})_k=0$ in $g$-normal coordinates and, with
our curvature convention,
$(g^{p\bar q})_{k\bar\ell}=R^{p\bar q}{}_{k\bar\ell}(g)$.
Insert these formulas and add $\Lambda^{-1}$ times the zero identity
\eqref{eq:second-differentiated-J} to
\[
 -\mathcal L\log\Lambda
 =-\frac{F^{k\bar\ell}\Lambda_{k\bar\ell}}{\Lambda}
  +\frac{F^{k\bar\ell}\Lambda_k\Lambda_{\bar\ell}}{\Lambda^2}.
\]
The term
$\Lambda^{-1}g^{i\bar j}F^{r\bar s}
(\widetilde g_{r\bar s})_{i\bar j}$ cancels
$-\Lambda^{-1}F^{k\bar\ell}g^{p\bar q}
(\widetilde g_{p\bar q})_{k\bar\ell}$.  More explicitly, a local
potential for the K\"ahler form $\widetilde g$ gives
$(\widetilde g_{r\bar s})_{i\bar j}
=(\widetilde g_{i\bar j})_{r\bar s}$, and hence
$g^{i\bar j}F^{r\bar s}(\widetilde g_{r\bar s})_{i\bar j}
=F^{r\bar s}g^{i\bar j}(\widetilde g_{i\bar j})_{r\bar s}$;
renaming $(r,s,i,j)$ as $(k,\ell,p,q)$ gives exactly the other
fourth-order contraction.  What remains is
\begin{align}
-\mathcal L\log\Lambda={}&
-\frac1\Lambda g^{i\bar j}\widetilde g^{r\bar q}
 \widetilde g^{p\bar b}\widetilde g^{a\bar s}
 (\widetilde g_{a\bar b})_{\bar j}
 (\widetilde g_{r\bar s})_i\theta_{p\bar q}\notag\\
&-\frac1\Lambda g^{i\bar j}\widetilde g^{r\bar b}
 \widetilde g^{a\bar q}\widetilde g^{p\bar s}
 (\widetilde g_{a\bar b})_{\bar j}
 (\widetilde g_{r\bar s})_i\theta_{p\bar q}\notag\\
&+\frac2\Lambda\operatorname{Re}\!\left(
 g^{i\bar j}\widetilde g^{p\bar\ell}
 \widetilde g^{k\bar q}(\widetilde g_{k\bar\ell})_i
 (\theta_{p\bar q})_{\bar j}\right)
 -\frac1\Lambda g^{i\bar j}\widetilde g^{p\bar q}
 (\theta_{p\bar q})_{i\bar j}\notag\\
&-\frac1\Lambda F^{k\bar\ell}
 R^{p\bar q}{}_{k\bar\ell}(g)\widetilde g_{p\bar q}
 +\frac1{\Lambda^2}F^{k\bar\ell}
 \Lambda_k\Lambda_{\bar\ell}.
\label{eq:expanded-trace-calculation}
\end{align}
Put
$\widetilde g_{i\bar j}=\lambda_i\delta_{ij}$.  Since
$\widetilde g$ is K\"ahler,
$(\widetilde g_{r\bar a})_i=(\widetilde g_{i\bar a})_r$.
Direct substitution of $g^{i\bar j}=\delta_{ij}$ and
$\widetilde g^{i\bar j}=\lambda_i^{-1}\delta_{ij}$ shows that the
first positive quadratic expression occurring with a minus sign in
\eqref{eq:expanded-trace-calculation} is
\[
 \sum_{i,a,p,r}
 \frac{\theta_{p\bar r}}{\lambda_a\lambda_p\lambda_r}
 (\widetilde g_{r\bar a})_i
 \overline{(\widetilde g_{p\bar a})_i}.
\]
In the same coordinates,
$F^{k\bar\ell}=\theta_{\ell\bar k}/(\lambda_k\lambda_\ell)$ and
\[
 \Lambda_k=\sum_a(\widetilde g_{a\bar a})_k
 =\sum_a(\widetilde g_{k\bar a})_a.
\]
For each $a$, define the vector $\xi_a$ by
$(\xi_a)_k=(\widetilde g_{k\bar a})_a/\lambda_k$. The weighted
Cauchy--Schwarz inequality for the positive Hermitian matrix $\theta$
is
\[
 \left(\sum_a\lambda_a\right)
 \left(\sum_a\frac{|\xi_a|_\theta^2}{\lambda_a}\right)
 \geq\left|\sum_a\xi_a\right|_\theta^2.
\]
It gives
\begin{align}
 F^{k\bar\ell}\Lambda_k\Lambda_{\bar\ell}
 &=\sum_{k,\ell}\theta_{\ell\bar k}
 \left(\sum_a\frac{(\widetilde g_{k\bar a})_a}{\lambda_k}\right)
 \overline{\left(\sum_b
 \frac{(\widetilde g_{\ell\bar b})_b}{\lambda_\ell}\right)}
 \notag\\
 &\leq\left(\sum_a\lambda_a\right)
 \sum_a\frac1{\lambda_a}\sum_{k,\ell}
 \theta_{\ell\bar k}
 \frac{(\widetilde g_{k\bar a})_a}{\lambda_k}
 \overline{\frac{(\widetilde g_{\ell\bar a})_a}{\lambda_\ell}}
 \notag\\
 &\leq\Lambda\sum_{i,a,p,r}
 \frac{\theta_{p\bar r}}{\lambda_a\lambda_p\lambda_r}
 (\widetilde g_{r\bar a})_i
 \overline{(\widetilde g_{p\bar a})_i}.
\label{eq:trace-gradient-CS}
\end{align}
Here the last inequality holds because the
last sum contains all the terms with $i=a$.  Thus the first negative
quadratic term in \eqref{eq:expanded-trace-calculation} absorbs its
final gradient term.

The second positive quadratic expression occurring with a minus sign
in \eqref{eq:expanded-trace-calculation} becomes
\[
 \sum_{i,r,p,a}\frac{\theta_{p\bar a}}
 {\lambda_r\lambda_p\lambda_a}
 (\widetilde g_{r\bar p})_i
 \overline{(\widetilde g_{r\bar a})_i},
\]
whereas the mixed term becomes
\[
 2\operatorname{Re}\sum_{i,r,p}
 \frac{(\widetilde g_{r\bar p})_i}{\lambda_r\lambda_p}
 (\theta_{p\bar r})_{\bar i}.
\]
For every fixed $i,r$, complete the square.  Written out completely,
the resulting nonnegative quantity is
\begin{align*}
0\leq\sum_{i,r}\frac1{\lambda_r}\Biggl\{&
 \sum_{p,a}\theta_{p\bar a}
 \frac{(\widetilde g_{r\bar p})_i}{\lambda_p}
 \overline{\frac{(\widetilde g_{r\bar a})_i}{\lambda_a}}
 -2\operatorname{Re}\sum_p
 \frac{(\widetilde g_{r\bar p})_i}{\lambda_p}
 (\theta_{p\bar r})_{\bar i}\\
&+\sum_{p,a}(\theta_{r\bar p})_i\theta^{p\bar a}
 (\theta_{a\bar r})_{\bar i}\Biggr\}.
\end{align*}
Indeed, the expression in braces is the square, with respect to
$\theta$, of the column whose $p$-th component is
\[
 \frac{\overline{(\widetilde g_{r\bar p})_i}}{\lambda_p}
 -\sum_a\theta^{p\bar a}(\theta_{a\bar r})_{\bar i}.
\]
Rearranging the preceding nonnegative-square identity gives
\begin{align}
&-\sum_{i,r,p,a}\frac{\theta_{p\bar a}}
 {\lambda_r\lambda_p\lambda_a}
 (\widetilde g_{r\bar p})_i
 \overline{(\widetilde g_{r\bar a})_i}
 +2\operatorname{Re}\sum_{i,r,p}
 \frac{(\widetilde g_{r\bar p})_i}{\lambda_r\lambda_p}
 (\theta_{p\bar r})_{\bar i}
 \leq
 \sum_{i,r,p,a}\frac1{\lambda_r}
 (\theta_{r\bar p})_i\theta^{p\bar a}
 (\theta_{a\bar r})_{\bar i}.
\label{eq:mixed-square}
\end{align}
  Combining \eqref{eq:trace-gradient-CS} and
\eqref{eq:mixed-square} with
\eqref{eq:expanded-trace-calculation} therefore yields, at the chosen
normal-coordinate point,
\begin{align}
-\mathcal L\log\Lambda\leq{}&
-\frac1\Lambda g^{i\bar j}\widetilde g^{p\bar q}
 (\theta_{p\bar q})_{i\bar j}
 +\frac1\Lambda\sum_{i,r,p,a}\frac1{\lambda_r}
 (\theta_{r\bar p})_i\theta^{p\bar a}
 (\theta_{a\bar r})_{\bar i}\notag\\
&-\frac1\Lambda F^{k\bar\ell}
 R^{p\bar q}{}_{k\bar\ell}(g)\widetilde g_{p\bar q}.
\label{eq:surviving-coefficients}
\end{align}

We estimate the three terms in
\eqref{eq:surviving-coefficients}.
Lemma~\ref{lem:corrected-reference} gives uniform
$C^2$-bounds for $g$ and $\theta$.  At a $g$-normal point,
first coordinate derivatives are covariant derivatives, while
$(\theta_{p\bar q})_{i\bar j}$ differs from the corresponding second
covariant derivative only by curvature components of $g$ contracted
with $\theta$.  Thus
$|(\theta_{p\bar q})_i|+|(\theta_{p\bar q})_{i\bar j}|\leq C$ in
the above coordinates.  In particular,
\[
 \left|g^{i\bar j}\widetilde g^{p\bar q}
 (\theta_{p\bar q})_{i\bar j}\right|
 =\left|\sum_{i,p}\frac{(\theta_{p\bar p})_{i\bar i}}
 {\lambda_p}\right|
 \leq C\sum_p\frac1{\lambda_p}
 =C\tr_{\widetilde g}g.
\]
For the quadratic coefficient term, positivity of $\theta^{-1}$
gives
\[
 |\theta^{p\bar a}|
 \leq(\theta^{p\bar p}\theta^{a\bar a})^{1/2},
 \qquad
 \sum_{p,a}|\theta^{p\bar a}|
 \leq m\sum_p\theta^{p\bar p}=m\tr_\theta g.
\]
All first derivatives of $\theta$ are uniformly bounded in these
coordinates.  Therefore
\[
 \sum_{i,r,p,a}\frac1{\lambda_r}
 |(\theta_{r\bar p})_i|\,|\theta^{p\bar a}|\,
 |(\theta_{a\bar r})_{\bar i}|
 \leq C\left(\sum_r\frac1{\lambda_r}\right)\tr_\theta g
 =C\tr_{\widetilde g}g\,\tr_\theta g.
\]
Finally, the curvature contraction can also be seen directly in these
coordinates.  For every vector $\xi$,
\[
 \left|R^{p\bar q}{}_{k\bar\ell}(g)
 \widetilde g_{p\bar q}\xi^k\overline{\xi^\ell}\right|
 =\left|\sum_p\lambda_pR^{p\bar p}{}_{k\bar\ell}(g)
 \xi^k\overline{\xi^\ell}\right|
 \leq C\Lambda|\xi|_g^2.
\]
 Hence, as Hermitian forms in
$k,\bar\ell$,
$-R^{p\bar q}{}_{k\bar\ell}(g)\widetilde g_{p\bar q}
\leq C\Lambda g_{k\bar\ell}$.
Contracting with the positive tensor $F^{k\bar\ell}/\Lambda$
bounds the curvature term by $C\mathcal S$.  We have proved
\begin{equation}\label{eq:raw-trace-bound}
 -\mathcal L\log\Lambda
 \leq C(1+\mathcal S)
 +\frac{C}{\Lambda}\tr_{\widetilde g}g
       \bigl(1+\tr_\theta g\bigr).
\end{equation}
Every constant used so far is uniform as $t\downarrow0$.  Notice
that the only inverse of the degenerating form $\theta$ occurs in
the explicitly retained factor $\tr_\theta g$.

It remains to use the divisor weight.  The equation says that the
nonnegative eigenvalues of $\widetilde g^{-1}\theta$ have sum $d$,
so each is at most $d$, i.e. $\theta\leq d\widetilde g$.
Taking inverses reverses the order, and therefore
\[
 \widetilde g^{-1}\leq d\theta^{-1},
 \qquad
 \tr_{\widetilde g}g\leq d\tr_\theta g.
\]
On the other hand, \eqref{eq:B-weighted-hatC} says
$\theta\geq c_1|s|^{2\gamma}g$, whence
$\theta^{-1}\leq c_1^{-1}|s|^{-2\gamma}g^{-1}$.  Taking the trace
against $g$, and using the uniform upper bound for $d=d_t$, gives
\[
 \tr_\theta g\leq C|s|^{-2\gamma},
 \qquad
 \tr_{\widetilde g}g\leq d\tr_\theta g
 \leq C|s|^{-2\gamma}.
\]
Recall that $|s|\leq1$.  Hence the last term in
\eqref{eq:raw-trace-bound} satisfies
\[
 \frac{C}{\Lambda}\tr_{\widetilde g}g
       (1+\tr_\theta g)
 \leq\frac{C}{\Lambda}|s|^{-4\gamma}.
\]
If the right-hand side is larger than $1$, then, after enlarging $C$,
\eqref{eq:weighted-alternative} holds.  Otherwise
$-\mathcal L\log\Lambda\leq C(1+\mathcal S)$.

Finally \eqref{eq:Theta-hatC-w} and the equation give the exact identity
\begin{equation}\label{eq:Lw}
 \mathcal L w_t
 =\mathcal F_t^{i\bar j}(\Theta_t-\widehat C_t)_{i\bar j}
 =d_t-\mathcal S_t.
\end{equation}
At an interior maximum, $\mathcal L Q_t\leq0$. Combining the last two
displays proves \eqref{eq:maximum-inequality}.
\end{proof}

We can now prove the uniform $C^2$-estimate away from the exceptional
divisor.
\begin{theorem}
\label{thm:normal-C2}
For every compact set $K\Subset \widetilde Y\setminus E$, there is a constant $C_K$,
independent of sufficiently small $t>0$, such that
\begin{equation}\label{eq:C2-bound}
 \Theta_t\leq C_K f^*A_D\qquad\text{on }K.
\end{equation}
More precisely, there exist constants $C,N>0$, independent of $t$,
such that
\begin{equation}\label{eq:global-weighted-C2}
 \tr_{\widehat C_t}\Theta_t
 \leq C|s_E|_{h}^{-N}
 \qquad\text{on }\widetilde Y\setminus E.
\end{equation}
\end{theorem}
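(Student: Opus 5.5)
The plan is a maximum principle argument for the quantity $Q_t=\log\Lambda_t-Aw_t$ of Lemma~\ref{lem:weighted-trace}, with $A$ chosen large but independent of $t$. Once \eqref{eq:global-weighted-C2} holds, \eqref{eq:C2-bound} follows at once. On a compact set $K\Subset\widetilde Y\setminus E$ we have $|s_E|_h\geq c_K>0$. Moreover $\widehat C_t\leq C_1\kappa$, and $f^*A_D\geq c'_K\kappa$ on $K$ because $f$ is an isomorphism over $Y_{\reg}$ and $K$ can be taken inside $f^{-1}(Y_{\reg})$, where $f^*A_D$ is positive definite. If the exceptional divisor $E$ is chosen to contain the whole non-isomorphism locus, this covers every such $K$.

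\emph{Step 1: the maximum of $Q_t$ is interior.} For fixed $t$, $\Theta_t$ and $\widehat C_t$ are smooth and positive on all of $\widetilde Y$, so $\Lambda_t$ is bounded. On the other hand $w_t=u_t-\varepsilon\rho_0\to+\infty$ near $E$, since $\rho_0=\log|s_E|_h^2\to-\infty$. Hence $Q_t\to-\infty$ at $E$, and $Q_t$ attains its maximum at some point $x_0\in\widetilde Y\setminus E$. Lemma~\ref{lem:weighted-trace} then applies at $x_0$.

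\emph{Step 2: exclude the second alternative for $A$ large.} Suppose \eqref{eq:maximum-inequality} holds at $x_0$. I would invoke the standard subsolution mechanism (Song--Weinkove, Székelyhidi's $\mathcal C$-subsolution lemma) in the uniform form provided by $P_{B_t}(\widehat C_t)\leq d_t-\eta_1$ from Lemma~\ref{lem:corrected-reference}. In $\Theta_t$-diagonal coordinates, and with $\theta=B_t$, that form gives the following: either $\Lambda_t$ lies below a threshold, or
\[
 \mathcal S_t-d_t\geq\theta_0\bigl(1+\mathcal S_t\bigr)
\]
for a fixed $\theta_0>0$.

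The threshold is the main obstacle. Because $B_t$ degenerates near $E$, it will not be uniform. I expect it to be of the form $C|s_E|_h^{-N_0}$, obtained by combining the equation $\sum_i (B_t)_{i\bar i}/\lambda_i=d_t$ with the weighted lower bound \eqref{eq:B-weighted-hatC}. The latter gives $\lambda_i\geq c|s_E|^{2\gamma}$, which controls how the eigenvalue splitting interacts with the degenerate direction.

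Granting this, in the second case we get
\[
 0\leq C(1+\mathcal S_t)-A\theta_0(1+\mathcal S_t),
\]
which is impossible once $A>C/\theta_0$. Fix such an $A$; it is independent of $t$ and of $\varepsilon$.

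\emph{Step 3: conclude.} At $x_0$ we therefore have $\Lambda_t\leq C|s_E|_h^{-N_1}$ with $N_1=\max\{4\gamma,N_0\}$. Using $-Aw_t=-Au_t+A\varepsilon\rho_0$ and the uniform bound $\|u_t\|_{L^\infty}\leq C$ from Lemma~\ref{lem:uniform-oscillation}, we obtain
\[
 Q_t(x_0)\leq C+AC+(2A\varepsilon-N_1)\log|s_E|_h(x_0).
\]
Here $\varepsilon$ is constrained only by the smallness in Lemma~\ref{lem:corrected-reference}, so it cannot be enlarged at will. Instead I would enlarge $A$, which is allowed in Step 2 since $A$ only needs to exceed $C/\theta_0$, until $2A\varepsilon\geq N_1$. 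Since $\log|s_E|_h\leq0$, this makes the last term nonpositive, so $Q_t\leq C$ everywhere.

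Hence
\[
 \Lambda_t\leq Ce^{Aw_t}\leq C'e^{-A\varepsilon\rho_0}=C'|s_E|_h^{-2A\varepsilon},
\]
which is \eqref{eq:global-weighted-C2} with $N=2A\varepsilon$. All constants are uniform in $t$.
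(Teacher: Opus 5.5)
Your Steps 1 and 3 match the paper's argument. The maximum of $Q_t$ is interior because $w_t\to+\infty$ along $E$. You then enlarge $A$ until $2A\varepsilon$ absorbs the exponent and read off $N=2A\varepsilon$. Your reduction of \eqref{eq:C2-bound} to \eqref{eq:global-weighted-C2} is also the paper's. The gap is Step 2, which is where the estimate actually happens. You assert a dichotomy (either $\Lambda_t$ is below a threshold, or $\mathcal S_t-d_t\geq\theta_0(1+\mathcal S_t)$) and then grant it, with a conjectured non-uniform threshold $C|s_E|_h^{-N_0}$. Nothing in the proposal proves this.

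The mechanism you cite does not supply it off the shelf. Sz\'ekelyhidi's lemma needs the operator to be a symmetric function of eigenvalues relative to the reference metric. Relative to $\widehat C_t$, the operator $\tr_{\Theta_t}B_t$ is not of that form. Relative to $B_t$, the eigenvalues of the subsolution $\widehat C_t$ blow up near $E$ as $t\downarrow0$, so that lemma's constants are not uniform in $t$. The bound $\lambda_i\geq c|s_E|_h^{2\gamma}$ you point to is a lower bound on eigenvalues and cannot give an upper bound on $\Lambda_t$.

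The missing step is a direct computation, and it shows the threshold is in fact uniform.

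At $x_0$ take coordinates with $\widehat C_t=I$ and $\Theta_t=\operatorname{diag}(\lambda_1,\ldots,\lambda_m)$, and put $\mu_i=(B_t)_{i\bar i}$. Then $\sum_i\mu_i/\lambda_i=d_t$ and $\mathcal S_t=\sum_i\mu_i/\lambda_i^2$. The cone margin of Lemma~\ref{lem:corrected-reference}, evaluated on the coordinate hyperplane omitting the $k$-th direction, gives $d_t-\sum_{j\neq k}\mu_j\geq\eta_1$.

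Dividing \eqref{eq:maximum-inequality} by $A$ gives $\mathcal S_t-d_t\leq\delta(1+\mathcal S_t)$ with $\delta=C/A$. Choose $A$ so that $\delta\leq\eta_1/4$ and $\delta\sup_t d_t\leq\eta_1/4$. Using the equation, this inequality becomes $d_t-2\sum_i\mu_i/\lambda_i+(1-\delta)\sum_i\mu_i/\lambda_i^2\leq\delta$. For $j\neq k$ use $(1-\delta)\mu_j/\lambda_j^2-2\mu_j/\lambda_j\geq-\mu_j/(1-\delta)$, then apply the margin:
\[
 \delta\;\geq\; d_t-\frac{d_t-\eta_1}{1-\delta}-2\frac{\mu_k}{\lambda_k}\;\geq\;\frac{3\eta_1}{4}-2\frac{\mu_k}{\lambda_k}.
\]
Hence $\mu_k/\lambda_k\geq\eta_1/4$ for every $k$. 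Since $B_t\leq C\widehat C_t$ gives $\mu_k\leq C$, you get $\lambda_k\leq 4C/\eta_1$, so $\Lambda_t(x_0)\leq C$ uniformly in $t$.

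Only the upper bound on $B_t$ is used here. Its degeneration enters only through the first alternative \eqref{eq:weighted-alternative} of Lemma~\ref{lem:weighted-trace}. So your $N_0$ is $0$, and with this step inserted your Step 3 goes through unchanged.
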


\begin{proof}
By Lemma~\ref{lem:uniform-oscillation}, normalize the solutions so that
$\sup u_t=0$ and $|u_t|\leq C_0$. For each fixed $t>0$, the function
$\Lambda_t$ extends smoothly and is bounded on all of $\widetilde Y$.
Since $\rho_0\to-\infty$ along $E$, one has
$w_t\to+\infty$, and therefore
\[
 Q_t=\log\Lambda_t-Aw_t\longrightarrow-\infty
\]
along $E$. For every fixed $t>0$, $Q_t$ consequently
attains its global maximum at a point $x_t\in\widetilde Y\setminus E$.

Suppose first that the weighted alternative
\eqref{eq:weighted-alternative} does not hold at $x_t$.  From
\eqref{eq:maximum-inequality},
\begin{equation}\label{eq:S-upper}
 -d_t+\left(1-\frac CA\right)\mathcal S_t
 \leq\frac CA.
\end{equation}
Choose $A$ so large that, with $\delta=C/A$,
\[
 \delta\sup_t d_t\leq\frac{\eta_1}{4},
 \qquad
 \frac CA\leq\frac{\eta_1}{4}.
\]

At $x_t$, choose coordinates in which
$\widehat C_t=I$ and
$\Theta_t=\operatorname{diag}(\lambda_1,\ldots,\lambda_m)$, and
write $\mu_i=(B_t)_{i\bar i}$.  Then
\begin{equation}\label{eq:eigen-equation-S}
 \sum_i\frac{\mu_i}{\lambda_i}=d_t,
 \qquad
 \mathcal S_t=\sum_i\frac{\mu_i}{\lambda_i^2}.
\end{equation}
The strict cone inequality in \eqref{eq:hatC-uniform}, evaluated on the
coordinate hyperplane omitting the $k$-th direction, gives
\begin{equation}\label{eq:coordinate-cone-margin}
 d_t-\sum_{j\ne k}\mu_j\geq\eta_1
 \qquad(1\leq k\leq m).
\end{equation}
Using the first identity in \eqref{eq:eigen-equation-S}, inequality
\eqref{eq:S-upper} implies
\[
 d_t-2\sum_i\frac{\mu_i}{\lambda_i}
 +(1-\delta)\sum_i\frac{\mu_i}{\lambda_i^2}
 \leq\frac{\eta_1}{4}.
\]
For every $j\ne k$, complete the square:
\[
 (1-\delta)\frac{\mu_j}{\lambda_j^2}
 -2\frac{\mu_j}{\lambda_j}
 =\mu_j\left(
   \frac{\sqrt{1-\delta}}{\lambda_j}
   -\frac1{\sqrt{1-\delta}}
  \right)^2-\frac{\mu_j}{1-\delta}.
\]
Discarding these squares and only the nonnegative term
$(1-\delta)\mu_k/\lambda_k^2$ from the $k$th summand, and then using
\eqref{eq:coordinate-cone-margin}, yields
\[
 \frac{\eta_1}{4}
 \geq d_t-\frac{d_t-\eta_1}{1-\delta}
       -2\frac{\mu_k}{\lambda_k}
 =\frac{\eta_1-d_t\delta}{1-\delta}
       -2\frac{\mu_k}{\lambda_k}.
\]
Our choice of $A$ gives
\[
 \frac{\mu_k}{\lambda_k}\geq\frac{\eta_1}{4}.
\]
Since $B_t\leq C\widehat C_t$, all $\mu_k\leq C$, and hence
all $\lambda_k\leq C$.  Thus $\Lambda_t(x_t)\leq C$.

In this case, using $\rho_0\leq0$ and $u_t\geq-C_0$,
\[
 Q_t(x_t)\leq C-Au_t(x_t)+A\varepsilon\rho_0(x_t)\leq C.
\]
If instead \eqref{eq:weighted-alternative} holds at $x_t$, then
\[
 Q_t(x_t)
 \leq C-4\gamma\log|s_E|_{h}
      -Au_t(x_t)+A\varepsilon\rho_0(x_t).
\]
After increasing $A$, which preserves all preceding requirements, we
may assume $A\varepsilon\geq2\gamma$.  Since
$\rho_0=2\log|s_E|_{h}\leq0$, the logarithmic
term in the last display is nonpositive.  Hence again
$Q_t(x_t)\leq C$.

For arbitrary $x\in\widetilde Y\setminus E$, maximality and $u_t\leq0$ now imply
\[
 \log\Lambda_t(x)
 \leq C+Aw_t(x)
 \leq C-A\varepsilon\rho_0(x).
\]
This is \eqref{eq:global-weighted-C2} with
$N=2A\varepsilon$.  On a fixed $K\Subset \widetilde Y\setminus E$, the section
$s_E$ is bounded away from zero and
$f^*A_D$ is uniformly equivalent to $\widehat C_t$.  Therefore
\eqref{eq:C2-bound} follows.
\end{proof}

For a class $\gamma=f^*\eta$, where $\eta$ is represented on $Y$
by the restriction of a smooth ambient closed real $(1,1)$-form,
define
\begin{align}
 L_{\widetilde Y}(\gamma)
 &:=\int_{\widetilde Y}
 \bigl(\cab(f^*a)^{m-1}
 -(m-1)f^*b(f^*a)^{m-2}\bigr)\gamma,
\label{eq:L-resolution}\\
Q_{\widetilde Y}(\gamma)
 &:=\int_{\widetilde Y}
 \bigl(\cab(f^*a)^{m-2}
 -(m-2)f^*b(f^*a)^{m-3}\bigr)\gamma^2.
\label{eq:Q-resolution}
\end{align}
The projection formula gives
\[
 L_{\widetilde Y}(f^*\eta)=L_D(\eta),
 \qquad
 Q_{\widetilde Y}(f^*\eta)=Q_D(\eta).
\]
Moreover, the pullback $f^*:H^{1,1}(Y,\R)\to
H^{1,1}(\widetilde Y,\R)$ is injective: if a smooth representative
$\eta_0$ has exact pullback, pushing an exact primitive forward and
using the dimension principle as below shows that $\eta_0$ is exact as
a current, hence is zero in cohomology by \cite{BH69}. Therefore
Proposition~\ref{prop:strict-hodge} follows from the following
statement on the resolution $\widetilde Y$.
\begin{proposition}
\label{prop:strict-hodge-blowup}
One has
\[
 L_{\widetilde Y}(\gamma)=0\quad\Longrightarrow\quad Q_{\widetilde Y}(\gamma)\leq0,
\]
and equality holds only if $\gamma=0$ in
$H^{1,1}(\widetilde Y,\R)$.
\end{proposition}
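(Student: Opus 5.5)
We will deduce the statement from the smooth $J$-Hodge inequality, Proposition~\ref{prop:smooth-strict-hodge}, applied on $\widetilde Y$ to the approximating pairs $(a_t,b_t)$, and then pass to the limit $t\downarrow0$. For fixed $t>0$ the forms $\Theta_t$ and $B_t$ of \eqref{eq:resolution-J} are K\"ahler on the compact manifold $\widetilde Y$ and satisfy $d_t\Theta_t^m=mB_t\wedge\Theta_t^{m-1}$. Given $\gamma=f^*\eta$ with $L_{\widetilde Y}(\gamma)=0$, we first correct it to $\gamma_t:=\gamma-s_t a_t$, where $s_t$ is chosen so that $L_t(\gamma_t)=0$ with $L_t=d_ta_t^{m-1}-(m-1)b_ta_t^{m-2}$. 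Since $L_t\cdot a_t=(d_t-(m-1)d_t/m)a_t^m>0$ stays bounded below and $L_t\cdot\gamma\to L_{\widetilde Y}(\gamma)=0$ (all the classes converge and $d_t\to\cab$), we get $s_t\to0$. Proposition~\ref{prop:smooth-strict-hodge} then gives $Q_t(\gamma_t)\le0$, and letting $t\to0$ yields $Q_{\widetilde Y}(\gamma)\le0$, because $Q_t(\gamma_t)$ is a polynomial in convergent cohomological data.

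The real work is the equality case. The smooth proof gives more than the sign. Let $G_t$ be the representative of $\gamma_t$ with $G_t\wedge L(\Theta_t,B_t)=0$, i.e.\ $\tr_{\Theta_t}(B_tG_t)=0$. Then
\[
 -Q_t(\gamma_t)=\binom m2^{-1}\int_{\widetilde Y}\tr_{\Theta_t}(B_tG_t^2)\,\Theta_t^m=:\mathcal E_t\ge0 .
\]
If $Q_{\widetilde Y}(\gamma)=0$, then $\mathcal E_t\to0$. To use this we fix a compact set $K\Subset\widetilde Y\setminus E$. On $K$, Theorem~\ref{thm:normal-C2} bounds $\Theta_t$ above, and the equation gives $\Theta_t\ge d_t^{-1}B_t\ge c_KB_0$, with $B_0$ positive on $K$. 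So $\Theta_t$ is uniformly equivalent to $\kappa$ on $K$. Together with Lemma~\ref{lem:uniform-oscillation}, Evans--Krylov and Schauder theory give $C^\infty_{\rm loc}$ bounds, and a subsequence $\Theta_t\to\Theta_0$ smoothly on $\widetilde Y\setminus E=f^{-1}(Y_\reg)$. The limit satisfies $\tr_{\Theta_0}f^*B_D=\cab$ there. Since $\int_K|G_t|^2_\kappa\,\kappa^m\le C_K\mathcal E_t\to0$, the forms $G_t$ tend to $0$ in $L^2_{\rm loc}$ off $E$.

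It remains to turn this into $\gamma=0$. We would write $G_t=H_t+\ddc v_t$, with $H_t=H-s_tC_t$ smooth and convergent, where $H$ is a fixed smooth representative of $\gamma$. Off $E$, the function $v_t$ solves $\ddc v_t\wedge L(\Theta_t,B_t)=-H_t\wedge L(\Theta_t,B_t)+G_t\wedge L(\Theta_t,B_t)$. Local elliptic estimates would give $v_t\to v_0$ modulo constants, with $H+\ddc v_0=0$ on $f^{-1}(Y_\reg)$. Descending to $Y$, $\eta=-\ddc(v_0\circ f^{-1})$ on $Y_\reg$. Since $\operatorname{codim}Y_\sing\ge2$ by normality, we would extend $v_0$ as a bounded (quasi-)psh-difference function across $Y_\sing$, using Riemann extension for $\eta$-psh functions bounded from above. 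This gives $\eta=-\ddc v$ as currents on $Y$, so $\eta=0$ by \cite{BH69} and $\gamma=f^*\eta=0$.

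We expect the main obstacle to be uniform control of $v_t$, in particular a global bound preventing $v_t$ from concentrating near $E$. The plan is to compare with the $\kappa$-Laplacian: normalize $\int v_t\,\kappa^m=0$ and use the positivity of $L(\Theta_t,B_t)$, which degenerates only like a power of $|s_E|_h$ by \eqref{eq:B-weighted-hatC} and \eqref{eq:global-weighted-C2}. A weighted Poincar\'e inequality should then give an $L^1$ bound, and interior estimates finish the argument. If this fails, a fallback is to work with the closed currents $\lim G_t$ directly: they vanish off $E$, so their limit is a current supported on $E$, i.e.\ a combination of exceptional divisors by the support theorem. Since $\gamma$ is a pullback, its pairing with curves contracted by $f$ vanishes, and negativity of exceptional divisors (as in the proof of Lemma~\ref{lem:finite-face}) would force that combination to be zero.
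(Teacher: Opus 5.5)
\paragraph{Where the proposal breaks.} Your proof of the inequality, and your equality-case argument up to $G_t\to0$ in $L^2_{\rm loc}(\widetilde Y\setminus E)$, follow the paper. The smooth convergence $\Theta_t\to\Theta_0$ is not needed. The gap is the last step, from this local vanishing to $\gamma=0$.

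Your main route needs $v_t$ to converge on $U:=\widetilde Y\setminus E$, which requires uniform control of the potentials up to $E$. The weighted Poincar\'e inequality you hope for is not available in general. The degeneracy exponents of $L(\Theta_t,B_t)$ coming from \eqref{eq:B-weighted-hatC} and \eqref{eq:global-weighted-C2} have no reason to be small, and Poincar\'e inequalities with weight $|s_E|_h^{N}$ fail for large $N$. For instance, near a smooth branch $\{z_1=0\}$, the function $|z_1|^{-a}$ with $2\le a<N/2$ has finite $|z_1|^N$-weighted Dirichlet energy but is not in $L^1$.

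Your fallback has the same defect. $\mathcal E_t\to0$ controls $G_t$ only in an $L^2$ norm whose weight vanishes to high order along $E$. So you have no mass bound for $G_t$ near $E$, and no weak limit of the $G_t$ on $\widetilde Y$ to which a support theorem could be applied.

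\paragraph{The missing step.} No control near $E$ is needed. Let $\Psi$ be a closed $(2m-2)$-form with compact support in $U$. In your notation $G_t=H-s_tC_t+\ddc v_t$, Stokes gives $\int\ddc v_t\wedge\Psi=0$. Hence
\[
 \int_{\widetilde Y}H\wedge\Psi
 =\int_{\widetilde Y}G_t\wedge\Psi+s_t\int_{\widetilde Y}C_t\wedge\Psi\longrightarrow0 .
\]
This uses only the $L^2$ convergence of $G_t$ on $\operatorname{supp}\Psi$ and $s_t\to0$.

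Poincar\'e duality $H^2(U,\R)\cong H^{2m-2}_c(U,\R)^*$ then gives $\gamma|_U=0$ in de Rham cohomology. Lemma~\ref{lem:support-decomposition} now writes $H=\sum_ic_i[E_i]+dR$ as currents. Push this forward:
\begin{itemize}
\item $f_*H=\eta_0$;
\item $f_*[E_i]=0$, because $\dim f(E_i)\le m-2$ by normality of $Y$.
\end{itemize}
Hence $\eta_0=d(f_*R)$, so $\eta=0$ by \cite{BH69}, and $\gamma=f^*\eta=0$. Alternatively, once you know $\gamma$ lies in the span of the classes $\{E_i\}$, your negativity-lemma argument would also finish.
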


We use the following support decomposition, whose proof is given in
Appendix~\ref{sec:support-decomposition}. Write
$E_{\mathrm{red}}=\bigcup_i E_i$, where every $E_i$ is smooth.

\begin{lemma}\label{lem:support-decomposition}
If $H$ is a smooth closed real $(1,1)$-form on $\widetilde Y$ whose
restriction to $\widetilde Y\setminus E$ has vanishing de Rham class,
then there exist real numbers $c_i$ and a degree-one current $R$ on
$\widetilde Y$ such that
\[
 H=\sum_i c_i[E_i]+dR.
\]
\end{lemma}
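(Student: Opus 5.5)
The plan is to argue purely topologically: use the exact sequence of the pair $(\widetilde Y,\widetilde Y\setminus E)$ and then translate the result into currents through de Rham's theorem. Let $\dim_{\C}\widetilde Y=m$, and work with real coefficients throughout. The long exact sequence of the pair gives
\[
 H^2(\widetilde Y,\widetilde Y\setminus E;\R)\longrightarrow H^2(\widetilde Y;\R)\longrightarrow H^2(\widetilde Y\setminus E;\R).
\]
By hypothesis, the de Rham class $[H]\in H^2(\widetilde Y;\R)$ maps to zero in $H^2(\widetilde Y\setminus E;\R)$. So $[H]$ is the image of some relative class.

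Next I identify the relative group. The space $\widetilde Y$ is a compact oriented $2m$-manifold and $E_{\mathrm{red}}$ is a compact subset that is a finite CW complex, since it is a compact analytic subset. Lefschetz--Alexander duality therefore gives
\[
 H^2(\widetilde Y,\widetilde Y\setminus E;\R)\cong H_{2m-2}(E_{\mathrm{red}};\R).
\]
Under this isomorphism, the composite map into $H^2(\widetilde Y;\R)\cong H_{2m-2}(\widetilde Y;\R)$ is the pushforward of homology classes followed by Poincaré duality. Because $E_{\mathrm{red}}$ has real dimension $2m-2$ and its singular locus $\bigcup_{i\neq j}E_i\cap E_j$ has real dimension at most $2m-4$, the top homology $H_{2m-2}(E_{\mathrm{red}};\R)$ is the free real vector space on the fundamental classes $[E_i]$. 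This follows from Mayer--Vietoris, or from the standard fact that the top homology of a pure-dimensional compact analytic space is spanned by its irreducible components. Hence there are real numbers $c_i$ such that $[H]=\sum_i c_i\,\mathrm{PD}[E_i]$ in $H^2(\widetilde Y;\R)$.

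Finally I pass to currents. The Poincaré dual of $[E_i]$ is represented in de Rham cohomology of currents by the integration current $[E_i]$; equivalently, the Poincaré--Lelong formula gives $[E_i]=\theta_{E_i}+\ddc\log|s_{E_i}|^2$. The de Rham cohomology of currents is isomorphic to that of smooth forms. Therefore the closed current $H-\sum_i c_i[E_i]$ is cohomologous to zero, and so it equals $dR$ for some degree-one current $R$. This is exactly the claimed decomposition.

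The main obstacle is the middle step. One must justify the duality isomorphism, and check that it is compatible with the maps in the exact sequence, so that the image of $H^2(\widetilde Y,\widetilde Y\setminus E)$ consists precisely of the classes dual to cycles supported on $E$. One must also check that $H_{2m-2}(E_{\mathrm{red}};\R)$ is spanned by the components. I would handle the second point inductively on the number of components. Each $E_i$ is a compact connected complex manifold, so $H_{2m-2}(E_i;\R)=\R$. Mayer--Vietoris then applies because the pairwise intersections have vanishing homology in degrees $2m-3$ and $2m-2$, which holds since their real dimension is at most $2m-4$.
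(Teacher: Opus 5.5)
Your proposal is correct, and it takes a different route from the paper.

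The paper's appendix proves the lemma by hand, in three steps.
\begin{enumerate}
\item It takes a primitive $\xi$ of $H$ on $\widetilde Y\setminus E$. It defines a residue $r_j$ as the period of $\xi-b$ ($b$ a local primitive of $H$) over a meridian circle around $E_j^\circ$.
\item It cancels the residues by setting $c_j=-r_j$ and adding $\sum_j c_j\,d^c\log|s_j|_{h_j}^2$ to $\xi$.
\item It shows that a residue-free primitive can be corrected by some $dF$ to a smooth one-form $A$ on all of $\widetilde Y$. This uses that $H_1\bigl((\Delta^*)^q\times\Delta^{m-q}\bigr)$ is generated by meridians, plus a partition-of-unity gluing.
\end{enumerate}
Poincar\'e--Lelong then gives $R=A-\sum_j c_j\,d^c\log|s_j|_{h_j}^2$.

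You package the same information cohomologically. Exactness of the pair sequence puts $[H]$ in the image of $H^2(\widetilde Y,\widetilde Y\setminus E;\R)\cong H_{2m-2}(E_{\mathrm{red}};\R)$. Mayer--Vietoris identifies the latter with $\bigoplus_i\R[E_i]$, and the de Rham theorem for currents finishes.

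The step you flag as the main obstacle is standard. The cohomological form of the duality follows from the usual $H_q(\widetilde Y,\widetilde Y\setminus E;\R)\cong H^{2m-q}(E_{\mathrm{red}};\R)$ by dualizing over $\R$, since everything is finite-dimensional. Alternatively, excise to a regular neighborhood of $E$ and use Lefschetz duality for a manifold with boundary. Compatibility with pushforward is the naturality of the cap product.

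What each route buys:
\begin{itemize}
\item Yours is shorter and more conceptual. It hardly uses the normal-crossing structure, since components of higher codimension are invisible in $H_{2m-2}$.
\item Yours does lean on black boxes: duality with its naturality, triangulability of analytic sets for the Mayer--Vietoris step, and the identification of the Poincar\'e dual of $E_i$ with the integration current $[E_i]$ (equivalently, with $c_1(\cO(E_i))$).
\item The paper's argument uses only the Poincar\'e lemma and the local topology of an SNC complement. It identifies the $c_j$ concretely as minus the residues of a primitive of $H|_{\widetilde Y\setminus E}$.
\item Once you apply Poincar\'e--Lelong as you indicate, your $R$ has the same explicit form: a smooth one-form minus $\sum_i c_i\,d^c\log|s_{E_i}|^2$.
\end{itemize}
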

\begin{proof}[Proof of Proposition~\ref{prop:strict-hodge-blowup}]

For the solution $\Theta_t$, set
\begin{align}
L_{\widetilde Y,t}
 &:=
 d_t\Theta_t^{m-1}-(m-1)B_t\wedge\Theta_t^{m-2},
\label{eq:Lambda}\\
Q_{\widetilde Y,t}
 &:=
 d_t\Theta_t^{m-2}-(m-2)B_t\wedge\Theta_t^{m-3}.
\label{eq:Omega-prime}
\end{align}
The equation $\tr_{\Theta_t}B_t=d_t$ implies that
$L_{\widetilde Y,t}$ is a strictly positive closed
$(m-1,m-1)$-form.
For a real $(1,1)$-class $\xi$, we use the induced functionals
\[
 L_{\widetilde Y,t}(\xi)
 :=\int_{\widetilde Y}L_{\widetilde Y,t}\wedge\xi,
 \qquad
 Q_{\widetilde Y,t}(\xi)
 :=\int_{\widetilde Y}Q_{\widetilde Y,t}\wedge\xi^2.
\]

Choose a smooth ambient representative $\eta_0$ of $\eta$. Then
$H_0:=f^*\eta_0$ is a smooth representative of $\gamma$. Put
\[
 s_t:=\frac{L_{\widetilde Y,t}(\gamma)}{L_{\widetilde Y,t}(a_t)},\qquad
 \gamma_t:=\gamma-s_ta_t.
\]
The denominator is
\[
 L_{\widetilde Y,t}(a_t)=b_t\cdot a_t^{m-1}>0,
\]
and $s_t\to0$ because $(d_t,a_t,b_t)\to(\cab,f^*a,f^*b)$ in
cohomology, $L_{\widetilde Y}(f^*a)=L_D(a)>0$ and $L_{\widetilde Y}(\gamma)=0$.
Since $L_{\widetilde Y,t}(\gamma_t)=0$, the elliptic equation
\[
 (H_0-s_t\Theta_t+\ddc v_t)\wedge L_{\widetilde Y,t}=0
\]
has a smooth solution, unique up to a constant.  Indeed,
$v\mapsto(\ddc v\wedge L_{\widetilde Y,t})/\Theta_t^m$ is
self-adjoint and elliptic, its kernel consists of the constants, and
$L_{\widetilde Y,t}(\gamma_t)=0$ is its Fredholm compatibility
condition, exactly as in Proposition~\ref{prop:smooth-strict-hodge}.
Write
\[
 G_t:=H_0-s_t\Theta_t+\ddc v_t.
\]

For the pointwise calculation, take $\Theta_t$-unitary coordinates
in which $B_t=\operatorname{diag}(\lambda_1,\ldots,\lambda_m)$.
The $J$-equation is
\[
 \sum_i\lambda_i=d_t,
\]
and $G_t\wedge L_{\widetilde Y,t}=0$ is
\[
 \tr_{\Theta_t}(B_tG_t)=0.
\]
Expand
\[
 \det(I+uG_t)
 \left(d_t-\tr\bigl((I+uG_t)^{-1}B_t\bigr)\right).
\]
The constant and linear coefficients vanish.  The quadratic coefficient
is $-\tr(B_tG_t^2)$.  Comparing with the wedge expansion gives the
exact identity
\begin{equation}\label{eq:pointwise-hodge}
 G_t^2\wedge Q_{\widetilde Y,t}
 =
 -\frac1{\binom{m}{2}}
 \tr_{\Theta_t}(B_tG_t^2)\,\Theta_t^m\leq0.
\end{equation}
Integration and Stokes' theorem yield
\begin{equation}\label{eq:energy-identity}
 Q_{\widetilde Y,t}(\gamma_t)
 =
 -\frac1{\binom{m}{2}}
 \int_{\widetilde Y}
 \tr_{\Theta_t}(B_tG_t^2)\,\Theta_t^m
 \leq0.
\end{equation}
Letting $t\downarrow0$ proves $Q_{\widetilde Y}(\gamma)\leq0$.

Suppose now that $Q_{\widetilde Y}(\gamma)=0$. Then the nonnegative
integral in \eqref{eq:energy-identity} tends to zero. On
every $K\Subset f^{-1}(Y_{\reg})$, $B_t$ is uniformly equivalent to
a fixed background metric, while Theorem~\ref{thm:normal-C2} gives a
uniform upper bound for $\Theta_t$. Moreover, the equation implies
$B_t\leq d_t\Theta_t$, because each eigenvalue of
$\Theta_t^{-1}B_t$ is bounded by their sum $d_t$. Thus $\Theta_t$ also
has a uniform lower bound on $K$. The energy density in
\eqref{eq:energy-identity} is therefore uniformly equivalent on $K$
to $|G_t|^2$ times a fixed volume form. Hence
\begin{equation}\label{eq:G-L2}
 G_t\longrightarrow0\quad\text{in }L^2(K).
\end{equation}

Let $\Psi$ be any compactly supported closed form of degree $2m-2$
on $f^{-1}(Y_{\reg})$.  Stokes' theorem, $s_t\to0$, the local metric
bound, and \eqref{eq:G-L2} give
\[
 \int_{\widetilde Y}H_0\wedge\Psi
 =
 \lim_{t\downarrow0}\int_{\widetilde Y}G_t\wedge\Psi=0.
\]
Poincar\'e duality with compact supports therefore shows that
\[
 \gamma|_{f^{-1}(Y_{\reg})}=0.
\]
Because $\operatorname{Supp}(E)=\widetilde Y\setminus
f^{-1}(Y_{\reg})$, Lemma~\ref{lem:support-decomposition} gives, as
currents,
\[
 H_0=\sum_i c_i[E_i]+dR.
\]
For every smooth test form \(\Phi\),
\[
\begin{aligned}
\langle f_*H_0,\Phi\rangle
&=\int_{\widetilde Y}f^*(\eta_0\wedge\Phi)
=\int_{f^{-1}(Y_{\reg})}f^*(\eta_0\wedge\Phi)=\int_{Y_{\reg}}\eta_0\wedge\Phi
=\langle\eta_0,\Phi\rangle.
\end{aligned}
\]
We get $f_*H_0=\eta_0$ as currents. Moreover,
$f_*[E_i]=0$: since $f_*[E_i]$ is closed positive $(1,1)$-current, supporting on $f(E_i)$, which has complex dimension at most $m-2$
(becasue $Y$ is normal), the support theorem \cite[Cahter III, Corollary 2.14]{D12} implies $f_*[E_i]=0$. Pushing forward $H_0=\sum_{i}c_i[E_i]+dR$ yields
\[
 \eta_0=d(f_*R).
\]
By the de Rham theorem for currents on reduced complex spaces
\cite{BH69}, exactness as a current means that the de Rham class
$\eta$ vanishes. Consequently,
$\gamma=f^*\eta=0$.

\end{proof}

\subsection{Strict negative definiteness}
\begin{proposition}
    \label{prop:M-negative}
Assume that $(\alpha,\beta)$ is $J$-big and
$P_\chi(\omega)\leq\cab$. Then the matrix $(M_{ij})$ in
\eqref{eq:M} is negative definite.
\end{proposition}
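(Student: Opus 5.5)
The plan is to combine the sign information of Lemma~\ref{lem:matrix-signs} with the singular $J$-Hodge inequality, Proposition~\ref{prop:strict-hodge}, applied on each null divisor. First we reduce to a positive kernel vector. By Lemma~\ref{lem:matrix-signs}, $M$ is negative semidefinite with nonnegative off-diagonal entries. If $x\neq0$ satisfies $x^\top Mx=0$, then
\[
0=x^\top Mx\leq|x|^\top M|x|\leq 0 .
\]
So $|x|$ also attains the maximum $0$ of the quadratic form, and semidefiniteness gives $M|x|=0$. Let $I\subset\{1,\dots,N\}$ be the support of $a:=|x|$. Then the principal submatrix $M_I$ has a strictly positive kernel vector. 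Lemma~\ref{lem:matrix-signs}(iii) only uses $J$-nefness and the nullity of the divisors, so it applies verbatim to the subfamily $\{D_i\}_{i\in I}$. With $E=\sum_{i\in I}a_iD_i$ it gives $q_3\cdot E^3\geq0$, together with $q_2\cdot E\cdot D_i=0$ for $i\in I$.

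Second, we get the opposite sign from Proposition~\ref{prop:strict-hodge}. For $i\in I$, let $h_i:D_i^\nu\to X$ be the normalization, and put $m=n-1$ and $\gamma_i:=h_i^*\{E\}$. The class $\gamma_i$ is the restriction of a smooth ambient form, hence admissible. By the projection formula,
\[
L_{D_i}(\gamma_i)=\bigl(\cab\alpha^{n-2}-(n-2)\beta\alpha^{n-3}\bigr)\cdot E\cdot D_i=q_2\cdot E\cdot D_i=0,
\]
\[
Q_{D_i}(\gamma_i)=q_3\cdot E^2\cdot D_i .
\]
Proposition~\ref{prop:strict-hodge} therefore gives $q_3\cdot E^2\cdot D_i\leq0$. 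Summing with weights $a_i>0$ yields $q_3\cdot E^3\leq0$. Comparing with the first step forces equality in every term, so the equality case of Proposition~\ref{prop:strict-hodge} gives $h_i^*\{E\}=0$ for all $i\in I$.

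Third, we turn this into a contradiction with Proposition~\ref{prop:radical-obstruction}. The class $\{E\}$ is represented by the positive current $[E]$, which is smooth (indeed zero) outside $\operatorname{Supp}E=\bigcup_{i\in I}D_i$. On each component, the pulled-back class $h_i^*\{E\}$ vanishes and is in particular nef. I would invoke the Demailly--P\u{a}un criterion \cite{DP04}, in the form: a pseudoeffective class represented by a current with analytic singularities, whose restriction to every component of the polar set is nef, is nef. This gives that $\{E\}$ is nef, hence modified nef. It is nonzero, since $\{E\}\cdot\alpha^{n-1}>0$. It satisfies $z\cdot\{E\}=\sum_i a_i\,z\cdot D_i=0$. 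This contradicts Proposition~\ref{prop:radical-obstruction}, which applies because Lemma~\ref{lem:only-divisors} gives $J$-nefness. Hence $\ker M=0$, and $M$ is negative definite.

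The main obstacle is the nefness step. The vanishing of the restriction must be upgraded to genuine nefness of $\{E\}$ on a K\"ahler (non-projective) manifold. The divisors may be singular and the restriction lives on normalizations, so one has to check that the DP04/P\u{a}un extension argument works through $h_i$. This means gluing local smooth potentials near $\operatorname{Supp}E$ via regularized maxima with $\log|s_E|^2$, in the spirit of the gluing in Lemma~\ref{lem:easy-direction-inclusion}. As a fallback, it may suffice to show only that $\{E\}$ is modified nef: the divisorial Zariski decomposition of $\{E\}$ has zero negative part whenever $E|_{D_i}\equiv0$ for every component. This is closer to the $\MN$ hypothesis actually needed.
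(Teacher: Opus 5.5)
\textbf{Steps 1 and 2.} These are sound. Your reduction in Step 1 is a clean substitute for the paper's block decomposition plus \cite[Lemma 10(ii)]{3d}: from $x^\top Mx=0$ you get $M|x|=0$ by semidefiniteness and then pass to the support of $|x|$. Step 2 is exactly the paper's use of Proposition~\ref{prop:strict-hodge}.

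\textbf{The gap is Step 3, which you leave open.} The criterion you quote needs $\{E\}|_{D_i}$ to be nef on the components $D_i$ of the polar set themselves. That means nef in the sense of smooth potentials on the possibly non-normal space $D_i$. What you have is only $h_i^*\{E\}=0$ on $D_i^\nu$. On $\widetilde D_i$ you can write $g_i^*\theta_E=\ddc\psi_i$, and $\psi_i$ is constant on the connected fibres of $f_i$. But over a point of $D_i$ with several preimages in $D_i^\nu$, the fibre of $g_i$ is disconnected. There $\psi_i$ may take different constants on the different pieces, so it need not descend to $D_i$. Descending nefness through the finite map $h_i$, and then extending and gluing near $\operatorname{Supp}E$, is precisely what you defer.

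Your fallback is not an easier route either. Every component of $E$ is null, and $N(\{E\})\le E$. So $Z(\{E\})$ is a modified nef class in the span of null divisors, and Proposition~\ref{prop:radical-obstruction} forces $Z(\{E\})=0$, hence $N(\{E\})=E$. Proving ``zero negative part'' is therefore the whole contradiction, not an intermediate step, and you give no argument for it.

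\textbf{How the paper closes it.} The paper avoids nefness on singular spaces entirely and uses only the cohomological vanishing.
\begin{enumerate}
\item Pushing forward, $g_{i*}g_i^*\theta_E$ represents $\{D_i\}\cdot\{E\}$. So $h_i^*\{E\}=0$ gives $\{D_i\}\cdot\{E\}=0$ in $H^{2,2}(X,\R)$, and hence $\{E\}^2=0$.
\item Let $V$ be irreducible of dimension $p$. If $V\not\subset\operatorname{Supp}E$, then $E\cdot\alpha^{p-1}\cdot V\ge0$ by effectivity.
\item If $V\subset D_i$, pick a component $W$ of $g_i^{-1}(V)$ dominating $V$ and resolve it. The push-forward of $\pi^*\kappa_i^r$ is $d[V]$ with $d>0$. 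Exactness of $g_i^*\theta_E$ then gives $E\cdot\alpha^{p-1}\cdot V=0$.
\item Since $\{E\}^2=0$, for every $V$ (including $V=X$) and every $t>0$,
\[
(E+t\alpha)^p\cdot V=t^p\alpha^p\cdot V+pt^{p-1}E\cdot\alpha^{p-1}\cdot V>0 .
\]
So the whole ray $E+t\alpha$ lies in the Demailly--P\u{a}un numerical positivity set.
\item The ray contains K\"ahler classes for $t\gg0$, so by connectedness it lies in the K\"ahler cone. Hence $E$ is nef, and in particular modified nef.
\end{enumerate}
The missing idea is the identity $\{E\}^2=0$ combined with the numerical characterization of the K\"ahler cone. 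This replaces the analytic extension argument through non-normal divisors that your Step 3 would require.
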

\begin{proof}
Lemma~\ref{lem:only-divisors} first shows that $(\alpha,\beta)$ is
$J$-nef. For each null divisor $D_i$, fix a smooth representative
$\theta_{D_i}\in c_1(\cO(D_i))$.

By Lemma~\ref{lem:matrix-signs}, $M$ is negative semidefinite and
has nonnegative off-diagonal entries. Suppose that it is singular.
After a simultaneous permutation of rows and
columns, $M$ can be written as the direct sum of some irreducible blocks.
At least one irreducible block $M_B$ is singular. 
Restrict to this block and relabel its divisors and matrix as
$D_1,\ldots,D_r$ and $M$. By \cite[Lemma 10(ii)]{3d}, there exists $a\in \mathbb{R}_{>0}^r$ such that $Ma=0$.
Put $E=\sum_i a_iD_i$ on that block.
Define the smooth representative
\[
 \theta_E:=\sum_i a_i\theta_{D_i}\in\{E\}.
\]

For each $D_i$, let
\[
 \widetilde D_i\xrightarrow{\,f_i\,}D_i^\nu
 \xrightarrow{\,h_i\,}D_i,\qquad
 g_i=h_i\circ f_i.
\]
Set
\[
 \eta_i:=h_i^*\{E\}\in H^{1,1}(D_i^\nu,\R),
 \qquad
 \gamma_i:=f_i^*\eta_i=g_i^*\{E\}
 \in H^{1,1}(\widetilde D_i,\R).
\]
By the cohomological projection formula,
\[
 L_{D_i}(\eta_i)=q_2\cdot E\cdot D_i=(Ma)_i=0.
\]
Proposition~\ref{prop:strict-hodge} gives
\begin{equation}\label{eq:Qi-nonpositive}
 Q_{D_i}(\eta_i)=q_3\cdot E^2\cdot D_i\leq0.
\end{equation}
On the other hand, Lemma~\ref{lem:matrix-signs}\textup{(iii)} gives
\[
 0\leq q_3\cdot E^3
 =\sum_i a_i\,q_3\cdot E^2\cdot D_i
 =\sum_i a_iQ_{D_i}(\eta_i)\leq0.
\]
Every $a_i$ is positive, so every term in
\eqref{eq:Qi-nonpositive} is zero.  The strict equality case in
Proposition~\ref{prop:strict-hodge} therefore gives
\begin{equation}\label{eq:E-restriction-zero}
 \eta_i=0\quad\text{in }H^{1,1}(D_i^\nu,\R),
 \qquad
 \gamma_i=0\quad\text{in }H^{1,1}(\widetilde D_i,\R).
\end{equation}

If \(\theta_E\) is a smooth representative of \(\{E\}\), the projection
formula gives that for any smooth closed test forms $\Psi$\[
\begin{aligned}
\int_X g_{i*}(g_i^*\theta_E)\wedge\Psi
&=\int_{\widetilde D_i}g_i^*\theta_E\wedge g_i^*\Psi\\
&=\int_{D_{i,\reg}}\theta_E\wedge\Psi\\
&=\int_X\theta_E\wedge\theta_{D_i}\wedge\Psi.
\end{aligned}
\]Therefore
\[
 \{D_i\}\{E\}
 =g_{i*}(g_i^*\theta_E)=g_{i*}(\gamma_i)=0
 \quad\hbox{in }H^{2,2}(X,\R).
\]
Hence
\begin{equation}\label{eq:E-square-zero}
 E^2=\sum_i a_iD_iE=0.
\end{equation}

We next show that $E$ is nef. Let $V\subseteq X$ be an irreducible
analytic subvariety of dimension $p>0$, including the possibility
$V=X$. If $V\not\subset\operatorname{Supp}(E)$, then $E\cdot V$ is an
effective cycle, so
\[
 E\cdot\alpha^{p-1}\cdot V\geq0.
\]
If $V\subset\operatorname{Supp}(E)$, choose $D_i$ containing $V$.
The inverse image $g_i^{-1}(V)$ need not be irreducible, but it has an
irreducible component $W$ satisfying $g_i(W)=V$. Fix such a
$W$, write $\dim W=p+r$, and take a resolution
$\pi:\widehat W\to W$ which is biholomorphic over $W_{\reg}$.
We regard $\pi$ as mapping to
$\widetilde D_i$ through the inclusion of $W$, and put
\[
 h:=g_i\circ\pi:\widehat W\longrightarrow V.
\]
If $\kappa_i$ is a K\"ahler form on $\widetilde D_i$, the current
$T:=h_*(\pi^*\kappa_i^r)$ is defined by
\[
 \langle T,\Phi\rangle
 :=\int_{\widehat W}\pi^*\kappa_i^r\wedge h^*\Phi
\]
for smooth $(p,p)$-test forms $\Phi$ on $X$.  It is a closed
positive current of bidimension $(p,p)$, supported on $V$.
The support theorem
\cite[Chapter~III, Corollary~(2.14)]{D12} therefore gives
$T=d[V]$ with $d\geq0$.  In fact $d>0$.  Indeed, dominance of
$h$ gives a point outside the exceptional locus of $\pi$ at which
$dh$ has rank $p$.  The integrand below is nonnegative everywhere
and strictly positive near such a point, so
\[
 \langle T,\omega^p\rangle
 =\int_{\widehat W}\pi^*\kappa_i^r\wedge h^*\omega^p>0.
\]
Thus $T\ne0$, and hence $d>0$.

By \eqref{eq:E-restriction-zero}, $h^*\theta_E$ is exact. Since
$\widehat W$ is compact, Stokes' theorem and the definition of $T$
give
\begin{align*}
0
&=\int_{\widehat W}
 h^*\theta_E\wedge h^*\omega^{p-1}\wedge
 \pi^*\kappa_i^r\\
&=\langle T,\theta_E\wedge\omega^{p-1}\rangle
 =d\int_V\theta_E\wedge\omega^{p-1}.
\end{align*}
Consequently
\[
 E\cdot\alpha^{p-1}\cdot V=0.
\]
Since $E^2=0$, for every $t>0$,
\begin{equation}\label{eq:E-plus-alpha}
 (E+t\alpha)^p\cdot V
 =
 t^p\alpha^p\cdot V
 +pt^{p-1}E\cdot\alpha^{p-1}\cdot V>0.
\end{equation}
For large $t$, $E+t\alpha$ is K\"ahler.  The ray $t>0$ lies
entirely in the numerical positivity set of Demailly--P\u{a}un, by
\eqref{eq:E-plus-alpha}.  Their characterization of the K\"ahler cone
as a connected component of that set
\cite[Theorem~0.1]{DP04} therefore shows that
$E+t\alpha$ is K\"ahler for every $t>0$.  Thus $E$ is nef.

The class $E$ is nonzero, since
$\int_X E\wedge\omega^{n-1}>0$, and
\[
 z\cdot E=\sum_i a_i\,z\cdot D_i=0.
\]
Hence $E$ is a nonzero nef class, and therefore modified nef. This
contradicts Proposition~\ref{prop:radical-obstruction}. Thus
$(M_{ij})$ is negative definite.

\end{proof}
\begin{corollary}
    \label{cor:inward}
There are positive rational numbers $e_i$ such that, for
$E_0=\sum_{i=1}^N e_iD_i$,
\begin{equation}\label{eq:inward}
 q_2\cdot E_0\cdot D_j<0\qquad(1\leq j\leq N).
\end{equation}
\end{corollary}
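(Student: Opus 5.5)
We will derive the corollary purely from linear algebra applied to the matrix $M$ of \eqref{eq:M}. By Proposition~\ref{prop:M-negative}, $M$ is negative definite. By Lemma~\ref{lem:matrix-signs}(i), its off-diagonal entries are nonnegative. Consequently $-M$ is a symmetric positive definite matrix with nonpositive off-diagonal entries, that is, a nonsingular symmetric $M$-matrix (a Stieltjes matrix). The plan is to show that $(-M)^{-1}$ has nonnegative entries and positive diagonal, and then to take
\[
 e:=(-M)^{-1}\mathbf 1,
 \qquad\text{so that}\qquad
 Me=-\mathbf 1<0 .
\]
Since $(Me)_j=\sum_i e_i\,q_2\cdot D_i\cdot D_j=q_2\cdot E_0\cdot D_j$, this gives \eqref{eq:inward} for $E_0=\sum_i e_iD_i$ with real coefficients.

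To prove the inverse-positivity, we choose $c>0$ larger than every $|M_{ii}|$ and write $-M=cI-P$, where $P:=cI+M$ has nonnegative entries. Positive definiteness of $-M$ means every eigenvalue of the symmetric matrix $P$ is less than $c$. The largest eigenvalue of $P$ equals its spectral radius by Perron--Frobenius, so $\rho(P)<c$. The Neumann series then gives
\[
 (-M)^{-1}=\sum_{k\geq0}c^{-k-1}P^k ,
\]
which is entrywise nonnegative with diagonal at least $c^{-1}>0$. Hence every $e_i=\sum_j\bigl((-M)^{-1}\bigr)_{ij}\geq\bigl((-M)^{-1}\bigr)_{ii}>0$, so $e$ is a strictly positive real vector with $Me<0$.

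Finally, to obtain rational coefficients, we use that the conditions $e_i>0$ and $(Me)_j<0$ are finitely many strict linear inequalities. The set of solutions is therefore open in $\R^N$ and nonempty, so it contains a rational vector; we take the $e_i$ to be its entries. The only step that needs any care is the inverse-positivity argument, in particular checking that $\rho(P)<c$ holds without assuming $M$ is irreducible. The Perron--Frobenius statement used above, that the spectral radius of a nonnegative matrix is an eigenvalue, holds for every nonnegative matrix, so no irreducibility hypothesis is needed and there is no real obstacle.
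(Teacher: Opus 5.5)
Your proof is correct. Proposition~\ref{prop:M-negative} and Lemma~\ref{lem:matrix-signs}(i) make $-M$ a nonsingular Stieltjes matrix, and the Neumann series gives $(-M)^{-1}\geq 0$ with positive diagonal. So $e=(-M)^{-1}\mathbf 1>0$ satisfies $Me=-\mathbf 1$, and openness of the strict inequalities then yields rational $e_i$.

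The paper gives no argument of its own beyond citing \cite[Corollary~13]{3d}, which presumably rests on the same Perron--Frobenius/$M$-matrix linear algebra. Your write-up is a self-contained version of that argument and needs no reduction to irreducible blocks.
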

\begin{proof}
The proof is the same as \cite[Corollary 13]{3d}
\end{proof}

\subsection{A strictly positive deformation of the
\texorpdfstring{$J$}{J}-slope}

Let $E_0$ be the divisor given by Corollary~\ref{cor:inward}.
\begin{lemma}
    \label{lem:wall-perturb}
For all sufficiently small $s>0$, and then all sufficiently small
$\varepsilon>0$ chosen after fixing $s$, the class
\begin{equation}\label{eq:residual-class}
 \alpha_{s,\varepsilon}
 :=\alpha-s\{E_0\}-\varepsilon\beta
\end{equation}
has the following properties:
\begin{enumerate}[label=\textup{(\roman*)}]
\item $\alpha_{s,\varepsilon}$ and
      $r_{s,\varepsilon}:=\cab\alpha_{s,\varepsilon}-\beta$
      are K\"ahler;
\item for every proper irreducible subvariety $V\subset X$, with
      $p=\dim V$,
      \begin{equation}\label{eq:strict-residual}
       \int_V\bigl(\cab\alpha_{s,\varepsilon}^p
       -p\beta\alpha_{s,\varepsilon}^{p-1}\bigr)>0;
      \end{equation}
\item For arbitrary real $(1,1)$-classes $r,\eta$, define
      \begin{equation}\label{eq:top-H}
       H_n(r,\eta):=
       \int_X\left[
       r^n
       -\sum_{k=2}^{n-1}(k-1)\binom nk
       \eta^kr^{n-k}\right].
      \end{equation}
      Then $H_n(r_{s,\varepsilon},\beta)>0$.
\end{enumerate}
\end{lemma}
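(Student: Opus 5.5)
The plan is to verify the three properties by continuity from $(s,\varepsilon)=(0,0)$. Property (ii) is the only one that needs real work: it is an infinite family of inequalities that becomes an equality exactly on the null divisors $D_1,\ldots,D_N$ at $s=\varepsilon=0$.

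\emph{Step (i).} The class $\alpha$ is K\"ahler and the K\"ahler cone is open, so $\alpha_{s,\varepsilon}$ is K\"ahler for $s,\varepsilon$ small. For $r_{s,\varepsilon}$, Lemma~\ref{lem:only-divisors}(i) gives $\cab\omega-\chi\geq(n-2)m_0\omega>0$, because $n\geq4$. Hence $\cab\alpha-\beta$ is K\"ahler, and openness again handles small $s,\varepsilon$.

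\emph{Step (iii).} I would simplify $H_n$ by an algebraic identity. With $x=\eta$ and $y=r$, the relations $\sum_k k\binom nk x^ky^{n-k}=nx(x+y)^{n-1}$ and $\sum_{k\geq1}\binom nk x^ky^{n-k}=(x+y)^n-y^n$ should give
\[
 H_n(r,\eta)=\int_X\bigl[(r+\eta)^n-n\eta(r+\eta)^{n-1}+(n-1)\eta^n\bigr].
\]
Taking $r=r_{s,\varepsilon}$ and $\eta=\beta$, so that $r+\eta=\cab\alpha_{s,\varepsilon}$, this becomes
\[
 H_n(r_{s,\varepsilon},\beta)=\cab^{n-1}\bigl(\cab\alpha_{s,\varepsilon}^n-n\beta\alpha_{s,\varepsilon}^{n-1}\bigr)+(n-1)\beta^n .
\]
The first term vanishes at $s=\varepsilon=0$ by definition of $\cab$, and $\beta^n>0$. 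Continuity in $(s,\varepsilon)$ then gives (iii).

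\emph{Step (ii), dimensions $p\leq n-2$.} Here I would work pointwise. Take the representative $\omega_{s,\varepsilon}=\omega-s\theta_{E_0}-\varepsilon\chi$, which is $C^0$-close to $\omega$. Then \eqref{eq:lower-strict} persists in the form $\cab\omega_{s,\varepsilon}^p-p\chi\wedge\omega_{s,\varepsilon}^{p-1}\geq\tfrac12 m_0\,\omega_{s,\varepsilon}^p>0$ for all $p\leq n-2$ once $s,\varepsilon$ are small. Integrating over $V_{\reg}$ gives \eqref{eq:strict-residual} uniformly in $V$.

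\emph{Step (ii), divisors.} This is the main obstacle, since infinitely many prime divisors must be controlled at once. Write $z_{s,\varepsilon}:=\cab\alpha_{s,\varepsilon}^{n-1}-(n-1)\beta\alpha_{s,\varepsilon}^{n-2}$. I would prove the stronger statement that $z_{s,\varepsilon}\cdot\xi>0$ for every nonzero $\xi\in\Psef(X)$, by working on the compact slice $K=\{\xi\in\Psef(X):\xi\cdot\alpha^{n-1}=1\}$. Expanding,
\[
 z_{s,0}=z-s(n-1)q_2\cdot E_0+O(s^2),
\]
with the $O(s^2)$ term bounded uniformly on $K$.

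By Lemma~\ref{lem:finite-face}, $z\cdot\xi\geq0$ on $K$ (this also uses Lemma~\ref{lem:finite-face} and $J$-nefness), with equality exactly on the compact face $K_0=K\cap\sum_i\R_{\geq0}\{D_i\}$. On $K_0$, Corollary~\ref{cor:inward} gives $q_2\cdot E_0\cdot\xi\leq-c<0$. By continuity there is an open neighbourhood $U\supset K_0$ in $K$ on which $-q_2\cdot E_0\cdot\xi\geq c/2$. On $K\setminus U$ we have $z\cdot\xi\geq c'>0$ by compactness.

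Now bound $z_{s,0}\cdot\xi$ in the two regions. On $U$ it is at least $s(n-1)c/2-Cs^2$, and on $K\setminus U$ it is at least $c'-Cs$. Both are positive for $s$ small, so $\min_K z_{s,0}\cdot\xi>0$. Fixing such an $s$, continuity in $\varepsilon$ on the compact set $K$ keeps $z_{s,\varepsilon}\cdot\xi>0$ for all small $\varepsilon$. Applying this to $\xi=\{V\}$ for each prime divisor $V$ gives \eqref{eq:strict-residual} when $p=n-1$. This completes (ii).
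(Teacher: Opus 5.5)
Your proposal is correct and follows essentially the same route as the paper: openness of the K\"ahler cone for (i), pointwise persistence of \eqref{eq:lower-strict} for $p\leq n-2$, the compact slice $K$ of $\Psef(X)$ with a first-order expansion in $s$ near the null face $K_0$ followed by a perturbation in $\varepsilon$ for divisors, and the identity $H_n(r_{s,\varepsilon},\beta)=\cab^{n-1}\bigl(\cab\alpha_{s,\varepsilon}^n-n\beta\alpha_{s,\varepsilon}^{n-1}\bigr)+(n-1)\beta^n$ for (iii), which is the paper's $G_n$ factorization plus the missing $k=n$ term. The only differences are minor. The paper proves that $K$ is compact, whereas you assume it. The paper also gets $z\cdot\xi\geq0$ on $\Psef(X)$ directly from the smooth semipositive representative $\cab\omega^{n-1}-(n-1)\chi\wedge\omega^{n-2}$, whereas you derive it from the Zariski-decomposition argument in the proof of Lemma~\ref{lem:finite-face} together with $J$-nefness; both justifications are valid.
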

\begin{proof}
Normalize the pseudo-effective cone $\Psef(X)$ by
\[
 K:=\{\xi\in\Psef(X):\alpha^{n-1}\cdot\xi=1\}.
\]
$K$ is compact: the positive functional
$\xi\mapsto\alpha^{n-1}\xi$ is strictly positive on every nonzero
pseudo-effective class.   Fix any Euclidean norm on $H^{1,1}(X,\R)$.  If the slice
were unbounded, there would be $\xi_j\in K$ with
$\|\xi_j\|\to\infty$.  After passing to a subsequence,
$\xi_j/\|\xi_j\|$ would converge to a class
$0\ne\xi_\infty\in\Psef(X)$, while
$\alpha^{n-1}\xi_\infty=0$.  If $T$ is a nonzero closed positive
current representing $\xi_\infty$, however, then
$\int_X T\wedge\omega^{n-1}>0$, a contradiction. Thus $K$ is
bounded; it is closed because $\Psef(X)$ is closed, hence compact.

The function $z\cdot\xi$ is nonnegative on $K$, because $z$ is
represented by the smooth semipositive form
\[
 \cab\omega^{n-1}-(n-1)\chi\wedge\omega^{n-2}.
\]
Its pairing with a positive current representing $\xi$ is therefore
nonnegative. Its zero set $K_0$ is compact and, by
Lemma~\ref{lem:finite-face}, consists of
positive combinations of the $D_i$.
If $K_0=\varnothing$, then $z\cdot\xi$ itself has a positive
minimum on $K$.  In this case take $E_0=0$;  all divisorial
inequalities already hold with $\alpha_s=\alpha$.  The argument following
\eqref{eq:all-divisors-strict} then applies without change. It remains
only to establish \eqref{eq:all-divisors-strict} when
$K_0\ne\varnothing$.
Formula \eqref{eq:inward} then gives
\[
 -q_2E_0\cdot\xi>0\qquad(\xi\in K_0).
\]
It therefore has a positive minimum on $K_0$.

Put $\alpha_s=\alpha-s\{E_0\}$.  Uniformly for $\xi\in K$,
\begin{align}
\bigl(\cab\alpha_s^{n-1}
 -(n-1)\beta\alpha_s^{n-2}\bigr)\cdot\xi=
z\cdot\xi-(n-1)s\,q_2E_0\cdot\xi+O(s^2).
\label{eq:wall-expansion}
\end{align}
On a small neighborhood of $K_0$, the linear term is uniformly
positive.  On the compact complement of that neighborhood, the
zeroth-order term $z\cdot\xi$ has a positive minimum.  Therefore,
for small $s>0$,
\begin{equation}\label{eq:all-divisors-strict}
 \bigl(\cab\alpha_s^{n-1}
 -(n-1)\beta\alpha_s^{n-2}\bigr)\cdot\xi>0
 \quad(0\ne\xi\in\Psef(X)).
\end{equation}

Choose a smooth representative $\theta_{E_0}$ of $\{E_0\}$.
The pointwise strict inequalities \eqref{eq:lower-strict} persist, for
small $s$, after replacing $\omega$ with
$\omega-s\theta_{E_0}$.  Thus all dimensions $p\leq n-2$ remain
strict.  Equation \eqref{eq:all-divisors-strict} handles $p=n-1$.
Also $\alpha-s\{E_0\}$ is K\"ahler for small $s$.
Lemma~\ref{lem:only-divisors}\textup{(i)} says that
$\cab\alpha-\beta$ is K\"ahler,
so $\cab(\alpha-s\{E_0\})-\beta$ remains K\"ahler as well.

Fix such an \(s\) and then choose \(\varepsilon>0\) small.  Openness and
the positive minimum in \eqref{eq:all-divisors-strict} preserve all the
preceding statements for
\(\alpha_{s,\varepsilon}=\alpha_s-\varepsilon\beta\).  This proves
\textup{(i)}--\textup{(ii)}.

For~\textup{(iii)}, use the following polynomial identity. For arbitrary
classes $r,\eta,\tau$ satisfying $r=\cab\tau-\eta$,
\begin{equation}\label{eq:G-factor}
\begin{split}
 G_p(r,\eta)
 &:=
 r^p-\sum_{k=2}^{p}(k-1)\binom pk
       \eta^kr^{p-k}\\
 &=(r+\eta)^{p-1}(r-(p-1)\eta)\\
 &=\cab^{p-1}(\cab\tau^p-p\eta\tau^{p-1}).
\end{split}
\end{equation}
At the unperturbed point $\tau=\alpha$, so that
$r=\cab\alpha-\beta$ and $\eta=\beta$, the definition of $\cab$
gives $G_n(\cab\alpha-\beta,\beta)=0$. Since the missing
$k=n$ term in \eqref{eq:top-H} is
$-(n-1)\beta^n$, one gets
\[
 H_n(\cab\alpha-\beta,\beta)=(n-1)\int_X\beta^n>0.
\]
The top inequality therefore persists for the small perturbation
\eqref{eq:residual-class}.
\end{proof}
We now use Fang--Ma's theorem \cite[Theorem~1.10]{FM24} to turn the
preceding numerical conditions into a strict cone metric.

\begin{lemma}\label{lem:FangMa}
Let $Y$ be a connected compact K\"ahler $n$-fold, with $n\geq3$,
and let
$\widehat\alpha,\widehat\beta$ be K\"ahler classes,
$B\in\widehat\beta$ a K\"ahler form, and $\cab>0$.  Put
$r=\cab\widehat\alpha-\widehat\beta$.
Assume that $r$ is K\"ahler, that
\[
 \int_V G_p(r,\widehat\beta)>0
\]
for every proper irreducible $p$-dimensional subvariety $V$, and
that $H_n(r,\widehat\beta)>0$, where $G_p$ is given by
\eqref{eq:G-factor} and $H_n$ by \eqref{eq:top-H}, with integration
over $Y$ in place of $X$.  Then there is a K\"ahler
form $A\in\widehat\alpha$ such that $P_B(A)<\cab$.
\end{lemma}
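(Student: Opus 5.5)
The plan is to reduce Lemma~\ref{lem:FangMa} to Fang--Ma's existence theorem \cite[Theorem~1.10]{FM24} for a specialized fully nonlinear equation. The unknown is a K\"ahler form $R\in r$, with $B$ as the fixed form, and the target equation is the top-degree identity encoded by $H_n$. Concretely, we seek $R\in r$ solving
\[
 R^n-\sum_{k=2}^{n-1}(k-1)\binom nk B^k\wedge R^{n-k}=c_0\,B^n,
 \qquad c_0:=\frac{H_n(r,\widehat\beta)}{\int_Y B^n}>0.
\]
This equation has the form $R^n=\sum_k c_k B^k\wedge R^{n-k}+c_0B^n$ with all coefficients $c_k\ge0$ for $k\ge2$ and $c_0>0$. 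That is exactly the class of equations treated by Fang--Ma (equivalently, Collins--Sz\'ekelyhidi for nonnegative constant coefficients).

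The hypotheses of \cite[Theorem~1.10]{FM24} are a numerical criterion: for every proper $p$-dimensional $V$,
\[
 \int_V\Bigl(R^p-\sum_k c_k\tbinom{p}{k}\tbinom{n}{k}^{-1}\cdots\Bigr)>0.
\]
Here the restricted polynomial is obtained by the standard degree-lowering rule. I would verify that for our coefficients $(k-1)\binom nk$ the degree-$p$ restriction is $G_p(r,\widehat\beta)$, and check this coefficientwise. The top-degree compatibility holds by the choice of $c_0$, so the assumptions $\int_V G_p>0$ and $r$ K\"ahler supply exactly the required inputs.

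Next I would convert the solution into the cone condition via the factorization \eqref{eq:G-factor}. Set $A:=\cab^{-1}(R+B)\in\widehat\alpha$, which is K\"ahler. Pointwise, diagonalize $B=\operatorname{diag}(\mu_i)$ with respect to $A$. The analogue of \eqref{eq:G-factor} for $(n-1)$-planes shows that, on an $(n-1)$-plane $H$,
\[
 G_{n-1}(R|_H,B|_H)=\cab^{n-2}\bigl(\cab A^{n-1}-(n-1)B\wedge A^{n-2}\bigr)\big|_H .
\]
Fang--Ma's solution satisfies the pointwise \emph{cone condition} for their equation, namely strict positivity of the derivative form
\[
 nR^{n-1}-\sum_k c_k(n-k)B^k\wedge R^{n-k-1}>0 .
\]
This positivity should be identified with $n\,G_{n-1}(R,B)>0$ as a positive $(n-1,n-1)$-form. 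That form is exactly $\cab A^{n-1}-(n-1)B\wedge A^{n-2}>0$, i.e.\ $P_B(A)<\cab$.

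The main obstacle is bookkeeping. First, the derivative identity $\partial_R G_n(R,B)=n\,G_{n-1}(R,B)$ must be checked, which follows from $G_p=(r+\eta)^{p-1}(r-(p-1)\eta)$ by differentiation in $r$. Second, Fang--Ma's subvariety inequalities must match our $G_p$ with precisely those coefficients. If their statement requires strict positivity of a slightly different combination, I would fall back on the factorized form $\cab^{p-1}(\cab\tau^p-p\eta\tau^{p-1})$, which makes positivity on every $V$ transparent. Third, $n\ge3$ is needed so that genuinely nonlinear terms $k\ge2$ appear, and the only $k=n$ discrepancy is absorbed into $c_0$.
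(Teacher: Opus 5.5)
Your proposal is correct and follows the paper's proof essentially step by step. Both use the same Fang--Ma equation for $R\in r$ with top constant $H_n(r,\widehat\beta)/\int_Y\widehat\beta^n$, the same identification of the subvariety conditions with $\int_V G_p(r,\widehat\beta)>0$, the same substitution $A=(R+B)/\cab$, and the same factorization $G_{n-1}(R,B)=\cab^{n-2}\bigl(\cab A^{n-1}-(n-1)B\wedge A^{n-2}\bigr)$.

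The only difference is in how the cone condition for $R$ is obtained. You take it from the conclusion of Fang--Ma's theorem. The paper instead checks it directly, so that it does not depend on the exact form of that conclusion. Diagonalizing $B$ against $R$ turns the equation into $1=\sum_{k=2}^{n-1}(k-1)\sigma_k(\lambda)+f\sigma_n(\lambda)$. The coefficient of the derivative form on the $i$-th coordinate hyperplane is then $1-\sum_{k=2}^{n-1}(k-1)\sigma_k(\lambda_{\hat\imath})=\sum_{k=2}^{n-1}(k-1)\lambda_i\sigma_{k-1}(\lambda_{\hat\imath})+f\sigma_n(\lambda)>0$. Hence every K\"ahler solution lies in the cone automatically.
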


\begin{proof}
Put
\[
 f:=\frac{H_n(r,\widehat\beta)}
          {\int_Y\widehat\beta^n}>0
\]
and consider the closed even differential form
\begin{equation}\label{eq:Lambda-FM}
 \Lambda
 =
 \sum_{k=2}^{n-1}\frac{k-1}{k!}B^k
 +\frac f{n!}B^n.
\end{equation}
Its lower-degree part is $2$-uniformly positive in the sense of
Fang--Ma's Definition~1.1: its lowest nonzero component is the
$(2,2)$-form $B^2/2$, and after subtracting that form every
remaining component is positive.  Thus one may take their reference
metric to be $B$ and their uniform constant to be $1$.  Its top
component $fB^n/n!$ is positive, hence is ``almost positive'' for
every allowed error in their Definition~1.2.  These observations check
hypothesis~(H1) in their Definition~1.4, including its top-degree
condition.

The definition of $f$ is exactly the top cohomological equality for
Fang--Ma's equation with $\kappa=1$.  For a proper
$p$-dimensional $V$, their numerical expression is
\[
 \int_V[e^r(1-\Lambda)]^{[p]}
 =\frac1{p!}\int_V G_p(r,\widehat\beta)>0.
\]
Thus all hypotheses of Fang--Ma's theorem~\cite[Theorem~1.10]{FM24}
are satisfied. More explicitly, the top-dimensional
condition is the equality
\[
 \int_Y[e^r(1-\Lambda)]^{[n]}
 =\frac1{n!}\left(H_n(r,\widehat\beta)
 -f\int_Y\widehat\beta^n\right)=0,
\]
which is precisely the compatibility condition stated before their
equation~(1.2).  The theorem produces a K\"ahler form $R\in r$
solving
\begin{equation}\label{eq:GMA}
 R^n=
 \sum_{k=2}^{n-1}(k-1)\binom nk
 B^k\wedge R^{n-k}+fB^n.
\end{equation}
We verify the elliptic cone directly, so no extra hypothesis on the top
component is implicit. At a point, diagonalize $B$ with respect to
$R$, with eigenvalues $\lambda_1,\ldots,\lambda_n>0$, and let
$\sigma_k$ denote the $k$-th elementary symmetric polynomial. Dividing
\eqref{eq:GMA} by $R^n$, using the standard wedge normalization,
gives
\[
 1=\sum_{k=2}^{n-1}(k-1)\sigma_k(\lambda)
      +f\sigma_n(\lambda).
\]
The coefficient on the coordinate hyperplane omitting the $i$-th
direction of the form in \eqref{eq:GMA-cone} is
\[
 1-\sum_{k=2}^{n-1}(k-1)
 \sigma_k(\lambda_1,\ldots,\widehat\lambda_i,\ldots,\lambda_n).
\]
It is strictly positive: the difference between the right-hand side of the
preceding scalar equation and the sum being subtracted consists of all
positive monomials containing $\lambda_i$, together with
$f\sigma_n(\lambda)>0$.  Hence the solution lies in the elliptic cone
\begin{equation}\label{eq:GMA-cone}
 R^{n-1}
 -\sum_{k=2}^{n-1}(k-1)\binom{n-1}{k}
 B^k\wedge R^{n-1-k}>0.
\end{equation}
The factorization \eqref{eq:G-factor}, with $p=n-1$, identifies the
left side with
\[
 (R+B)^{n-2}\wedge(R-(n-2)B).
\]
Set $A=(R+B)/\cab$.  Then $A\in\widehat\alpha$, and
\[
 \cab A^{n-1}-(n-1)B\wedge A^{n-2}
 =
 \cab^{-(n-2)}
 (R+B)^{n-2}\wedge(R-(n-2)B)>0.
\]
This is precisely $P_B(A)<\cab$.
\end{proof}

We now complete the construction.
\begin{proof}[Proof of Theorem~\ref{thm:main}]
Lemma~\ref{lem:only-divisors} identifies the null locus with the union
of its null prime divisors, and Lemma~\ref{lem:finite-face} makes that
union finite. Proposition~\ref{prop:M-negative} shows that the
intersection matrix is negative
definite.  Corollary~\ref{cor:inward} then supplies the divisor
$E_0=\sum e_iD_i$.
Apply Lemma~\ref{lem:wall-perturb} to obtain the class
\[
 \alpha_{s,\varepsilon}
 =\alpha-s\{E_0\}-\varepsilon\beta.
\]
Apply Lemma~\ref{lem:FangMa} with
$\widehat\alpha=\alpha_{s,\varepsilon}$,
$\widehat\beta=\beta$, $B=\chi$, and
\[
 r=r_{s,\varepsilon}
 =\cab\alpha_{s,\varepsilon}-\beta.
\]
Its hypotheses hold by
Lemma~\ref{lem:wall-perturb}; more explicitly,
\[
\int_V G_p(r,\beta)
=\cab^{p-1}\int_V
 \bigl(\cab\alpha_{s,\varepsilon}^{p}
       -p\beta\alpha_{s,\varepsilon}^{p-1}\bigr)>0.
\]

Thus there is a K\"ahler form
$A\in\alpha_{s,\varepsilon}$ with $P_\chi(A)<\cab$.
Define
\[
 S=A+s\sum_i e_i[D_i].
\]
It is positive and has class $\alpha-\varepsilon\beta$.

We verify the current cone condition directly. Fix coordinate balls
$U'\Subset U\Subset X$.  On $U$, choose a smooth strictly
plurisubharmonic function $a$ such that $A=\ddc a$, and choose a
holomorphic defining function $f_i$ for each $D_i$ meeting $U$.
With the Poincar\'e--Lelong formula, a local
plurisubharmonic potential of $S$ is
\[
 \Phi=a+u,\qquad
 u=s\sum_i e_i\log|f_i|^2,\qquad
 \ddc\Phi=S|_U.
\]
Here divisors not meeting $U$ may simply be omitted.  In particular,
$u$ is plurisubharmonic and locally integrable.
Let $\rho_\delta$ be a standard nonnegative convolution kernel in
these coordinates, with support sufficiently small that convolution is
defined on $U'$, and put
\[
 \Phi_\delta=\Phi*\rho_\delta,\qquad
 A_\delta=\ddc(a*\rho_\delta),\qquad
 G_\delta=\ddc(u*\rho_\delta).
\]
Since $u$ is plurisubharmonic, $G_\delta\geq0$.  Since $a$ is
smooth, $A_\delta\to A$ in $C^\infty(U')$. Thus $A_\delta>0$
for all sufficiently small $\delta$, and hence
\[
 S_\delta:=\ddc\Phi_\delta=A_\delta+G_\delta\geq A_\delta>0.
\]
The strict cone inequality for $A$ has a uniform margin
\[
 \eta:=\cab-\max_X P_\chi(A)>0.
\]
Continuity of $P_\chi$ on the cone of positive Hermitian forms and
the smooth convergence $A_\delta\to A$ imply, after decreasing
$\delta$,
\[
 P_\chi(A_\delta)\leq P_\chi(A)+\frac\eta2
 \leq\cab-\frac\eta2
 \qquad\text{on }U'.
\]
Order reversal of $P_\chi$ gives
$P_\chi(\ddc\Phi_\delta)\leq\cab-\eta/2$ for small $\delta>0$.
This is stronger than the required inequality for every
constant-coefficient form $\chi_0\leq\chi$ in the definition of the
weak cone condition.

Because the $e_i$ are rational, choose $N_0$ with
$N_0e_i\in\mathbb Z$. Then the singular part $u$ can be written as
\[
 u=\frac{s}{N_0}
 \log\left|\prod_i f_i^{N_0e_i}\right|^2
\]
up to a smooth function coming from the choice of local frames. Thus
$S$ has divisorial analytic
singularities, and its positive-Lelong locus is exactly
\[
 E_+(S)=\bigcup_i D_i=\JNull(\alpha,\beta).
\]
The current $T=S+\varepsilon\chi$ belongs to
$\mathcal K_J^{\mathrm{an}}(\alpha,\beta)$ and has the same Lelong
numbers as $S$.  Therefore
\[
 \JEnK(\alpha,\beta)\subset E_+(T)=\JNull(\alpha,\beta).
\]
The reverse inclusion is Lemma~\ref{lem:easy-direction-inclusion}.
This proves the main theorem.
\end{proof}

\section{A numerical characterization of \texorpdfstring{$J$}{J}-bigness}

The definition of $J$-bigness is analytic. Under the smooth boundary
condition, the preceding proof also yields the following numerical
characterization.
\begin{proposition}\label{prop:J-big-characterization}
Assume that $P_\chi(\omega)\leq\cab$ for K\"ahler forms
$\omega\in\alpha$ and $\chi\in\beta$. Then the following are
equivalent:
\begin{enumerate}[label=\textup{(\arabic*)}]
 \item $(\alpha,\beta)$ is $J$-big;
 \item for every $0\neq\xi\in\MN(X)$,
 \[
  \bigl(\cab\alpha^{n-1}-(n-1)\beta\alpha^{n-2}\bigr)\cdot\xi>0.
 \]
\end{enumerate}
\end{proposition}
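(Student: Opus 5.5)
The implication (1)$\Rightarrow$(2) needs only results already proved. By Lemma~\ref{lem:only-divisors}, the boundary condition $P_\chi(\omega)\leq\cab$ makes $(\alpha,\beta)$ $J$-nef. Together with $J$-bigness, Proposition~\ref{prop:radical-obstruction} then says that $z\cdot\gamma=0$ with $\gamma\in\MN(X)$ forces $\gamma=0$, where $z:=\cab\alpha^{n-1}-(n-1)\beta\alpha^{n-2}$. The first paragraph of the proof of Lemma~\ref{lem:finite-face} shows that $z\cdot\gamma\geq0$ on $\MN(X)$. That argument is purely cohomological: if $z\cdot\gamma<0$, add the multiple $s\alpha$ of $\alpha$ that makes the pairing vanish, and the result is a nonzero modified-nef class in the kernel. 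Combining the two facts, $z\cdot\xi>0$ for every nonzero $\xi\in\MN(X)$.

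For (2)$\Rightarrow$(1), I will rerun the construction in the proof of Theorem~\ref{thm:main}, with hypothesis (2) taking the place of Proposition~\ref{prop:radical-obstruction}. The paper's introduction already indicates that this substitution suffices. Concretely, I will check the places where $J$-bigness was used.

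\begin{enumerate}
\item Lemma~\ref{lem:finite-face} used only Proposition~\ref{prop:radical-obstruction}, that is, condition (2). So the null divisors $D_1,\dots,D_N$ are finite in number and span the face $\{z=0\}\cap\Psef(X)$.
\item Lemma~\ref{lem:matrix-signs} and Proposition~\ref{prop:strict-hodge} need only $J$-nefness together with the boundary form.
\item In Proposition~\ref{prop:M-negative}, $J$-bigness entered only at the last step. There one produced a nonzero nef, hence modified-nef, class $E$ with $z\cdot E=0$, and this contradicts (2) directly.
\end{enumerate}

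Hence $M$ is negative definite. Corollary~\ref{cor:inward}, Lemma~\ref{lem:wall-perturb} and Lemma~\ref{lem:FangMa} then produce a K\"ahler form $A\in\alpha-s\{E_0\}-\varepsilon\beta$ with $P_\chi(A)<\cab$. The current $S=A+s\sum_ie_i[D_i]$ has analytic singularities, lies in the class $\alpha-\varepsilon\beta$, and satisfies $P_\chi(S)\leq\cab$; the convolution check is exactly as in the proof of Theorem~\ref{thm:main}. Therefore $T=S+\varepsilon\chi\in\mathcal K_J^{\mathrm{an}}(\alpha,\beta)$, so the pair is $J$-big. If there are no null divisors, take $E_0=0$; this case is already handled inside Lemma~\ref{lem:wall-perturb}.

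The main obstacle is the bookkeeping in (2)$\Rightarrow$(1): I must make sure that no hidden use of $J$-bigness remains in the chain, for instance in Lemma~\ref{lem:matrix-signs} or in the strict Hodge inequality on the null divisors. I expect none, since those results were stated under $J$-nefness alone. The one substantive check is that Boucksom's exceptional-family argument in Lemma~\ref{lem:finite-face} needs only that the positive span of the null divisors meets $\MN(X)$ only in $0$. Hypothesis (2) gives this immediately, because every null divisor has $z\cdot D_i=0$.
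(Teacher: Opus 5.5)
Your proposal is correct and follows the paper's proof essentially step for step. For (1)$\Rightarrow$(2) you use Proposition~\ref{prop:radical-obstruction} together with the nonnegativity argument that opens the proof of Lemma~\ref{lem:finite-face}, and for (2)$\Rightarrow$(1) you rerun the proof of Theorem~\ref{thm:main} with (2) replacing Proposition~\ref{prop:radical-obstruction}, both in Lemma~\ref{lem:finite-face} and at the final step of Proposition~\ref{prop:M-negative}; your check that the remaining ingredients (Lemma~\ref{lem:matrix-signs}, Proposition~\ref{prop:strict-hodge}, Corollary~\ref{cor:inward}, Lemma~\ref{lem:wall-perturb}, Lemma~\ref{lem:FangMa}) need only $J$-nefness and the boundary condition is exactly what the paper relies on.
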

\begin{proof}
If $(\alpha,\beta)$ is $J$-big, assertion~\textup{(2)} follows from
Proposition~\ref{prop:radical-obstruction} and the nonnegativity
argument at the start of the proof of Lemma~\ref{lem:finite-face}.

Conversely, assume~\textup{(2)}. The smooth boundary condition and
Lemma~\ref{lem:only-divisors} give $J$-nefness and show that all null
subvarieties are divisors. In the proof of Lemma~\ref{lem:finite-face},
assumption~\textup{(2)} replaces
Proposition~\ref{prop:radical-obstruction}; hence there are finitely
many null divisors. Lemma~\ref{lem:matrix-signs} still applies. In the
proof of Proposition~\ref{prop:M-negative}, a kernel of $M$ produces a
nonzero nef class $E$ with $z\cdot E=0$, contradicting~\textup{(2)}.
Thus $M$ is negative definite. Corollary~\ref{cor:inward},
Lemma~\ref{lem:wall-perturb}, and Lemma~\ref{lem:FangMa} then construct
a K\"ahler form $A\in\alpha-s\{E_0\}-\varepsilon\beta$ satisfying
$P_\chi(A)<\cab$. Consequently,
\[
 S=A+s\sum_i e_i[D_i],
 \qquad T=S+\varepsilon\chi,
\]
belongs to $\cK_J^{\mathrm{an}}(\alpha,\beta)$ by the local convolution
argument in the proof of Theorem~\ref{thm:main}. Hence
$(\alpha,\beta)$ is $J$-big.
\end{proof}

\appendix
\section{Proof of the divisor-support decomposition}
\label{sec:support-decomposition}

We believe the lemma should be standard to experts, but we can not find an exact reference. The following elementary proof is generated by Chatgpt 5.6 Sol, and clarified by the author. 

Put $U=\widetilde Y\setminus E$ and
$E_{\mathrm{red}}=\bigcup_i E_i$. An SNC coordinate polydisc centered
at $x\in E$ is a coordinate neighborhood $(V;z_1,\ldots,z_m)$
biholomorphic to $\Delta^m$ such that
$E\cap V=\{z_1\cdots z_q=0\}$ and each hypersurface $\{z_a=0\}$ is
the intersection with $V$ of a component $E_i$. Put
$E_i^\circ=E_i\setminus\bigcup_{j\ne i}E_j$. If $x\in E_i^\circ$
and $E_i\cap V=\{z_1=0\}$, define the positively oriented meridian
\[
 \ell_{i,x,r}(\vartheta)
 =(re^{\sqrt{-1}\vartheta},0,\ldots,0),
 \qquad0\leq\vartheta\leq2\pi.
\]

\begin{proof}[Proof of Lemma~\ref{lem:support-decomposition}]

\emph{Step 1: definition of the residues.}
Since $[H|_U]=0$, there is a smooth real one-form $\xi$ on $U$
such that $d\xi=H|_U$.  Fix $j$, $x\in E_j^\circ$, and an SNC
coordinate polydisc $V$ as above.  Shrinking $V$ if necessary, we
have $E\cap V=E_j\cap V=\{z_1=0\}$.  Because $V$ is contractible,
the Poincar\'e lemma gives a smooth real one-form $b$ on $V$ with
$db=H|_V$.  The form $\xi-b$ is therefore closed on
$V\setminus E_j$.  Define
\[
 r_j(x):=\int_{\ell_{j,x,r}}(\xi-b).
\]
This number is independent of $r$, because the loops
$\ell_{j,x,r}$ are homologous in $V\setminus E_j$.  It is
independent of the choice of $b$: if $db'=H|_V$, then $b-b'$ is
closed on $V$, and its integral over $\ell_{j,x,r}$, which bounds a
disc in $V$, is zero.  The same homology argument under a change of
SNC coordinates shows that $r_j(x)$ is intrinsic; the complex
orientation fixes its sign.

If $x$ varies in $E_j^\circ$, nearby circles are homologous in
the complement of $E$, so $r_j(x)$ is locally constant.
The manifold $E_j^\circ$ is connected: $E_j$ is connected and the
removed set is a proper complex-analytic subset, of real codimension at
least two.  Thus $r_j(x)$ is a single number, denoted by $r_j$.
Notice also that
\[
 r_j=\lim_{r\downarrow0}\int_{\ell_{j,x,r}}\xi ,
\]
because the smoothness of $b$ gives
$\left|\int_{\ell_{j,x,r}}b\right|=O(r)$.

\emph{Step 2: cancellation of the residues.}
Choose a smooth Hermitian metric $h_j$ on
$\mathcal O_{\widetilde Y}(E_j)$, let $s_j$ be the defining section,
and let $\theta_j$ be its curvature form. By the
Poincar\'e--Lelong formula,
\[
 [E_j]=\theta_j+\ddc\log|s_j|_{h_j}^2
\]
and we put
\[
 \lambda_j=d^c\log|s_j|_{h_j}^2.
\]
The one-form $\lambda_j$ is smooth on $\widetilde Y\setminus E_j$ and
locally integrable on $\widetilde Y$; locally its singular coefficients are
$O(|z_1|^{-1})$, which are integrable in the two real normal
directions.  At $x\in E_j^\circ$, write
$s_j=z_1e_j$ in the coordinates used above, where $e_j$ is a
nonvanishing holomorphic frame.  Then
\[
 \int_{\ell_{j,x,r}}\lambda_j
 =\int_{\ell_{j,x,r}}d^c\log|z_1|^2+O(r)
 =1+O(r).
\]
Here $\int_{|z_1|=r}d^c\log|z_1|^2=1$ is equivalent to the
chosen Poincar\'e--Lelong normalization.
If $k\ne j$, then $\lambda_k$ is smooth near
$x\in E_j^\circ$, so its integral over $\ell_{j,x,r}$ is $O(r)$.

Set $c_j=-r_j$ and, on $U$, define
\[
 \xi_0=\xi+\sum_jc_j\lambda_j,\qquad
 H_0=H-\sum_jc_j\theta_j.
\]
Because $d\lambda_j=-\theta_j$ on $U$, one has
\[
 d\xi_0=H_0|_U.
\]
The preceding period calculation gives, for every $j$,
\[
 \lim_{r\downarrow0}
 \int_{\ell_{j,x,r}}\xi_0=r_j+c_j=0.
\]

\emph{Step 3: smooth extension of the residue-free primitive.}
We prove the following precise local-to-global assertion. If $K$ is
a smooth closed two-form on $\widetilde Y$, and if a smooth one-form
$\zeta$ on $U$ satisfies $d\zeta=K|_U$ and has zero residue along
every $E_j$, where the residue is defined exactly as in Step~1 with
$(H,\xi)$ replaced by $(K,\zeta)$, then there exist a smooth function
$F$ on $U$ and a smooth one-form $A$ on $\widetilde Y$ such that
\[
 A|_U=\zeta-dF,\qquad dA=K.
\]

Choose finitely many SNC coordinate polydiscs
$V_\nu\simeq\Delta^m$ covering $E$, with
\[
 E\cap V_\nu=\{z_1\cdots z_{q_\nu}=0\}.
\]
By the Poincar\'e lemma, choose
a smooth real one-form $b_\nu$ on $V_\nu$ with
$db_\nu=K|_{V_\nu}$.
Then $\zeta-b_\nu$ is closed on $V_\nu\setminus E$.  Moreover,
\[
 H_1(V_\nu\setminus E,\mathbb Z)
 \simeq H_1((\Delta^*)^{q_\nu},\mathbb Z)
 \simeq\mathbb Z^{q_\nu},
\]
where $\Delta^*=\Delta\setminus\{0\}$, and the generators are the
$q_\nu$ loops obtained by rotating one of
$z_1,\ldots,z_{q_\nu}$ once while keeping the other coordinates
fixed and nonzero.  Each generator is a meridian around one local
branch of $E$, so its period against $\zeta-b_\nu$ is the
corresponding residue and is zero.

Fix $p_\nu\in V_\nu\setminus E$.  For
$y\in V_\nu\setminus E$, choose a piecewise smooth path
$\gamma_y$ from $p_\nu$ to $y$ and set
\[
 f_\nu(y)=\int_{\gamma_y}(\zeta-b_\nu).
\]
Every closed loop is homologous to an integral combination of the
coordinate circles.  Since $\zeta-b_\nu$ is closed and all these
periods vanish, $f_\nu(y)$ is independent of the chosen path.
The usual differentiation of a path integral shows that
$f_\nu\in C^\infty(V_\nu\setminus E)$ and
\[
 \zeta-b_\nu=df_\nu.
\]

Choose smooth functions $\rho_\nu$ on $\widetilde Y$, each having support
contained in a compact subset of $V_\nu$, such that
$\sum_\nu\rho_\nu=1$ on an open neighborhood $W$ of $E$.
Since $\rho_\nu$ vanishes near $\partial V_\nu$, the function
$\rho_\nu f_\nu$ extends by zero to a smooth function on $U$.
Define
\[
 F=\sum_\nu\rho_\nu f_\nu\quad\text{on }U,
 \qquad A_U=\zeta-dF.
\]
On $W\cap U$, using $\sum_\nu\rho_\nu=1$ and
$df_\nu=\zeta-b_\nu$, we obtain the exact identity
\[
 A_U=\sum_\nu\rho_\nu b_\nu-\sum_\nu f_\nu d\rho_\nu.
\]

It remains to prove that the last sum extends smoothly.  Fix
$x\in E$, and let
\[
 I_x=\{\nu:x\in\operatorname{supp}\rho_\nu\}.
\]
Here $\operatorname{supp}\rho_\nu$ is the closure of
$\{\rho_\nu\ne0\}$.
This finite set is nonempty.  Choose $\nu_0\in I_x$.  Since the
supports are closed and contained in the corresponding $V_\nu$, we
can choose an SNC coordinate polydisc $P_x$ centered at $x$ such
that
\[
 P_x\subset W\cap\bigcap_{\nu\in I_x}V_\nu,\qquad
 P_x\cap\operatorname{supp}\rho_\nu=\varnothing
 \quad\text{for }\nu\notin I_x.
\]
On $P_x\cap U$, the equality $\sum_\nu d\rho_\nu=0$ gives
\[
 \sum_\nu f_\nu d\rho_\nu
 =\sum_{\nu\in I_x}(f_\nu-f_{\nu_0})d\rho_\nu.
\]
For $\nu\in I_x$,
\[
 d(f_\nu-f_{\nu_0})=b_{\nu_0}-b_\nu.
\]
The right-hand side is a smooth closed one-form on $P_x$.  Since
$P_x$ is contractible, there is
$g_\nu\in C^\infty(P_x)$ such that
$dg_\nu=b_{\nu_0}-b_\nu$.  Hence
\[
 d(f_\nu-f_{\nu_0}-g_\nu)=0
 \quad\text{on }P_x\setminus E.
\]
The set $P_x\setminus E\simeq
(\Delta^*)^q\times\Delta^{m-q}$ is connected.  Consequently
$f_\nu-f_{\nu_0}-g_\nu$ is constant there, and
$f_\nu-f_{\nu_0}$ extends smoothly to $P_x$.  Thus
$\sum_\nu f_\nu d\rho_\nu$, and therefore $A_U$, extends smoothly
across $x$.  Since $x$ was arbitrary, these local extensions agree
on overlaps (they all restrict to $A_U$ on a dense open subset) and
give a smooth one-form $A$ on $\widetilde Y$. Since $dA=K$ on the
dense open set $U$, the equality holds everywhere by smoothness. This
proves the assertion.

Apply this assertion to $K=H_0$ and $\zeta=\xi_0$.  We obtain a
smooth one-form $A$ on $\widetilde Y$ with $dA=H_0$. Finally, the
Poincar\'e--Lelong identity gives
\[
 H=\sum_jc_j\theta_j+dA
  =\sum_jc_j[E_j]
   +d\left(A-\sum_jc_j\lambda_j\right).
\]
Since every $\lambda_j$ is locally integrable, the expression in
parentheses is a degree-one current $R$.

\end{proof}

\AtNextBibliography{\small}
\begingroup
\setlength\bibitemsep{2pt}
\setlength{\emergencystretch}{2em}
\printbibliography
\endgroup

\end{document}